%%%
% !LW recipe=latexmk (pdflatex)
% \documentclass[draft]{ujarticle}
% \documentclass[uplatex,ja=standard,12pt]{bxjsarticle}
% \documentclass[dvipdfmx]{bxjsarticle}
\documentclass[a4]{article}
\usepackage{CJKutf8}
\usepackage[utf8]{inputenc}
\usepackage[T1]{fontenc}
\usepackage{lmodern}
\usepackage{microtype}
\usepackage{amsmath}
\usepackage{amssymb}
\usepackage{amsthm}
\usepackage{mathtools}
\usepackage{bm}
\usepackage{cases}
\usepackage{cite}
\usepackage{graphicx} % draft: ignore figures for faster compilation
\usepackage{tikz}
\usepackage[
  left=25mm,
  right=25mm,
  top=25mm,bottom=25mm,
  marginparwidth=30mm,
  marginparsep=3mm
]{geometry}
\usepackage{setspace}
\setstretch{1.1}
\usepackage{accents}
\usepackage{comment}
\usepackage{algorithm}
\usepackage{algpseudocode}
\usepackage{subcaption}
\usepackage{url}
\usepackage[
  colorlinks=true,
  linkcolor=blue,
  citecolor=green!50!black,
  urlcolor=cyan,
  bookmarks=true,
  bookmarksnumbered=true,
  pdfusetitle
]{hyperref}
\usepackage{multirow}
\usepackage{colortbl}
\usepackage{xcolor}
\usepackage[normalem]{ulem}  % For latexdiff-style \sout{}

\usepackage{pgfplots}
\pgfplotsset{compat=1.18}

\newtheorem{theorem}{Theorem}[section]
\newtheorem{lemma}[theorem]{Lemma}
\newtheorem{corollary}[theorem]{Corollary}

\newtheorem{definition}[theorem]{Definition}
\newtheorem{construction}[theorem]{Construction}
\newtheorem{problem}[theorem]{Problem}
\newtheorem{remark}[theorem]{Remark}
\newtheorem{assumption}[theorem]{Assumption}

\title{A Green's Function–Based Enclosure Framework for Poisson's Equation and Generalized Sub- and Super-Solutions}

\newcommand{\fundsol}{\Gamma}
\newcommand{\belem}{\gamma}
\newcommand{\harmonic}{H_{\intpt}}
\newcommand{\intpt}{s_{\rm int}}
\newcommand{\intcoef}{a_{\rm int}}
\newcommand{\testFuncMap}{\Phi}
\newcommand{\testFuncSet}{\mathcal{T}_{\intpt}}

\newcommand{\del}[1]{}

\newif\ifshowmemo
\showmemofalse

\ifshowmemo
    \setlength{\marginparwidth}{2cm}
    \usepackage[color=orange!10]{todonotes}
    \setuptodonotes{inline}
    \let\oldtodo\todo
    \renewcommand{\todo}[1]{\oldtodo{\begin{CJK}{UTF8}{maru} #1 \end{CJK}}}
\else
    \usepackage[disable]{todonotes}
\fi

\author{
Kazuaki Tanaka\textsuperscript{1,*} \and
Ryoga Iwanami\textsuperscript{2} \and
Kaname Matsue\textsuperscript{3,4} \and
Hiroyuki Ochiai\textsuperscript{3}
}
\date{%
\textsuperscript{1}Global Center for Science and Engineering, Waseda University, 3-4-1 Okubo, Shinjuku-ku, Tokyo 169-8555, Japan\\
\textsuperscript{2}Graduate School of Fundamental Science and Engineering, Waseda University, 3-4-1 Okubo, Shinjuku-ku, Tokyo 169-8555, Japan\\
\textsuperscript{3}Institute of Mathematics for Industry, Kyushu University, 744 Motooka, Nishi-ku, Fukuoka 819-0395, Japan\\
\textsuperscript{4}International Institute for Carbon-Neutral Energy Research (WPI-I$^2$CNER), Kyushu University, 744 Motooka, Nishi-ku, Fukuoka 819-0395, Japan\\[1em]
\textsuperscript{*}Corresponding author. E-mail: tanaka@ims.sci.waseda.ac.jp
}

\begin{document}
\maketitle

\begin{abstract}
This paper presents a novel framework for enclosing solutions of Poisson's equation based on generalized sub- and super-solutions constructed using fundamental solutions. The conventional definition of sub- and super-solutions based on variational inequalities often fails for natural function classes such as piecewise linear functions and encounters theoretical difficulties in non-convex polygonal domains, where $H^2$ regularity is lost because of corner singularities. To overcome these limitations, we introduce the concept of ``Green-representable solutions'' utilizing test functions constructed from fundamental solutions. This framework enables a new formulation of sub- and super-solutions that permits rigorous pointwise evaluation. For one-dimensional problems, we derive explicit constructions of the test functions. For two-dimensional polygonal domains, we employ the Method of Fundamental Solutions to generate test functions. The approach is validated through numerical experiments in both settings, including non-convex polygons. The results demonstrate that the proposed method yields strict and accurate pointwise enclosures of the true solution, even for problems with discontinuous source terms or geometric singularities.
\end{abstract}
\vspace{1em}
\noindent
\textbf{Keywords:} Poisson's equation, Sub- and Super-solutions, Green's function, Solution enclosure, Pointwise evaluation, Corner singularities

\section{Introduction}\label{sec:intro}
The Poisson equation
\begin{equation}
    \label{eq:poisson-strong}
    \begin{cases}
    -\Delta u = f & \text{in }\Omega,\\
     u=0 & \text{on }\partial\Omega
    \end{cases}
\end{equation}
has been one of the most fundamental and universal partial differential equations since its formulation by Poisson in the early 1800s, appearing across an extraordinarily broad range of fields, from the natural sciences to engineering and social infrastructure.
These applications include Maxwell's equations in electrostatics \cite{jackson1999classical}, pressure Poisson equations in fluid mechanics \cite{landau1987fluid}, stress function methods in structural mechanics \cite{timoshenko1970theory}, steady-state problems in heat conduction \cite{carslaw1959conduction}, time-independent Schrödinger equations in quantum mechanics \cite{landau1977quantum}, determination of galactic gravitational potential fields in astrophysics \cite{binney2008galactic}, gradient domain processing in image processing \cite{perez2003poisson}, and potential field analysis of steady aquifer flow in groundwater analysis \cite{bear1972dynamics}.

From a theoretical perspective, as demonstrated in the classical textbook by Gilbarg \& Trudinger \cite{gilbarg2001elliptic}, this equation serves as a cornerstone of elliptic partial differential equation theory and has driven the development of modern analysis through foundational results such as the maximum principle, Harnack's inequality \cite{harnack1887grundlagen}, and Schauder estimates \cite{schauder1934uber}. Combined with Sobolev space theory systematized in Brezis's variational theory \cite{brezis2011functional}, it provides essential guidance for the analysis of more complex nonlinear problems. In computational science, the Poisson equation also plays a central role in the development of discretization methods and error analysis, serving as the theoretical foundation of finite element methods originating from Courant's variational approach \cite{courant1943variational}, as presented in Ciarlet's standard textbook \cite{ciarlet2002finite} and the modern theory of Brenner \& Scott \cite{brenner2008mathematical}. These developments form the basis for ensuring reliability and accuracy guarantees in engineering computations.

However, under complex geometric conditions such as polygonal domains with non-convex corners, including L-shaped domains and interfaces between different materials, the $H^2$ regularity of solutions is lost, and singularities arise, as established in Grisvard's theory \cite{grisvard2011elliptic} and Kondratiev's corner singularity analysis \cite{kondratiev1967boundary}. Because solution behavior depends sensitively on corner singular exponents, as clarified by Dauge's detailed analysis \cite{dauge1988elliptic}, standard finite element error analysis becomes difficult to apply.
This lack of regularity is closely related to practically significant phenomena, including stress concentration at aircraft structural corners, electric field enhancement at semiconductor device junctions, and discontinuities of physical quantities at composite material interfaces. The precise analysis of these effects remains challenging from both theoretical and computational perspectives \cite{seo2024sobolev}.

Regarding accuracy guarantees and error evaluation for solutions of Poisson equations, numerous studies have been actively pursued to date. In particular, efforts to rigorously verify the existence of solutions and to quantitatively evaluate their errors occupy an important position within the field of verified numerical computation. As a pioneering contribution in this area, the verification framework combining finite-dimensional projection with infinite-dimensional remainder evaluation proposed by Nakao et al. \cite{nakao1988numerical, nakao1990numerical} can be cited. This approach decomposes the solution into an approximate component obtained by finite element approximation and a rigorously evaluable remainder term, thereby guaranteeing the existence and uniqueness range of solutions and establishing a foundation for constructive error evaluation in elliptic boundary value problems.
In parallel, Plum addressed solution verification from a different perspective during the same period, developing methods to bound the inverse of linearized operators through rigorous eigenvalue evaluation of associated differential operators \cite{plum1991bounds}.
Yamamoto and Nakao further extended the framework of Nakao et al. to non-convex polygonal domains with reentrant corners, developing verification methods that explicitly account for corner singularities \cite{yamamoto1993numerical}. Additionally, Liu and Oishi proposed rigorous error evaluation methods for Poisson equations that underpin eigenvalue evaluation of Laplacians on arbitrarily shaped polygonal domains by constructing error constants $M_h$ based on interpolation error constants $C_0, C_1$, within a finite element framework \cite{liu2013verified}. This constructive error evaluation strategy has also been applied to the verification of semi-linear elliptic problems on arbitrary polygonal domains by Takayasu et al. \cite{takayasu2013verified}. Furthermore, Kobayashi proposed a method achieving more precise \textit{a priori} error evaluation by augmenting finite element bases with singular functions that explicitly represent solution singularities in non-convex domains \cite{kobayashi2009constructive}.

On the other hand, in the field of guaranteed a posteriori error estimation, substantial theoretical progress has been achieved through hypercircle methods based on the Prager-Synge theorem \cite{prager1947hypercircle} and through equilibrated flux reconstruction techniques. In particular, Vohralík et al. developed equilibrated flux methods that provide fully computable error upper bounds without mesh-dependent constants by employing H(div)-conforming flux reconstruction \cite{vohralik2008residual}, and subsequently opened pathways to practical adaptive mesh refinement through robust error estimation independent of polynomial degree \cite{ern2015polynomial}. Verfürth's comprehensive monograph \cite{verfurth1996review} offers a systematic classification of residual-based, recovery-based, and equilibrium residual methods, while Carstensen established a unified framework for error estimation across different finite element methods \cite{carstensen2005unifying}. More recently, Nakano and Liu proposed guaranteed local error evaluation methods applicable to subdomains even when solutions lack $H^2$ regularity, by applying hypercircle methods \cite{nakano2023guaranteed}. This approach is noteworthy for enabling direct and rigorous evaluation of solution accuracy in specific regions of interest. The growing interest in such pointwise and local evaluations underscores the relevance of the approach proposed in this paper, which is based on solution representation using fundamental solutions and rigorous enclosure at each point.
The studies reviewed above have primarily contributed to rigorous error evaluation in $L^2$ norms or energy norms, including $H^1$ norms, achieving important advances, particularly in solution existence guarantees and quantitative error evaluation within finite element frameworks.

Meanwhile, research has also focused on achieving precise evaluation in stronger norms, capturing solution behavior at specific points, and assessing solution accuracy within subdomains. Plum developed pointwise bounding methods to estimate solution values at each point based on explicit $H^2$ evaluation for second-order elliptic boundary value problems \cite{plum1992explicit}. Additionally, Nakao and Yamamoto proposed verification methods for nonlinear elliptic problems based on residual evaluation in $L^\infty$ norms. However, these methods rely on the assumption of $H^2$ regularity of solutions in their theoretical construction \cite{nakao1998numerical_linf}.

As a tool for enclosing solutions of Poisson equations from above and below, we focus on sub- and super-solutions. A conventional definition of super-solutions $\overline{u}$ is given by
\begin{equation}
    \label{eq:classical_super}
     (\nabla \overline{u},\nabla v) \ge (f,v) \quad \text{for all } v\in H^1_0(\Omega)\cap L^2_+(\Omega).
\end{equation}
Sub-solutions $\underline{u}$ are defined analogously with the inequality reversed \cite{evans2020partial}.
However, this variational-based definition of sub- and super-solutions may fail, from a theoretical standpoint, even for natural function classes such as piecewise linear functions. To illustrate this limitation explicitly, we consider the following problem.

\begin{problem}
Let $\Omega=(0,1)\subset\mathbb{R}$ and let $\overline{u}\in H^1(\Omega)$ be piecewise linear on $\Omega$. For $f\in L^2(\Omega)$, is there a general method to verify the following inequality?
\[ (\overline{u}',\phi') \ge (f,\phi) \quad \text{for all } \phi\in H^1_0(\Omega)\cap L^2_+(\Omega). \]
\end{problem}

As a negative answer to this question, when $f=1$, consider
\begin{equation}\label{eq:counterexample_supersol}
\overline{u}(x)=\begin{cases}
2ax & x\in[0,1/2],\\
2a(1-x) & x\in[1/2,1].
\end{cases}
\end{equation}
Using $\phi(x)=-x(x-0.5)^2(x-1)$, we obtain $(\overline{u}',\phi')=0<(f,\phi)$, showing that the super-solution condition is not satisfied for any $a\ge1/4$.
Figure \ref{fig:counterexam} illustrates this situation.
Even with increased subdivision points, similar counterexamples can be constructed by ensuring that $\phi$ vanishes at the subdivision points. Thus, even simple one-dimensional piecewise linear functions fail to satisfy the conventional super-solution conditions.
\begin{figure}
    \centering
    \includegraphics[width=0.6\linewidth]{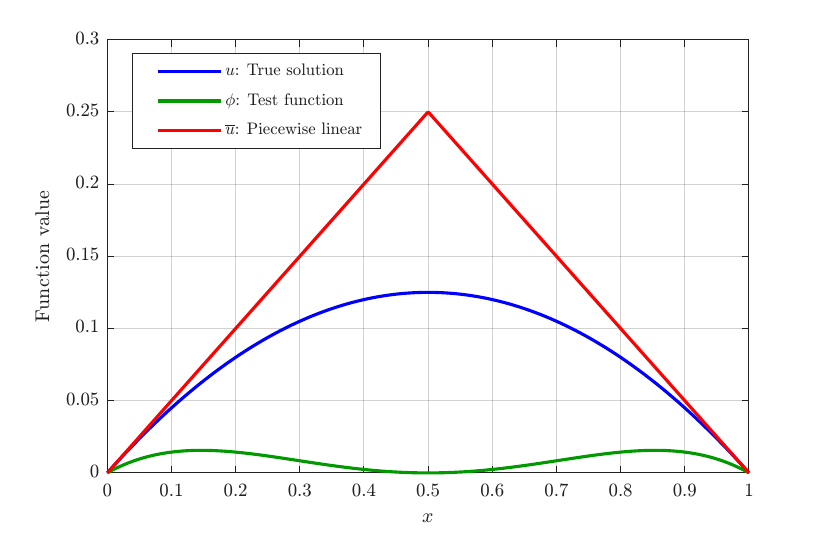}
    \caption{Linear functions do not satisfy the super-solution condition. Blue: true solution of \eqref{eq:poisson-strong} when $f=1$; Green: $\phi(x)=-x(x-0.5)^2(x-1)$; Red: piecewise linear function \eqref{eq:counterexample_supersol}}
    \label{fig:counterexam}
\end{figure}
This observation provides strong motivation for the present research, namely, "to generalize the concept of sub- and super-solutions and to reformulate them in a more tractable and verifiable form."

To overcome the limitations of the classical framework described above, particularly those demonstrated by the counterexamples, this paper introduces a new solution representation called ``Green-representable solutions,'' based on test functions constructed using fundamental solutions from a weak solution perspective. The main problem addressed in this study is to find $u\in H^1_0(\Omega)$ satisfying the weak formulation
\begin{equation}
    \label{eq:main}
    \int_{\Omega} \nabla u\cdot\nabla v\,dx = \int_{\Omega} f\,v\,dx \quad\text{for all } v\in H^1_0(\Omega).
\end{equation}
The regularity conditions imposed on $f$ in Section \ref{subsec:fundasol-integrability}. By developing the concept of Green-representable solutions, it becomes possible to formulate new types of sub- and super-solutions that are more direct and verifiable, thereby replacing conventional variational inequalities. This objective constitutes the central contribution of the present research. Through this new framework, the difficulties inherent in constructing conventional sub- and super-solutions via variational inequalities can be systematically addressed.

In constructing the proposed new framework for sub- and super-solutions, another important property of Green-representable solutions is that rigorous pointwise evaluation of solutions naturally emerges. This pointwise evaluation is an important byproduct identified during the pursuit of the main objective of generalizing sub- and super-solutions, and it is itself highly useful for precisely capturing local properties of solutions.
As related research on local error evaluation, Nakano and Liu \cite{nakano2023guaranteed} proposed guaranteed local error evaluation for finite element solutions based on hypercircle theory, demonstrating applicability to problems that do not assume $H^2$ regularity. While their method provides evaluation in $L^2$ norms or energy norms, the present research differs in its ability to achieve rigorous evaluation of $L^\infty$ norms and pointwise values through direct solution representation based on fundamental solutions.

The main contributions of this paper are as follows:
\begin{enumerate}
    \item Introduction of the concept of "Green-representable solutions" based on fundamental solutions and formulation of new sub- and super-solutions using them
    \item Construction and verification of sub- and super-solutions for broader function classes, including piecewise linear functions that cannot be handled by conventional sub- and super-solution definitions
    \item Presentation of rigorous pointwise evaluation methods that arise naturally from the proposed framework
    \item Extension of the theory to two-dimensional polygonal domains that may contain corner points, together with solution evaluation in such domains
    \item Practical construction of test functions using the Method of Fundamental Solutions (MFS)
\end{enumerate}
Furthermore, by employing the MFS to construct test functions, this work offers a novel perspective on MFS performance. The accuracy of the resulting solution enclosures provides an indirect yet quantitative measure of MFS approximation quality, highlighting a new application of MFS in verified numerical computation and motivating further research on its optimization for high-precision error estimation.

This paper is organized as follows. First, in Section \ref{sec:rep-green}, we establish a solution representation theory based on fundamental solutions. We introduce the concepts of local and global Green-representability and construct a theoretical framework for Lipschitz domains of arbitrary dimension.
 While this theory holds for general dimensions, special results regarding solution regularity and boundary condition treatment are obtained in one- and two-dimensional settings.
 In Section \ref{sec:onedim}, we develop a dedicated theoretical framework for one-dimensional problems, and in Section \ref{sec:onedim}, we verify the effectiveness of the proposed method through numerical experiments in one dimension. Section \ref{sec:twodim} extends the framework to two-dimensional problems, particularly detailing solution evaluation in non-convex polygonal domains. Numerical experiments are presented for constant, polynomial, and non-polynomial source terms in square and L-shaped domains. Finally, Section \ref{sec:conclusion} provides comparisons with related research and discusses future challenges.

\section{Solution Representation via fundamental Solutions}\label{sec:rep-green}
In this section, we establish a framework for representing solutions of the boundary value problem \eqref{eq:main} by means of fundamental solutions. 
The theory developed here is general and applies to Lipschitz domains in arbitrary dimensions. However, as shown later, the one- and two-dimensional cases yield particularly elegant results owing to their special properties related to solution regularity.
We begin in Subsection~\ref{subsec:fundasol-integrability} by introducing the necessary notation and integrability conditions for fundamental solutions.
Subsection~\ref{subsec:testfunctions} describes the construction of test functions that serve as representation kernels.
In Subsections~\ref{subsec:local-Green} and~\ref{subsec:global-Green}, we introduce two key concepts---local and global Green-representability---which together provide a unified framework for analyzing solutions across arbitrary dimensions, with particular emphasis on their implications in one and two dimensions.

\subsection{Fundamental solutions and integrability conditions}\label{subsec:fundasol-integrability}
We now introduce the framework in terms of fundamental solutions. Let $\Omega$ be a bounded Lipschitz domain in $\mathbb{R}^N$. For the construction of test functions, we consider the following components:
\begin{itemize}
\item A fundamental solution $\fundsol(s,x)$ satisfying $-\Delta\fundsol(s,x)=\delta(x-s)$, where $\delta$ denotes the Dirac delta function. The explicit form depends on the spatial dimension
\begin{equation}
\fundsol(s,x) = \begin{cases}
-\frac{|x-s|}{2} & \text{for } N=1 \\
-\frac{1}{2\pi}\log|x-s| & \text{for } N=2 \\
\frac{1}{(N-2)\omega_N|x-s|^{N-2}} & \text{for } N\geq 3
\end{cases}
\end{equation}
where $\omega_N$ denotes the surface area of the unit sphere in $\mathbb{R}^N$.
\item An interior point $\intpt \in \Omega$ at which the solution is to be evaluated.
\item A finite set of exterior points $s_i \in \mathbb{R}^N \setminus \overline{\Omega}$ $(i=1,\ldots,n)$, which are used to construct an approximation of the fundamental solution.
\end{itemize}

For the well-posedness of the proposed framework, it is necessary to ensure that certain pairings between $f$ and fundamental solutions are finite. We therefore identify the minimal regularity requirements imposed on $f$, depending on the spatial dimension.

\begin{assumption}[Integrability of $f$]\label{assum:f-integrability}
Throughout this paper, we assume that $f$ satisfies:
\begin{itemize}
\item For $N=1$: $f \in L^1(\Omega)$
\item For $N \geq 2$: $f \in L^p(\Omega)$ for some $p > N/2$
\end{itemize}
\end{assumption}

For a function $f$ these integrability conditions and $\fundsol_{\intpt}(x):=\fundsol(\intpt,x)$, we define the coupling
\begin{equation}
\langle f,\fundsol_{\intpt} \rangle := \int_\Omega f(x)\fundsol_{\intpt}(x)dx,
\label{eq:coupling}
\end{equation}
whenever the integral exists.

\begin{lemma}[Integrability of the pairing]\label{lem:integrability}
Under Assumption \ref{assum:f-integrability}, the pairing $\langle f,\fundsol_{\intpt} \rangle$ defined in \eqref{eq:coupling} is well-defined and finite for any $\intpt \in \Omega$.
\end{lemma}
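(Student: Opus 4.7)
The plan is to verify the claim by separating cases according to the spatial dimension, since the singular behavior of $\fundsol_{\intpt}$ near $x=\intpt$ depends on $N$. In each case I will bound the pairing by an appropriate Hölder-type estimate between $f$ and $\fundsol_{\intpt}$, where the assumed integrability exponent on $f$ is precisely tuned so that $\fundsol_{\intpt}$ lies in the dual Lebesgue space.

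For $N=1$, I would observe that $\fundsol_{\intpt}(x)=-|x-\intpt|/2$ is continuous and bounded on the bounded interval $\Omega$, with $\|\fundsol_{\intpt}\|_{L^\infty(\Omega)}\le \operatorname{diam}(\Omega)/2$. The trivial estimate $|\langle f,\fundsol_{\intpt}\rangle|\le \|\fundsol_{\intpt}\|_{L^\infty(\Omega)}\|f\|_{L^1(\Omega)}$ then yields finiteness under Assumption~\ref{assum:f-integrability}.

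For $N\ge 2$, I would apply Hölder's inequality with the conjugate exponent $q=p/(p-1)$, reducing the problem to showing $\fundsol_{\intpt}\in L^q(\Omega)$. Since $\Omega$ is bounded, it suffices to check integrability locally near the singularity $x=\intpt$ inside a ball $B_R(\intpt)$ with $R=\operatorname{diam}(\Omega)$. For $N=2$, the pointwise bound $|\fundsol_{\intpt}(x)|\le C(1+|\log|x-\intpt||)$ together with the fact that $|\log r|^q$ is integrable on $(0,R)$ against the measure $r\,dr$ for every finite $q$ gives $\fundsol_{\intpt}\in L^q(\Omega)$ for all $q<\infty$, covering any $q=p/(p-1)$ with $p>1=N/2$. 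For $N\ge 3$, I would pass to polar coordinates and compute
\begin{equation*}
\int_{B_R(\intpt)} \frac{dx}{|x-\intpt|^{q(N-2)}} = \omega_N \int_0^R r^{N-1-q(N-2)}\,dr,
\end{equation*}
which is finite if and only if $q(N-2)<N$, i.e.\ $q<N/(N-2)$.

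The only point requiring care is verifying that this last condition is exactly what the assumption $p>N/2$ provides: solving $p/(p-1)<N/(N-2)$ reduces, after cross-multiplying, to $p>N/2$, so the two conditions are equivalent. This sharp matching between the hypothesis and the dimensional scaling of the fundamental solution is the only mildly delicate step; everything else is routine. Hölder's inequality then gives
\begin{equation*}
|\langle f,\fundsol_{\intpt}\rangle| \le \|f\|_{L^p(\Omega)}\,\|\fundsol_{\intpt}\|_{L^q(\Omega)} < \infty,
\end{equation*}
which completes the proof in all dimensions.
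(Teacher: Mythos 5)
Your proposal is correct and follows essentially the same route as the paper: boundedness of the fundamental solution for $N=1$, and Hölder's inequality with the conjugate exponent plus a polar-coordinate check of the local integrability of $\fundsol_{\intpt}$ for $N\ge 2$, with the exponent condition reducing exactly to $p>N/2$. Your separate treatment of the logarithmic case $N=2$ is in fact slightly more careful than the paper's single exponent computation, but the argument is the same in substance.
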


\begin{proof}
For $N=1$, this property is readily verified since the fundamental solution belongs to $L^\infty(\Omega)$.
For $N \geq 2$, the condition follows from Hölder's inequality
\begin{equation}
|\langle f,\fundsol_{\intpt} \rangle | \leq \|f\|_{L^p} \|\fundsol_{\intpt}\|_{L^{p'}}
\end{equation}
where $\frac{1}{p} + \frac{1}{p'} = 1$.
Examining the boundedness of the latter norm, we obtain
$N-1-(N-2)p' > -1$, which yields $p > \frac{N}{2}$.
\end{proof}

When $f$ satisfies this regularity condition, the solution of \eqref{eq:main} is locally continuous in the interior of $\Omega$.
Consequently, $u(s)$ is well-defined for any $s \in \Omega$.
In the one-dimensional case, this follows from the Sobolev embedding $H^1_0(\Omega) \hookrightarrow C^0(\Omega)$; see \cite[Theorem 4.12]{adams2003sobolev} for details.
For dimensions $N \geq 2$, when $p > N/2$, we have $u \in W^{2,p}_{\mathrm{loc}}(\Omega) \hookrightarrow C^{0,\alpha}_{\mathrm{loc}}(\Omega)$ with $\alpha = 2-\frac{N}{p}>0$, which ensures the desired continuity. A comprehensive treatment of these embeddings can be found in \cite[Theorem 7.26]{gilbarg2001elliptic}.
These regularity conditions are stronger than the requirement $f \in H^{-1}(\Omega)$ arising from the Lax-Milgram theorem for the weak formulation of \eqref{eq:main}, where $H^{-1}(\Omega)$ denotes the dual space of $H^1_0(\Omega)$. However, for dimensions $N \leq 3$, they are satisfied by the commonly assumed condition $f \in L^2(\Omega)$, indicating that the imposed requirements are not overly restrictive for most practical applications.

\subsection{Construction of test functions}\label{subsec:testfunctions}
For the construction of test functions, we introduce the following general form.
\begin{construction}[Test function construction]\label{con:test-function}
For an evaluation point $\intpt \in \Omega$ and a non-zero real number $\intcoef$ (which may depend on $\intpt$), we define a test function by:
\begin{equation}
\phi_{\intpt}(x):=\intcoef\fundsol(\intpt,x)+\harmonic(x)\label{eq:test-function}
\end{equation}
where $\harmonic$ depends on $\intpt$ and is harmonic and of class $C^2$ in a domain $U$ containing $\overline{\Omega}$ and a neighborhood. That is, there exists an open set $U \supset \overline{\Omega}$ such that $\harmonic \in C^2(U)$ and $\Delta \harmonic = 0$ in $U$.
\end{construction}
We denote the set of all such test functions by $\testFuncSet(\Omega)$. We omit $(\Omega)$ and simply write $\testFuncSet$ when there is no risk of confusion.
For practical implementation, we specify $\harmonic(x)$ as
\begin{equation}
\harmonic(x)=\sum_{i=1}^{n}a_{i}\fundsol(s_i,x)+C
\label{eq:harmonic-part}
\end{equation}
where source points $\{s_i\}_{i=1}^n \subset \mathbb{R}^N \setminus \overline{\Omega}$, coefficients $a_i \in \mathbb{R}$ $(i=1,\ldots,n)$, and constant $C \in \mathbb{R}$ are chosen to approximate the homogeneous Dirichlet boundary condition in \eqref{eq:main} as closely as possible.
Note that $\harmonic$ and $\phi_{\intpt}$ depend not only on $\intpt$ but also on the parameters $a_i$, $C$, and $s_i$. In this paper, to avoid notational complexity, we retain the notations $\harmonic$ and $\phi_{\intpt}$ while explicitly indicating only the primary parameter $\intpt$.

Having established the construction of test functions, we now turn to a key coupling that plays a central role in the subsequent analysis.
For $u \in H^1(\Omega)$ and any $\phi_{\intpt} \in \testFuncSet$, we define the coupling by
\begin{equation}
\label{eq:coupling_def}
\left\langle\nabla u,\nabla\phi_{\intpt}\right\rangle:=-\int_{\Omega}u\Delta\phi_{\intpt}dx+\int_{\partial\Omega}u\frac{\partial\phi_{\intpt}}{\partial n}d\belem=\intcoef u(\intpt)+\int_{\partial\Omega}u\frac{\partial\phi_{s}}{\partial n}d\belem,
\end{equation}
where $\Delta\phi_{\intpt}$ is interpreted in the distributional sense as $-\Delta \phi_{\intpt} = \intcoef\delta(x-\intpt)$.
The following lemma is fundamental to the proposed framework, as it allows the above coupling to be interpreted as a standard Lebesgue integral.

\begin{lemma}[Interpretation of the coupling as a Lebesgue integral]\label{lem:coupling_interpretation}
For $N \geq 2$ and $f \in L^p(\Omega)$ with $p > N/2$, let $u \in H^1_0(\Omega)$ be a solution of \eqref{eq:main}.
Suppose that $\phi_{\intpt} \in \testFuncSet$ is a test function with a fixed interior point $\intpt \in \Omega$.
Then, the coupling $\langle \nabla u, \nabla \phi_{\intpt} \rangle$ admits the Lebesgue integral representation
\begin{equation}
\label{eq:nabla_u_phi_coupling}
\langle \nabla u, \nabla \phi_{\intpt} \rangle = \int_{\Omega} \nabla u \cdot \nabla \phi_{\intpt} \, dx.
\end{equation}
This representation is equivalent to \eqref{eq:coupling_def} via Green's first identity.
\end{lemma}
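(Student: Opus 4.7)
The plan is to establish the equality by a classical excision argument: I would replace $\Omega$ by $\Omega_\epsilon := \Omega \setminus \overline{B_\epsilon(\intpt)}$ for small $\epsilon > 0$, apply the standard Green's first identity on this domain (where $\phi_{\intpt}$ is smooth and harmonic), and then let $\epsilon \to 0^+$. Two nontrivial ingredients must be verified: (i) Lebesgue integrability of $\nabla u \cdot \nabla \phi_{\intpt}$ on all of $\Omega$ despite the $O(|x-\intpt|^{-(N-1)})$ blow-up of $\nabla \phi_{\intpt}$ near $\intpt$, and (ii) concentration of the inner boundary integral over $\partial B_\epsilon(\intpt)$ to $\intcoef u(\intpt)$.

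For (i), I would decompose $\phi_{\intpt} = \intcoef \fundsol_{\intpt} + \harmonic$. The $\harmonic$-contribution is harmless since $\nabla \harmonic \in L^\infty(\Omega)$ and $\nabla u \in L^2(\Omega)$. For the singular part, I would invoke the elliptic regularity already noted in Section~\ref{subsec:fundasol-integrability}: the hypothesis $f \in L^p(\Omega)$ with $p > N/2$ yields $u \in W^{2,p}_{\mathrm{loc}}(\Omega)$, and Sobolev embedding then places each component of $\nabla u$ in $L^{q'}_{\mathrm{loc}}(\Omega)$ for some $q' > N$. Since $|\nabla \fundsol_{\intpt}| \lesssim |x-\intpt|^{-(N-1)}$ belongs to $L^q(B_\delta(\intpt))$ for every $q < N/(N-1)$, the Hölder dual condition $1/q + 1/q' = 1$ is attainable, giving $\nabla u \cdot \nabla \fundsol_{\intpt} \in L^1(B_\delta(\intpt))$; outside the ball $\fundsol_{\intpt}$ is smooth and integrability is trivial.

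Applying Green's first identity on $\Omega_\epsilon$, where $\phi_{\intpt} \in C^\infty(\overline{\Omega_\epsilon})$ and $\Delta \phi_{\intpt} = 0$ in the classical sense, I would obtain
\[
\int_{\Omega_\epsilon} \nabla u \cdot \nabla \phi_{\intpt}\, dx = \int_{\partial \Omega} u\, \partial_n \phi_{\intpt}\, d\belem - \int_{\partial B_\epsilon(\intpt)} u\, \partial_r \phi_{\intpt}\, d\belem,
\]
where the minus sign in the inner term reflects that the outer normal of $\Omega_\epsilon$ along $\partial B_\epsilon(\intpt)$ points toward $\intpt$. The $\partial \Omega$ integral vanishes because $u \in H^1_0(\Omega)$ has zero trace. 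Using $\partial_r \fundsol_{\intpt}|_{r=\epsilon} = -1/(\omega_N \epsilon^{N-1}) = -1/|\partial B_\epsilon(\intpt)|$, the $\fundsol_{\intpt}$-part of the inner integral reduces to the surface average $\intcoef \,|\partial B_\epsilon(\intpt)|^{-1} \int_{\partial B_\epsilon(\intpt)} u\, d\belem$, which tends to $\intcoef u(\intpt)$ by continuity of $u$ at $\intpt$; the $\harmonic$-part is bounded by $\|\partial_r \harmonic\|_{L^\infty} \cdot \sup_{B_\epsilon} |u| \cdot |\partial B_\epsilon(\intpt)| \to 0$. Combined with $\int_{\Omega_\epsilon} \to \int_\Omega$ (dominated convergence using the integrability from step (i)), this produces $\int_\Omega \nabla u \cdot \nabla \phi_{\intpt}\, dx = \intcoef u(\intpt)$, which matches the definition \eqref{eq:coupling_def} after eliminating its boundary term by the same zero-trace property. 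The main obstacle is step (i): the threshold $p > N/2$ is sharp precisely because it matches the local $L^q$-behavior of $\nabla \fundsol_{\intpt}$, so the proof lives at this delicate dimensional balance.
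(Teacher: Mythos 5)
Your proposal is correct, and its first half is the paper's argument almost verbatim: the integrability of $\nabla u \cdot \nabla \phi_{\intpt}$ is obtained from $f \in L^p$, $p > N/2$ $\Rightarrow$ $u \in W^{2,p}_{\mathrm{loc}}(\Omega)$ $\Rightarrow$ $\nabla u \in L^{q}_{\mathrm{loc}}$ with $q > N$, paired by H\"older against the $|x-\intpt|^{1-N}$ singularity of $\nabla\phi_{\intpt}$, which lies in $L^{q'}$ locally exactly when $q' < N/(N-1)$; your explicit near/far splitting (using $\nabla u \in L^2$ and boundedness of $\nabla\phi_{\intpt}$ away from $\intpt$) is if anything slightly cleaner than the paper's phrasing. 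Where you go beyond the paper is the second half: the paper simply asserts that the equivalence with \eqref{eq:coupling_def} ``follows from Green's first identity,'' whereas you prove it by excising $\overline{B_\epsilon(\intpt)}$, using $\Delta\phi_{\intpt}=0$ on $\Omega_\epsilon$, killing the outer boundary term by the zero trace of $u \in H^1_0(\Omega)$, and recovering $\intcoef u(\intpt)$ from the inner sphere via $\partial_r\fundsol_{\intpt}|_{r=\epsilon} = -|\partial B_\epsilon(\intpt)|^{-1}$ and the continuity of $u$ at $\intpt$ (which the $W^{2,p}_{\mathrm{loc}}\hookrightarrow C^{0,\alpha}_{\mathrm{loc}}$ embedding guarantees); the $\harmonic$-contribution vanishes since $|\partial B_\epsilon| \to 0$. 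This excision argument plays the same role as the cutoff-function argument the paper uses later for Theorem~\ref{thm:local_green_rep}, and what it buys is an explicit, self-contained verification of the distributional identity $-\Delta\phi_{\intpt} = \intcoef\,\delta(\cdot-\intpt)$ underlying \eqref{eq:coupling_def}, at the cost of a slightly longer proof; no step of it fails, so there is no gap.
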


\begin{proof}
Since $f \in L^p_{\mathrm{loc}}(\Omega)$ with $p > N/2$, elliptic regularity theory (see, e.g., \cite{evans2020partial}), we have $u \in W^{2,p}_{\mathrm{loc}}(\Omega)$.
By the Sobolev embedding theorem, this yields $u \in W^{1,q}_{\mathrm{loc}}(\Omega)$ and hence $\nabla u \in L^q_{\mathrm{loc}}(\Omega)$ for $\frac{1}{q} = \frac{1}{p} - \frac{1}{N}$. Since $p > N/2$, we have $q = \frac{pN}{N-p} > N$.

We next examine the integrability of $\nabla \phi_{\intpt}$. Because $\phi_{\intpt}$ contains the fundamental solution $\fundsol(\intpt,x)$, its gradient $\nabla \phi_{\intpt}$ exhibits a singularity of order $|x-\intpt|^{1-N}$ near $\intpt$. For the Lebesgue integral $\int_{\Omega} \nabla u \cdot \nabla \phi_{\intpt} dx$ to be well-defined via Hölder's inequality, it is sufficient that $\nabla \phi_{\intpt} \in L^{q'}(\Omega)$, where $q'$ is the conjugate exponent of $q$ (i.e., $\frac{1}{q} + \frac{1}{q'} = 1$). 
Using spherical coordinates centered at $\intpt$, this requirement reduces to the condition 
\begin{equation}
\int_{B_r(\intpt)} |\nabla \phi_{\intpt}(x)|^{q'} dx < \infty
\end{equation}
for a sufficiently small ball $B_r(\intpt)$ around $\intpt$. Since $|\nabla \phi_{\intpt}(x)| \sim |x-\intpt|^{1-N}$ near $\intpt$, the above integral becomes
\begin{equation}
\int_0^r \rho^{(1-N)q'} \rho^{N-1} d\rho = \int_0^r \rho^{(1-N)q' + N-1} d\rho < \infty.
\end{equation}
This improper integral converges if and only if $(1-N)q' + N-1 > -1$, which is equivalent to $q' < \frac{N}{N-1}$, or equivalently $q > N$.
Since this condition has already been established, the integral $\int_{\Omega} \nabla u \cdot \nabla \phi_{\intpt} dx$ is well-defined and finite. The equivalence with \eqref{eq:coupling_def} then follows from Green's first identity.  
\end{proof}

The above argument applies only when the point $\intpt$ is fixed in the interior of the domain. To obtain estimates that are uniform with respect to $\intpt$, including points approaching the boundary, a stronger regularity assumption $u \in W^{1,q}(\Omega)$ with $q > N$ is required.
Under this assumption, $\nabla u \in L^q(\Omega)$, while the gradient of the fundamental solution $\nabla \phi_{\intpt}$ belongs to $L^{q'}_{loc}(\mathbb{R}^N)$, where $1/q + 1/q' = 1$. Because $q > N$ implies $q' < N/(N-1)$, this condition precisely matches the integrability threshold for the $|x|^{1-N}$ singularity.
Consequently, the coupling is well-defined uniformly in $\intpt$ via H\"older's inequality.

Finally, by Morrey's inequality, we have the embedding
\begin{equation}
W^{1,q}(\Omega) \hookrightarrow C^{0,\alpha}(\overline{\Omega}), \quad \alpha = 1 - \frac{N}{q} > 0,
\end{equation}
which ensures that $u(\xi)$ is welldefined for every $\xi \in \overline{\Omega}$. Further details on these embeddings can be found in \cite{adams2003sobolev,gilbarg2001elliptic}.

When the domain boundary is sufficiently smooth (e.g., $C^{1,1}$) and Assumption \ref{assum:f-integrability} holds, elliptic regularity theory yields $u \in W^{2,p}(\Omega)$, which by the Sobolev embedding theorem implies $u \in W^{1,q}(\Omega)$ for $q > N$. 
For non-smooth domains, although there has been considerable research on the conditions under which such regularity may still hold, a complete characterization remains an open problem \cite{grisvard2011elliptic,seo2024sobolev}.
% However, as we will see in Section \ref{sec:twodim}, for two-dimensional polygonal domains, even when the domain is non-convex (i.e., when $u$ does not possess $W^{2,p}$ regularity), we still have $u \in W^{1,q}(\Omega)$ for $q > 2$.
On the other hand, as detailed in Section~\ref{sec:twodim}, in the case of two-dimensional polygonal domains, it is known that even if the domain is non-convex and $u$ does not possess $W^{2,p}$ regularity, one can still obtain $u \in W^{1,q}(\Omega)$ for a certain range of exponents $q > 2$.
% by combining Grisvard's theory with the Sobolev embedding theorem. In particular, when $f \in L^2(\Omega)$, this regularity holds for any $q < 2/(1-\pi/\omega_{\max})$, where $\omega_{\max} > \pi$ denotes the largest interior angle of the polygon. This estimate shows that although the obtained regularity is better than $W^{1,2}$, it is essentially limited by the geometric properties of the domain.

To give a rigorous meaning to boundary fluxes, we recall the notion of the weak Neumann trace.
\begin{definition}[Weak Neumann trace]\label{def:weak_neumann_trace}
Let $u\in H^1(\Omega)$ satisfy $-\Delta u = f$ in $\Omega$ in the sense of distributions, that is,
\[
 \int_{\Omega} \nabla u \cdot \nabla \varphi \, dx = \int_{\Omega} f\, \varphi \, dx \quad\text{for all } \varphi \in C_c^{\infty}(\Omega).
\]
Here, $C_c^{\infty}(\Omega)$ denotes the space of smooth functions with compact support in $\Omega$, i.e., infinitely differentiable functions that vanish outside a bounded subset of $\Omega$.
For $\psi \in H^{\frac{1}{2}}(\partial\Omega)$, the weak Neumann trace of $u$, denoted by $\frac{\partial u}{\partial n} \in H^{-\frac{1}{2}}(\partial\Omega)$, is defined by
\begin{equation}
 \left\langle \frac{\partial u}{\partial n},\, \psi \right\rangle_{H^{-\frac{1}{2}},H^{\frac{1}{2}}}
 := \int_{\Omega} \nabla u \cdot \nabla v \, dx - \int_{\Omega} f\, v \, dx
 \label{eq:weak_neumann_trace}
\end{equation}
for any $v\in H^{1}(\Omega)$ such that $\operatorname{tr} v = \psi$, where $\operatorname{tr}:H^{1}(\Omega)\to H^{\frac{1}{2}}(\partial\Omega)$ denotes the trace operator.
\end{definition}

When $u \in H^2(\Omega)$, the weak Neumann trace coincides with the classical normal derivative, and the definition \eqref{eq:weak_neumann_trace} reduces to Green's theorem
\[
 \int_{\partial\Omega} \frac{\partial u}{\partial n} \psi \, d\belem = \int_{\Omega} \nabla u \cdot \nabla v \, dx - \int_{\Omega} f\, v \, dx = \int_{\Omega} (-\Delta u - f) v \, dx + \int_{\partial\Omega} \frac{\partial u}{\partial n} v \, d\belem.
\]

\subsection{Local Green-representability}\label{subsec:local-Green}
Having completed the necessary preparations, we now introduce the concept of local Green-representability, which constitutes a cornerstone of the proposed theoretical framework. This concept is formulated in terms of fundamental solutions and provides a basis for representing solutions at fixed interior points.

We begin by defining local Green-representability for each fixed interior point $\intpt$. This localized notion serves as the foundation for the subsequent development of a global representation theory.

\begin{definition}[Local Green-representability]\label{def:green_representable_local}
For a fixed $\intpt \in \Omega$, let $\phi_{\intpt} \in \mathcal{T}_{\intpt}$ be a test function of the form \eqref{eq:test-function}.
A solution $u \in H^1_0(\Omega)$ of \eqref{eq:main} is said to be Green-representable with respect to $\phi_{\intpt}$ (or simply $\phi_{\intpt}$-Green-representable for short) if
\begin{equation}
\label{eq:green_rep}
\intcoef u(\intpt) = \langle f,\phi_{\intpt} \rangle + \Big\langle \frac{\partial u}{\partial n},\, \gamma(\phi_{\intpt}) \Big\rangle_{H^{-\frac{1}{2}}(\partial\Omega),H^{\frac{1}{2}}(\partial\Omega)}.
\end{equation}
\end{definition}

The freedom in choosing $\phi_{\intpt}$ in Definition \ref{def:green_representable_local} or $\testFuncMap$ in Definition \ref{def:green_representable_global} plays a pivotal role in the evaluation of the solution $u$, particularly in higher dimensions where $N\geq 2$. The details will be discussed in Subsections \ref{subsec:practical-remarks} and \ref{subsec:test_function_construction}.
As stated in the following theorem, under the regularity condition $f \in L^p(\Omega)$ with $p > N/2$ and the assumption that $\Omega$ is a bounded Lipschitz domain, any test function $H_{\intpt}$ of the form \eqref{eq:test-function} from Construction \ref{con:test-function} is admissible.

\begin{theorem}[Local Green-representability for regular data]\label{thm:local_green_rep}
Let $\Omega \subset \mathbb{R}^N$ $(N\ge 2)$ be a bounded Lipschitz domain and let $f\in L^p(\Omega)$ with $p>\frac{N}{2}$.
For every fixed $\intpt\in\Omega$ and test function $\phi_{\intpt} \in \mathcal{T}_{\intpt}$ of the form \eqref{eq:test-function}, a solution $u\in H^1_0(\Omega)$ of \eqref{eq:main} is $\phi_{\intpt}$-Green-representable.
\end{theorem}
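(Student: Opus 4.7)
The plan is to derive~\eqref{eq:green_rep} by combining the representation from Lemma~\ref{lem:coupling_interpretation} with the weak Neumann trace identity from Definition~\ref{def:weak_neumann_trace}, bridged by a limiting argument that excises a small ball around the singularity at $\intpt$.

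First, Lemma~\ref{lem:coupling_interpretation} together with \eqref{eq:coupling_def}, the distributional identity $-\Delta\phi_{\intpt}=\intcoef\,\delta(\cdot-\intpt)$, and the vanishing Dirichlet trace $u|_{\partial\Omega}=0$ immediately give
\begin{equation*}
\int_{\Omega}\nabla u\cdot\nabla\phi_{\intpt}\,dx = \intcoef\,u(\intpt).
\end{equation*}
It therefore suffices to prove
\begin{equation*}
\int_{\Omega}\nabla u\cdot\nabla\phi_{\intpt}\,dx = \langle f,\phi_{\intpt}\rangle + \Big\langle\tfrac{\partial u}{\partial n},\,\gamma(\phi_{\intpt})\Big\rangle_{H^{-1/2}(\partial\Omega),H^{1/2}(\partial\Omega)}.
\end{equation*}
This would be immediate from Definition~\ref{def:weak_neumann_trace} taken with $v=\phi_{\intpt}$, were it not that the non-$L^{2}$ singularity of $\nabla\fundsol(\intpt,\cdot)$ at $\intpt$ excludes $\phi_{\intpt}$ from $H^{1}(\Omega)$ for every $N\geq 2$.

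To bypass this obstruction I regularize by excising a ball. For $\epsilon>0$ sufficiently small that $\overline{B_{\epsilon}(\intpt)}\subset\Omega$, set $\Omega_{\epsilon}:=\Omega\setminus\overline{B_{\epsilon}(\intpt)}$. On this domain $\phi_{\intpt}\in C^{2}(\overline{\Omega_{\epsilon}})\subset H^{1}(\Omega_{\epsilon})$, so Definition~\ref{def:weak_neumann_trace} applies legitimately on $\Omega_{\epsilon}$. Using the locality of the weak Neumann trace on the disjoint boundary components $\partial\Omega$ and $\partial B_{\epsilon}(\intpt)$ (verifiable by testing against functions supported away from one of the components), one obtains
\begin{equation*}
\int_{\Omega_{\epsilon}}\nabla u\cdot\nabla\phi_{\intpt}\,dx = \int_{\Omega_{\epsilon}} f\,\phi_{\intpt}\,dx + \Big\langle\tfrac{\partial u}{\partial n},\gamma(\phi_{\intpt})\Big\rangle_{\partial\Omega} + R_{\epsilon},
\end{equation*}
where $R_{\epsilon}$ denotes the remaining weak Neumann trace pairing on $\partial B_{\epsilon}(\intpt)$. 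Lemmas~\ref{lem:integrability} and~\ref{lem:coupling_interpretation} supply the $L^{1}$ integrability needed so that, as $\epsilon\to 0$, dominated convergence carries the integrals over $\Omega_{\epsilon}$ to those over $\Omega$, while the $\partial\Omega$ pairing is $\epsilon$-independent. The theorem then follows provided $R_{\epsilon}\to 0$.

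The vanishing of $R_{\epsilon}$ is the main obstacle. My plan is to rewrite $R_{\epsilon}$ via Definition~\ref{def:weak_neumann_trace} applied on the inner ball $B_{\epsilon}(\intpt)$, using as test function the harmonic extension $\widetilde{v}_{\epsilon}\in H^{1}(B_{\epsilon})$ of $\phi_{\intpt}|_{\partial B_{\epsilon}}$; this yields, up to the orientation sign, $R_{\epsilon}=\int_{B_{\epsilon}} f\,\widetilde{v}_{\epsilon}\,dx-\int_{B_{\epsilon}}\nabla u\cdot\nabla\widetilde{v}_{\epsilon}\,dx$. Since $\intcoef\fundsol(\intpt,\cdot)$ is radially constant on $\partial B_{\epsilon}$, its harmonic extension is the scalar $c_{\epsilon}$ with $|c_{\epsilon}|=O(|\log\epsilon|)$ for $N=2$ and $O(\epsilon^{2-N})$ for $N\geq 3$; the remaining contribution from $\harmonic$ has oscillation $O(\epsilon)$ on $\partial B_{\epsilon}$, whence the gradient of its harmonic extension is uniformly bounded by $\|\nabla\harmonic\|_{L^{\infty}(\overline{\Omega})}$. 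The gradient integral then vanishes by absolute continuity of $\nabla u\cdot\nabla\widetilde{v}_{\epsilon}$ on shrinking sets, while the source integral satisfies $|\int_{B_{\epsilon}} f\,\widetilde{v}_{\epsilon}\,dx|\lesssim|c_{\epsilon}|\,\|f\|_{L^{p}(B_{\epsilon})}\,|B_{\epsilon}|^{1/p'}\lesssim\epsilon^{2-N/p}\to 0$, the decay relying precisely on the threshold $p>N/2$. This careful balancing of fundamental-solution growth against shrinking measure under the prescribed $L^{p}$ assumption is the technical crux of the argument.
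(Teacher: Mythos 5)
Your proposal is correct, but it reaches \eqref{eq:green_rep} by a genuinely different route than the paper. The paper never leaves the domain $\Omega$: it multiplies $\phi_{\intpt}$ by a cutoff $\chi_{\varepsilon}$ vanishing on $B_{\varepsilon}(\intpt)$, applies Definition~\ref{def:weak_neumann_trace} on $\Omega$ with $v_{\varepsilon}=\chi_{\varepsilon}\phi_{\intpt}$, and shows the commutator term $\int_{\Omega}\phi_{\intpt}\,\nabla u\cdot\nabla\chi_{\varepsilon}\,dx$ on the annulus $B_{2\varepsilon}\setminus B_{\varepsilon}$ vanishes, using the local behavior of $\nabla u$ near $\intpt$ together with the annulus norms of $\phi_{\intpt}$. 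You instead excise the ball, split the Neumann pairing over the two boundary components of $\Omega_{\epsilon}$, and convert the inner-sphere flux $R_{\epsilon}$ into volume integrals over $B_{\epsilon}$ against the harmonic extension $\widetilde v_{\epsilon}$; the cleanest justification of that conversion (and of your ``locality'' claim) is to glue $\widetilde v_{\epsilon}$ to a test function on $\Omega_{\epsilon}$ vanishing near $\partial\Omega$, obtaining an element of $H^{1}_{0}(\Omega)$, and invoke the weak formulation \eqref{eq:main} directly --- you should state this explicitly rather than appeal to locality in passing. Your route buys something concrete: the vanishing of $R_{\epsilon}$ needs only $\nabla u\in L^{2}$ near $\intpt$ (absolute continuity of the integral) and the balance $|c_{\epsilon}|\,|B_{\epsilon}|^{1/p'}\lesssim\epsilon^{2-N/p}$ (with an extra harmless $|\log\epsilon|$ factor when $N=2$), whereas the paper's annulus estimate invokes continuity of $\nabla u$ near $\intpt$, a stronger interior-regularity input than $f\in L^{p}$, $p>N/2$, strictly provides; in exchange, the paper's argument is shorter and avoids the bookkeeping of two-sided traces on $\partial B_{\epsilon}$. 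Two small polish points: the harmonic extension of $\harmonic|_{\partial B_{\epsilon}}$ is simply $\harmonic$ itself by uniqueness of the Dirichlet problem, so the gradient bound $\|\nabla\harmonic\|_{L^{\infty}(\overline{\Omega})}$ follows immediately without the oscillation argument; and both proofs finish identically, by combining the limiting identity with Lemma~\ref{lem:coupling_interpretation} and the vanishing boundary trace of $u$.
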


\begin{proof}
Fix $\intpt\in\Omega$ and set $\psi:=\gamma(\phi_{\intpt})$. 
We have $\nabla\phi_{\intpt} \in L^2(\Omega \setminus B_r(\intpt))$ for all $r > 0$. Since $p > \frac{N}{2}$, it follows that $f\,\phi_{\intpt} \in L^1(\Omega)$.
For sufficiently small $\varepsilon>0$ with $B_{2\varepsilon}(\intpt)\subset\Omega$, choose a smooth cutoff function $0\le\chi_{\varepsilon}\le1$ satisfying
\[
\chi_{\varepsilon}=0\text{ on }B_{\varepsilon}(\intpt),\qquad
\chi_{\varepsilon}=1\text{ on }\Omega\setminus B_{2\varepsilon}(\intpt),\qquad
|\nabla\chi_{\varepsilon}|\le C/\varepsilon,
\]
and define $v_{\varepsilon}:=\chi_{\varepsilon}\phi_{\intpt}\in H^1(\Omega)$. Because $\chi_{\varepsilon}\equiv1$ in a neighborhood of $\partial\Omega$, we have $\gamma(v_{\varepsilon})=\psi$. By the definition of the normal derivative,
\[
\Big\langle \frac{\partial u}{\partial n},\psi\Big\rangle_{H^{-\frac{1}{2}},H^{\frac{1}{2}}}
 = \int_{\Omega}\nabla u\cdot\nabla v_{\varepsilon}\,dx-\int_{\Omega}f\,v_{\varepsilon}\,dx.
\]
Expanding the right-hand side yields
\[
\int_{\Omega}\chi_{\varepsilon}\nabla u\cdot\nabla\phi_{\intpt}\,dx
 +\int_{\Omega}\phi_{\intpt}\nabla u\cdot\nabla\chi_{\varepsilon}\,dx
 -\int_{\Omega}f\,\chi_{\varepsilon}\phi_{\intpt}\,dx.
\]
The first and third terms converge, respectively to $\int_{\Omega}\nabla u\cdot\nabla\phi_{\intpt}\,dx$ and $\int_{\Omega}f\,\phi_{\intpt}\,dx$ by dominated convergence. For the middle term, observe that $\operatorname{supp}\nabla\chi_{\varepsilon}\subset A_{\varepsilon}:=B_{2\varepsilon}(\intpt)\setminus B_{\varepsilon}(\intpt)$ Hence,
\[
\Big|\int_{\Omega}\phi_{\intpt}\nabla u\cdot\nabla\chi_{\varepsilon}\,dx\Big|
\le \|\nabla u\|_{L^2(A_{\varepsilon})}\,\|\phi_{\intpt}\nabla\chi_{\varepsilon}\|_{L^2(A_{\varepsilon})}
\le \|\nabla u\|_{L^2(A_{\varepsilon})}\,\frac{C}{\varepsilon}\,\|\phi_{\intpt}\|_{L^2(A_{\varepsilon})}.
\]
Since $u \in H^1(\Omega)$, by interior regularity $\nabla u$ is continuous near $\intpt$, so $\|\nabla u\|_{L^2(A_{\varepsilon})} = O(\varepsilon^{N/2})$.
Since $\phi_{\intpt}$ behaves like the fundamental solution near $\intpt$, we have $\|\phi_{\intpt}\|_{L^2(A_{\varepsilon})}=O(\varepsilon^{(4-N)/2})$ for $N\ge3$ and $O(\varepsilon|\log\varepsilon|)$ for $N=2$.
Consequently, the factor $(C/\varepsilon)\|\phi_{\intpt}\|_{L^2(A_{\varepsilon})} = O(\varepsilon^{(2-N)/2})$ for $N \geq 3$ and $O(|\log\varepsilon|)$ for $N=2$, which diverges as $\varepsilon \to 0$.
Nevertheless, multiplying both factors yields $O(\varepsilon)$ for $N \geq 3$ and $O(\varepsilon|\log\varepsilon|)$ for $N=2$, both of which vanish as $\varepsilon \to 0$.
Hence, the mixed term converges to $0$. Taking the limit yields
\[
\Big\langle \frac{\partial u}{\partial n},\psi\Big\rangle_{H^{-\frac{1}{2}},H^{\frac{1}{2}}}
 =\int_{\Omega}\nabla u\cdot\nabla\phi_{\intpt}\,dx-\int_{\Omega}f\,\phi_{\intpt}\,dx.
\]
Combining this identity with Lemma~\ref{lem:coupling_interpretation} yields \eqref{eq:green_rep}, completing the proof.
\end{proof}

\begin{remark}
When $\Omega$ is a bounded Lipschitz domain, a Green's function exists for each source point $\intpt \in \Omega$ \cite{dahlberg1979lq}. This preceding theorem may be viewed as a generalization of this classical result, or at least as being consistent with it. Indeed, if one chooses $\phi_{\intpt}$ to be the Green's function, then $\gamma(\phi_{\intpt}) = 0$ on $\partial\Omega$, and the boundary integral term vanishes, thereby recovering the classical representation formula. However, explicit forms of Green's functions are rarely available in practice.
\end{remark}

In polytopic domains, the boundary may contain singular points (edges, vertices) where the normal vector is not well-defined.
We now consider an $N$-dimensional bounded polytopic domain $\Omega$. That is, $\Omega$ is a bounded domain composed of finitely many (hyper) planar pieces. For this domain, solutions of \eqref{eq:main} may exhibit singular behavior(i.e., loss of $W^{2,p}$ regularity) at a finite number of ``corners'' on the boundary.
We denote these singular points (corners) by $x_c$ in what follows.
Nevertheless, by interior regularity theory, we have $u \in W^{2,p}_{\mathrm{loc}}(\Omega)$, where $p > N/2$ is determined by the regularity of $f$.

\begin{theorem}\label{thm:thm:Nd_representation_local}
Let $\Omega \subset \mathbb{R}^N$ ($N \geq 2$) be a bounded $N$-dimensional polytopic domain and let $f \in L^p(\Omega)$ with $p > N/2$. Let $u \in H^1_0(\Omega)$ be the weak solution of \eqref{eq:main}.
Then, for any fixed $\intpt\in \Omega$ and any $\phi_{\intpt} \in \mathcal{T}_{\intpt}$ of the form \eqref{eq:test-function}, the Cauchy principal value $\mathrm{p.v.}\int_{\partial\Omega} \frac{\partial u}{\partial n}\gamma(\phi_{\intpt})\,d\belem$ exists and satisfies
\[
    \intcoef u(\intpt) = \langle f,\phi_{\intpt} \rangle + \mathrm{p.v.}\int_{\partial\Omega} \frac{\partial u}{\partial n}\gamma(\phi_{\intpt})\,d\belem.
\]
\end{theorem}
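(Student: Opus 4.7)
The strategy is to augment the cutoff argument of Theorem~\ref{thm:local_green_rep} with additional excisions around each corner of the polytope boundary. Let $\{x_c\}_{c\in\mathcal{C}}$ denote the finite set of singular vertices/edges of $\partial\Omega$. For small $\rho>0$, introduce a smooth cutoff $\chi_\rho:\overline{\Omega}\to[0,1]$ that vanishes on $\bigcup_c B_{\rho/2}(x_c)$, equals $1$ outside $\bigcup_c B_\rho(x_c)$, and satisfies $|\nabla\chi_\rho|\le C/\rho$ with $\nabla\chi_\rho$ supported on the annular region $A_\rho:=\bigcup_c\bigl(B_\rho(x_c)\setminus B_{\rho/2}(x_c)\bigr)$. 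Applying Definition~\ref{def:weak_neumann_trace} to the test function $v_\rho:=\chi_\rho\phi_{\intpt}\in H^1(\Omega)$, whose trace on $\partial\Omega$ is $\chi_\rho|_{\partial\Omega}\gamma(\phi_{\intpt})$, yields
\begin{equation*}
\Big\langle\tfrac{\partial u}{\partial n},\chi_\rho\gamma(\phi_{\intpt})\Big\rangle_{H^{-\frac{1}{2}},H^{\frac{1}{2}}}
=\int_\Omega\chi_\rho\nabla u\cdot\nabla\phi_{\intpt}\,dx
+\int_\Omega\phi_{\intpt}\nabla u\cdot\nabla\chi_\rho\,dx
-\int_\Omega f\chi_\rho\phi_{\intpt}\,dx.
\end{equation*}
Since $\operatorname{supp}\chi_\rho|_{\partial\Omega}$ is a compact subset of the regular portion of $\partial\Omega$, boundary elliptic regularity yields $u\in W^{2,p}_{\mathrm{loc}}$ there, so the weak Neumann trace coincides with a classical $L^p_{\mathrm{loc}}$ function, and the duality pairing on the left collapses to the Lebesgue integral $\int_{\partial\Omega}\chi_\rho(\partial_n u)\gamma(\phi_{\intpt})\,d\belem$.

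Next I pass to the limit $\rho\to 0$ in the three interior terms. The first, $\int_\Omega\chi_\rho\nabla u\cdot\nabla\phi_{\intpt}\,dx$, converges to $\int_\Omega\nabla u\cdot\nabla\phi_{\intpt}\,dx$ by dominated convergence, since the integrand is globally $L^1$ by Lemma~\ref{lem:coupling_interpretation} (the $|x-\intpt|^{1-N}$ singularity of $\nabla\phi_{\intpt}$ at the fixed interior point is integrable against $\nabla u$). The third, $\int_\Omega f\chi_\rho\phi_{\intpt}\,dx$, converges to $\langle f,\phi_{\intpt}\rangle$ by Lemma~\ref{lem:integrability} and dominated convergence. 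For the mixed term, $\phi_{\intpt}$ is uniformly bounded on $A_\rho$ for small $\rho$ (since $\intpt\notin A_\rho$); Cauchy--Schwarz with $|A_\rho|\le C\rho^N$ gives
\begin{equation*}
\Big|\int_{A_\rho}\phi_{\intpt}\nabla u\cdot\nabla\chi_\rho\,dx\Big|
\le\frac{C}{\rho}\|\phi_{\intpt}\|_{L^\infty(A_\rho)}|A_\rho|^{1/2}\|\nabla u\|_{L^2(A_\rho)}
\le C\rho^{N/2-1}\|\nabla u\|_{L^2(A_\rho)}.
\end{equation*}
For $N\ge 3$ the algebraic prefactor vanishes; for $N=2$ the factor $\|\nabla u\|_{L^2(A_\rho)}\to 0$ by absolute continuity of the Lebesgue integral applied to $\nabla u\in L^2(\Omega)$.

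Collecting these limits, the right-hand side tends to $\int_\Omega\nabla u\cdot\nabla\phi_{\intpt}\,dx-\langle f,\phi_{\intpt}\rangle$, which by Lemma~\ref{lem:coupling_interpretation} together with the homogeneous Dirichlet condition $\gamma(u)=0$ (so that the boundary contribution in \eqref{eq:coupling_def} vanishes) equals $\intcoef u(\intpt)-\langle f,\phi_{\intpt}\rangle$. Consequently the left-hand Lebesgue integral $\int_{\partial\Omega}\chi_\rho(\partial_n u)\gamma(\phi_{\intpt})\,d\belem$ itself admits a finite limit as $\rho\to 0$; by definition this limit is the principal value $\mathrm{p.v.}\int_{\partial\Omega}(\partial_n u)\gamma(\phi_{\intpt})\,d\belem$, and equating both sides delivers the claimed identity. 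The main technical obstacle is the borderline $N=2$ case of the cutoff-gradient estimate, where the algebraic decay $\rho^{N/2-1}$ saturates at $O(1)$ and the required smallness must be extracted entirely from the absolute continuity of $\nabla u\in L^2(\Omega)$ on the shrinking sets $A_\rho$; a secondary delicate point is verifying that on $\operatorname{supp}\chi_\rho|_{\partial\Omega}$---which stays uniformly away from all vertices/edges---half-space regularity up to the smooth faces of the polytope suffices to promote $\partial u/\partial n$ to a classical function, so that the duality pairing coincides with the boundary Lebesgue integral.
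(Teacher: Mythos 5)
Your overall strategy (excise the corners and pass to the limit) is in the spirit of the paper's proof, but there is a genuine gap at the very first step. You apply Definition~\ref{def:weak_neumann_trace} to $v_\rho:=\chi_\rho\phi_{\intpt}$ and assert $v_\rho\in H^1(\Omega)$; this is false. Your cutoffs $\chi_\rho$ only excise neighborhoods of the boundary corners, so for small $\rho$ we have $\chi_\rho\equiv 1$ near the interior evaluation point $\intpt$, where $\nabla\phi_{\intpt}$ inherits the fundamental-solution singularity $|x-\intpt|^{1-N}$. Since
\[
\int_{B_r(\intpt)}|x-\intpt|^{2(1-N)}\,dx \;=\; c\int_0^r \rho^{\,1-N}\,d\rho \;=\;\infty \qquad (N\ge 2),
\]
$v_\rho\notin H^1(\Omega)$ for any $N\ge2$, and the displayed identity for $\big\langle \frac{\partial u}{\partial n},\chi_\rho\gamma(\phi_{\intpt})\big\rangle$ does not follow from the definition of the weak Neumann trace. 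Removing exactly this singularity is the entire purpose of the cutoff in the proof of Theorem~\ref{thm:local_green_rep}; your argument announces that it will ``augment'' that proof but then drops the cutoff around $\intpt$. The step is repairable: introduce a second cutoff $\chi_\varepsilon$ vanishing on $B_\varepsilon(\intpt)$, apply Definition~\ref{def:weak_neumann_trace} to $\chi_\varepsilon\chi_\rho\phi_{\intpt}\in H^1(\Omega)$ (whose trace is still $\chi_\rho\gamma(\phi_{\intpt})$), and send $\varepsilon\to0$ first, exactly as in Theorem~\ref{thm:local_green_rep}; but as written the key identity is unjustified.

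A second, smaller gap concerns the final identification. What your limit produces is $\lim_{\rho\to0}\int_{\partial\Omega}\chi_\rho\,\frac{\partial u}{\partial n}\,\gamma(\phi_{\intpt})\,d\belem$ with a smooth cutoff, and you simply declare this to be the principal value. The principal value in the statement refers to sharply truncated boundary integrals over $\partial\Omega$ minus shrinking corner neighborhoods; to identify the two limits you must show that the contribution of the transition zone $\partial\Omega\cap\big(B_\rho(x_c)\setminus B_{\rho/2}(x_c)\big)$ is negligible, which requires some quantitative control of $\frac{\partial u}{\partial n}$ near the corner. This is precisely where the paper works: it integrates by parts on the truncated domain and invokes the scaled trace estimate (Lemma~\ref{lem:trace_value_general_n_dim}), using for $N=2$ the improved regularity $u\in W^{1,q}(\Omega)$, $q>2$; alternatively, the $L^1(\partial\Omega)$ integrability of $\frac{\partial u}{\partial n}$ established in the proof of Theorem~\ref{thm:point_estimate} would serve. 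On the positive side, your treatment of the volume terms near the corner is sound and genuinely simpler than the paper's: dominated convergence for $\int_\Omega\chi_\rho\nabla u\cdot\nabla\phi_{\intpt}\,dx$ (the integrand is globally $L^1$ since $\nabla\phi_{\intpt}$ is bounded away from $\intpt$ and Lemma~\ref{lem:coupling_interpretation} handles the singularity), and absolute continuity of $\int|\nabla u|^2$ for the mixed term when $N=2$, so no trace scaling or enhanced regularity is needed there. Thus your route is salvageable, and in places cleaner, but the two steps above must be filled in before the proof is complete.
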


\begin{proof}
For simplicity, we assume that there is a single singular point $x_c$, although our theory extends naturally to the case of multiple singular points. Near the corner $x_c$ of $\partial\Omega$, let $\Omega_\varepsilon$ denote an $\varepsilon$-neighborhood of $x_c$.
Note that depending on the geometry of $\Omega$, the intersection $\Omega\cap\Omega_{\varepsilon}$ may be disconnected. In such cases, the following argument is applied to each connected component of $\Omega\cap\Omega_{\varepsilon}$. Recall that we are considering an arbitrary but fixed point $\intpt \in \Omega$. By interior regularity, the coupling $\langle \nabla u, \nabla \phi_{\intpt} \rangle$ is well-defined as a Lebesgue integral (see Subsection \ref{subsec:testfunctions}).

Furthermore, by Construction \ref{con:test-function}, for sufficiently small $\varepsilon$, both $\phi_{\intpt} \in \testFuncSet$ and its derivatives remain bounded in $\Omega_\varepsilon$. This boundedness property is used in the estimates below.

Since $\langle \nabla u, \nabla \phi_{\intpt} \rangle$ is well-defined as a Lebesgue integral, the Green representation formula applies.
\begin{equation}
\label{eq:green_rep_local}
\int_{\partial(\Omega\setminus\Omega_{\varepsilon})} \frac{\partial u}{\partial n}\phi_{\intpt} d\belem
= \langle \nabla u, \nabla \phi_{\intpt} \rangle - \int_{\Omega\setminus\Omega_{\varepsilon}} f\phi_{\intpt} dx - \int_{\Omega\cap\Omega_{\varepsilon}} \nabla u \cdot \nabla \phi_{\intpt} dx.
\end{equation}

We proceed to show that, as $\varepsilon \to 0$
\begin{enumerate}
\item $\int_{\Omega\setminus\Omega_{\varepsilon}}f\cdot\phi_{\intpt} dx \to \int_{\Omega}f\cdot\phi_{\intpt} dx$;
\item $\int_{\Omega\cap\Omega_{\varepsilon}}\nabla u\cdot\nabla\phi_{\intpt} dx \to 0$.
\end{enumerate}

{\bf First Limit:} For the first limit, by H\"older's inequality,
\[
\left|\int_{\Omega}f\cdot\phi_{\intpt}dx-\int_{\Omega\setminus\Omega_{\varepsilon}}f\cdot\phi_{\intpt}dx\right|
\leq \|\phi_{\intpt}\|_{L^\infty(\Omega_\varepsilon)} \|f\|_{L^p(\Omega)} |\Omega_\varepsilon|^{1-\frac{1}{p}}
\leq C_1\|f\|_{L^p(\Omega)} \varepsilon^{N(1-\frac{1}{p})}.
\]
The condition $p > N/2 \geq 1$ ensures that the exponent of $\varepsilon$ is positive, and therefore this term vanishes as $\varepsilon \to 0$.

{\bf Second Limit:} For the second limit, we apply Green's first identity. Since $\Delta \phi_{\intpt} = 0$ in $\Omega \cap \Omega_\varepsilon$, we have
\[
    \left|\int_{\Omega\cap\Omega_{\varepsilon}}\nabla u\cdot\nabla\phi_{\intpt} dx\right|
    = \left|\int_{\partial(\Omega\cap\Omega_{\varepsilon})}u\frac{\partial\phi_{\intpt}}{\partial n}d\belem\right|.
\]
Because $\partial\phi_{\intpt}/\partial n$ is bounded, setting $\left\|\frac{\partial\phi_{\intpt}}{\partial n}\right\|_{L^{\infty}}=C_2'$, we have
\[
    \left|\int_{\partial(\Omega\cap\Omega_{\varepsilon})}u\frac{\partial\phi_{\intpt}}{\partial n}d\belem\right|
    \leq C_2' \|u\|_{L^1(\partial(\Omega\cap\Omega_{\varepsilon}))}.
\]
We now estimate the boundary trace using Lemma~\ref{lem:trace_value_general_n_dim} with $p=1$. For $N \geq 3$, applying the lemma with $q=2$ (i.e., using $H^1$ regularity) yields the exponent $(N-1)/1 - N/2 = N/2 - 1 \geq 1/2 > 0$, so for some constant $C_2''>0$,
\[
    \|u\|_{L^1(\partial(\Omega\cap\Omega_{\varepsilon}))} \le C_2'' \varepsilon^{\frac{N}{2}-1} \|u\|_{H^1(\Omega)}.
\]
For $N=2$, the exponent $N/2-1=0$ does not guarantee convergence. However, as shown in Subsection~\ref{subsec:2d-regularity}, the solution on a two-dimensional polygonal domain satisfies $u \in W^{1,q}(\Omega)$ for some $q > 2$. Using this enhanced regularity, the exponent becomes $(N-1)/1 - N/q = 1 - 2/q > 0$, yielding
\[
    \|u\|_{L^1(\partial(\Omega\cap\Omega_{\varepsilon}))} \le C_2'' \varepsilon^{1-\frac{2}{q}} \|u\|_{W^{1,q}(\Omega)}.
\]
In both cases, the exponent is positive, so the last term converges to zero as $\varepsilon \to 0$.

Combining these convergence results with \eqref{eq:green_rep_local}, we conclude that:
\[
    \mathrm{p.v.}\int_{\partial\Omega} \frac{\partial u}{\partial n}\gamma(\phi_{\intpt}) d\belem
    = \langle \nabla u, \nabla \phi_{\intpt} \rangle - \int_{\Omega} f\phi_{\intpt} dx.
\]
\end{proof}

\subsection{Global Green-representability}\label{subsec:global-Green}
Having established local Green-representability (Theorem \ref{thm:local_green_rep}), we now introduce its global counterpart.  
The transition from the local to the global setting can be viewed as defining this property as a corollary of Theorem \ref{thm:local_green_rep}, but with careful attention to the regularity requirements imposed on the solution.
The principal difficulty arises from the fact that the evaluation point $\intpt$ may approach arbitrarily close to the boundary $\partial\Omega$.
Recall that the test function $\phi_{\intpt}$ behaves like the fundamental solution, and in particular its gradient exhibits a singularity of order $|x-\intpt|^{1-N}$ near $\intpt$.
When $\intpt$ is fixed in the interior, this singularity is isolated from the boundary. However, in the global setting where $\intpt$ may approach the boundary (and in particular, the singular points $x_c$ of the boundary), the interaction between the singularity of $\nabla\phi_{\intpt}$ and the potentially singular behavior of $\nabla u$ near $x_c$ becomes nontrivial.
Specifically, if $u$ only belongs to $H^1(\Omega)$ (i.e., $\nabla u \in L^2$), the product $|\nabla u \cdot \nabla \phi_{\intpt}|$ may fail to be Lebesgue integrable when $\intpt$ is sufficiently close to a singular boundary point $x_c$. Since the boundary term $\langle \frac{\partial u}{\partial n}, \gamma(\phi_{\intpt}) \rangle$ in \eqref{eq:green_rep} is defined through this coupling (see Definition \ref{def:weak_neumann_trace}), such a failure of integrability renders the boundary term undefined. Consequently, the representation formula \eqref{eq:green_rep} no longer holds in this setting.

To avoid this difficulty and ensure that the boundary term is well-defined uniformly with respect to $\intpt$, we require stronger regularity on $u$.
As shown in Subsection \ref{subsec:testfunctions}, if $u \in W^{1,q}(\Omega)$ for some $q > N$, then by H\"older's inequality the coupling $\langle \nabla u, \nabla \phi_{\intpt} \rangle$ is guaranteed to be well-defined uniformly in $\intpt$.
Motivated by this observation, we now define global Green-representability and state the corresponding corollary.

\begin{definition}[Global Green-representability]\label{def:green_representable_global}
Let $\testFuncMap: \intpt \mapsto \phi_{\intpt}$ be a mapping that assigns to each point $\intpt \in \Omega$ a test function $\phi_{\intpt} \in \mathcal{T}_{\intpt}$ of the form \eqref{eq:test-function}. 
A solution $u$ of \eqref{eq:main} is said to be globally Green-representable with respect to $\testFuncMap$ if $u \in W^{1,q}(\Omega)$ for some $q > N$ and $u$ is $\phi_{\intpt}$-Green-representable for every $\intpt \in \Omega$.
\end{definition}

As a direct consequence of Theorem \ref{thm:thm:Nd_representation_local} and the regularity discussion above, we obtain:

\begin{corollary}\label{cor:Nd_representation_global}
Let $\Omega \subset \mathbb{R}^N$ ($N \ge 2$) be a bounded $N$-dimensional polytopic domain and let $f \in L^p(\Omega)$ with $p > N/2$. 
If a solution $u$ of problem \eqref{eq:main} satisfies the regularity condition $u \in H^1_0(\Omega) \cap W^{1,q}(\Omega)$ for some $q > N$, then $u$ is globally Green-representable with respect to any mapping $\testFuncMap$ constructed by Construction \ref{con:test-function}.
\end{corollary}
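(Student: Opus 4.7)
The plan is to derive the corollary as a direct consequence of the local result Theorem~\ref{thm:thm:Nd_representation_local} together with a regularity upgrade from the assumption $u \in W^{1,q}(\Omega)$ with $q > N$. Concretely, I would fix an arbitrary $\intpt \in \Omega$, apply Theorem~\ref{thm:thm:Nd_representation_local} to obtain a representation of $\intcoef u(\intpt)$ in principal-value form, and then show that under the global regularity this identity is actually a bona fide instance of $\phi_{\intpt}$-Green-representability in the sense of Definition~\ref{def:green_representable_local}. Since this is verified for every $\intpt \in \Omega$ and $u \in W^{1,q}(\Omega)$ by hypothesis, global Green-representability with respect to any $\testFuncMap$ constructed as in Construction~\ref{con:test-function} follows immediately from Definition~\ref{def:green_representable_global}.

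Before invoking Theorem~\ref{thm:thm:Nd_representation_local}, I would record the key consequences of the enhanced regularity. By Morrey's embedding, $u$ admits a continuous representative on $\overline{\Omega}$, so the pointwise value $u(\intpt)$ is unambiguous for every $\intpt$, including those approaching $\partial\Omega$. Because the singularity of the fundamental solution entering $\phi_{\intpt}$ lies at the interior point $\intpt$, the test function is smooth up to $\partial\Omega$, so $\gamma(\phi_{\intpt}) \in H^{\frac{1}{2}}(\partial\Omega)$ and the duality pairing in~\eqref{eq:green_rep} is meaningful. Moreover, as observed in Subsection~\ref{subsec:testfunctions}, the condition $q > N$ yields $\nabla u \in L^q(\Omega)$ and $\nabla \phi_{\intpt} \in L^{q'}_{\mathrm{loc}}(\mathbb{R}^N)$ with $q' < N/(N-1)$, which exactly matches the $|x-\intpt|^{1-N}$ singularity; hence H\"older's inequality makes $\int_{\Omega} \nabla u \cdot \nabla \phi_{\intpt}\,dx$ an absolutely convergent Lebesgue integral with bounds uniform in $\intpt$.

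The step I expect to be the main obstacle is identifying the Cauchy principal value produced by Theorem~\ref{thm:thm:Nd_representation_local} with the $H^{-\frac{1}{2}},H^{\frac{1}{2}}$ duality pairing demanded in~\eqref{eq:green_rep}, uniformly with respect to $\intpt$. My plan is to mimic the cutoff strategy of Theorem~\ref{thm:local_green_rep}: excise an $\varepsilon$-ball around $\intpt$ via a smooth cutoff $\chi_\varepsilon$, form $v_\varepsilon := \chi_\varepsilon \phi_{\intpt} \in H^1(\Omega)$ with $\gamma(v_\varepsilon) = \gamma(\phi_{\intpt})$, and evaluate the weak Neumann trace definition~\eqref{eq:weak_neumann_trace} at $v_\varepsilon$. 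Passing to $\varepsilon \to 0$ reduces, as before, to controlling the mixed term $\int_{\Omega} \phi_{\intpt}\,\nabla u \cdot \nabla\chi_\varepsilon\,dx$; the improved regularity $\nabla u \in L^q$ with $q > N$ is precisely what is needed to dominate this remainder and send it to zero, now with constants that do not degenerate as $\intpt$ approaches singular boundary points $x_c$. Combining this limit with the Lebesgue representation of $\langle \nabla u, \nabla \phi_{\intpt}\rangle$ recovers the identity~\eqref{eq:green_rep} for every $\intpt$, yielding the required global Green-representability.
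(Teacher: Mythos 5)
Your proposal is correct and follows essentially the same route as the paper: the paper states this corollary as a direct consequence of Theorem~\ref{thm:thm:Nd_representation_local} combined with the regularity discussion of Subsections~\ref{subsec:testfunctions} and~\ref{subsec:global-Green} (H\"older with $\nabla u\in L^q$, $q>N$, against the $|x-\intpt|^{1-N}$ singularity, plus Morrey continuity), which is precisely your argument. Your additional cutoff step reconciling the principal-value boundary term with the $H^{-1/2},H^{1/2}$ pairing is just a more explicit rendering of what the paper leaves implicit, and the exponent bookkeeping you indicate ($\varepsilon^{1-N/q}$, resp.\ $\varepsilon^{1-2/q}|\log\varepsilon|$) indeed closes it.
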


This regularity condition $u \in W^{1,q}(\Omega)$ ($q>N$) also ensures, by Sobolev embedding, that $u$ is continuous up to the boundary, making the pointwise evaluation $u(\intpt)$ is well defined everywhere in $\overline{\Omega}$.

\subsubsection{Globally Green-representable solutions in special cases}\label{subsubsec:gr-existence-special-cases}
The representation \eqref{eq:green_rep} reveals a notable property: every globally Green-representable solution admits pointwise evaluation through a simple formula. While globally Green-representable solutions generally form a proper subset of weak solutions, the weak solutions of \eqref{eq:main} become globally Green-representable (with particularly convenient construction of $\phi_{\intpt}$ along with Construction \ref{con:test-function}) in the following two basic cases:
\begin{enumerate}
\item Solutions in one-dimensional domains
\item Solutions in smooth domains (e.g., $C^{1,1}$)
\end{enumerate}

For one-dimensional domains, the fundamental solution has the form $\fundsol(s,x) = -\frac{|x-s|}{2}$, and for any interior point $\intpt$, we can construct a test function $\phi_{\intpt} \in \testFuncSet$ as a linear combination of fundamental solutions such that $\phi_{\intpt} \in H^1_0(\Omega)$. This construction directly establishes \eqref{eq:green_rep}.
In other words, for any $\intpt \in \Omega$, the intersection of $\testFuncSet$ and $H^1_0(\Omega)$ is nonempty.
Since $\phi_{\intpt}$ vanishes on the boundary, the boundary integral term disappears, resulting in a formulation analogous to the standard weak form.
The detailed construction of such test functions is presented in Section \ref{sec:onedim}.

For smooth domains $\Omega$, solutions inherit higher regularity. If $f \in L^p(\Omega)$ with $p > N/2$,as in Assumption \ref{assum:f-integrability}, then standard elliptic regularity theory implies $u \in W^{2,p}(\Omega)$. By the Sobolev embedding theorem, we obtain $u \in W^{1,q}(\Omega)$ with $q > N$, satisfying the integrability requirement discussed above. Moreover, for such $u$, Green's formula yields the following identity
\begin{align*}
\langle \nabla u, \nabla \phi_{\intpt} \rangle &= -\int_\Omega \Delta u\,\phi_{\intpt} dx + \int_{\partial\Omega}\frac{\partial u}{\partial n}\phi_{\intpt} d\belem \\
&= \langle f,\phi_{\intpt} \rangle + \int_{\partial\Omega}\frac{\partial u}{\partial n}\phi_{\intpt} d\belem,
\end{align*}
where $-\Delta u = f$ was used in the last equality. This directly verifies \eqref{eq:green_rep}.
Notably, this verification holds for all test functions $\phi_{\intpt} \in \testFuncSet$ constructed via Construction \ref{con:test-function}.

\section{Green-representable sub- and super-solutions}\label{sec:generalized_upper_lower_solutions}
In this section, we introduce the notion of sub- and super-solutions within the Green-representable framework. Our goal is to enclose the actual solution of \eqref{eq:main} between a super-solution and a sub-solution over the entire domain. Accordingly, we focus on solutions that are globally Green-representable with respect to \(\testFuncMap\), i.e., those with the regularity \(u \in W^{1,q}(\Omega)\) for \(q > N\).
In Subsection~\ref{subsec:greensupersub-def}, we formalize the definitions of \(\testFuncMap\)-Green-representable sub- and super-solutions, which are essential for deriving rigorous solution bounds.
Subsection~\ref{subsec:comparison-thm} establishes a comparison theorem that guarantees the enclosure property.
Finally, Subsection~\ref{subsec:practical-remarks} discusses practical implications and the role of boundary conditions in our framework.

\subsection{Definitions of Green-representable sub- and super-solutions}\label{subsec:greensupersub-def}
We begin by formalizing the definitions of \(\testFuncMap\)-Green-representable sub- and super-solutions,
which are key to obtaining rigorous solution bounds.

\begin{definition}[Green-representable super-solution]\label{def:general_super}
A function $\overline{u} \in W^{1,q}(\Omega)$ $(q>N)$ is called a Green-representable super-solution of \eqref{eq:main} with respect to $\testFuncMap$ (or simply a \(\testFuncMap\)-Green-representable super-solution) if there exists a nonnegative constant $c$ and a mapping $\testFuncMap: \intpt \mapsto \phi_{\intpt}$ that assigns to each point $\intpt \in \Omega$ a test function $\phi_{\intpt}$ of the form \eqref{eq:test-function} (see Construction \ref{con:test-function}) such that
\begin{equation}
\label{eq:green_representable_super_ineq}
\left\langle\nabla\overline{u},\nabla\phi_{\intpt}\right\rangle
 \geq \langle f,\phi_{\intpt}\rangle + \int_{\partial\Omega}\overline{u}\frac{\partial\phi_{\intpt}}{\partial n}d\belem
\end{equation}
and
\begin{equation}
\label{eq:boundary_condition_super}
\overline{u} \geq 0 \quad \text{ on } \; \partial\Omega.
\end{equation}
\end{definition}
A \(\testFuncMap\)-Green-representable sub-solution $\underline{u}\in W^{1,q}(\Omega)$ $(q>N)$ is defined analogously by reversing all inequalities: there exists a mapping $\testFuncMap: \intpt \mapsto \phi_{\intpt}$ that assigns to each point $\intpt \in \Omega$ a test function $\phi_{\intpt}$ of the form \eqref{eq:test-function}
\begin{equation}
\label{eq:green_representable_sub_ineq}
\left\langle\nabla\underline{u},\nabla\phi_{\intpt}\right\rangle 
\leq \langle f,\phi_{\intpt}\rangle + \int_{\partial\Omega}\underline{u}\frac{\partial\phi_{\intpt}}{\partial n}d\belem
\end{equation}
and
\begin{equation}
\label{eq:boundary_condition_sub}
\underline{u} \leq 0 \quad \text{ on } \; \partial\Omega.
\end{equation}

Although the $W^{1,q}(q>N)$ regularity requirement for sub- and super-solutions may appear stronger than that of classical sub- and super-solutions defined in \eqref{eq:classical_super}, this is compensated for by restricting attention to a much smaller class of test functions. Specifically, the admissible test functions $\phi_{\intpt}$ are considerably more specialized than the test functions $\phi \in H^1_0(\Omega) \cap L^2_+(\Omega)$ used in \eqref{eq:classical_super}, since the range of $\testFuncMap$ is a strict subset of $H^1_0(\Omega)$. In the one-dimensional case, as shown later, this effectively reduces the problem to a single-variable setting.
This trade-off proves advantageous. As demonstrated in Section~\ref{sec:onedim}, the proposed framework enables the construction of sub- and super-solutions using piecewise linear functions, which is not possible within the classical framework.

Another key distinction from the classical definition \eqref{eq:classical_super} is the presence of the boundary integral term $\int_{\partial\Omega}\overline{u}\frac{\partial\phi_{\intpt}}{\partial n}d\belem$. In \eqref{eq:classical_super}, increasing the values of $\overline{u}$ on the boundary does not affect the defining inequality. In contrast, the present definition introduces a trade-off. Larger boundary values of $\overline{u}$ indicate a greater deviation from the true solution, but they also make the inequality \eqref{eq:green_representable_super_ineq} easier to satisfy, provided that $\frac{\partial\phi_{\intpt}}{\partial n} \leq 0$ on $\partial\Omega$.

\subsection{Comparison theorem}\label{subsec:comparison-thm}
We now establish a comparison theorem that characterizes the relationship between $\testFuncMap$-Green-representable sub- and super-solutions and the true solution $u$. The central observation underlying this theorem is that the true solution $u$ is itself $\testFuncMap$-Green-representable with respect to the same mapping $\testFuncMap$. This shared representability allows inequalities to be derived by comparing \eqref{eq:green_rep} with \eqref{eq:green_representable_super_ineq} and the corresponding sub-solution inequality.

\begin{theorem}\label{thm:comparison_h10}
Assume that there exists a mapping $\testFuncMap$ for which a solution $u \in H^1_0(\Omega)\cap W^{1,q}(\Omega)$ $(q>N)$ of \eqref{eq:main} is $\testFuncMap$-Green-representable, where $\phi_{\intpt}=\testFuncMap(\intpt)$.
For this same mapping $\testFuncMap$,
let $\overline{u}$ and $\underline{u}$ be a $\testFuncMap$-Green-representable super-solution and sub-solution of \eqref{eq:main}, respectively.
Then,
\begin{equation}\label{eq:comparison_ineq}
\underline{u} - \gamma_{\intpt} \leq u \leq \overline{u} + \gamma_{\intpt} \quad \mathrm{in} \; \Omega,
\end{equation}
where 
\begin{equation}\label{eq:gamma_def}
\gamma_{\intpt} := \frac{1}{\intcoef}\left\langle\frac{\partial u}{\partial n}, \gamma(\phi_{\intpt})\right\rangle_{H^{-1/2}, H^{1/2}}.
\end{equation}
\end{theorem}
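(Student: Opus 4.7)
The plan is to convert the defining sub/super-solution inequalities into pointwise comparisons with $u$ by evaluating the coupling $\langle \nabla \cdot, \nabla \phi_{\intpt}\rangle$ against $u$, $\overline{u}$, and $\underline{u}$ simultaneously, exploiting the fact that $-\Delta \phi_{\intpt} = \intcoef\,\delta(\cdot - \intpt)$ distributionally. Fix $\intpt \in \Omega$ and set $\phi_{\intpt} := \testFuncMap(\intpt)$. Because all three functions lie in $W^{1,q}(\Omega)$ with $q > N$, the coupling is well-defined as a Lebesgue integral by the H\"older argument in Lemma~\ref{lem:coupling_interpretation} and Subsection~\ref{subsec:testfunctions}, and the identity~\eqref{eq:coupling_def} applies. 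In particular, for each $w \in \{u,\overline{u},\underline{u}\}$,
\begin{equation*}
\langle \nabla w, \nabla \phi_{\intpt}\rangle = \intcoef\, w(\intpt) + \int_{\partial\Omega} w\, \frac{\partial \phi_{\intpt}}{\partial n}\, d\belem.
\end{equation*}

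Substituting this for $w = \overline{u}$ into the super-solution inequality \eqref{eq:green_representable_super_ineq}, the boundary integrals on the two sides cancel exactly, leaving $\intcoef\, \overline{u}(\intpt) \geq \langle f, \phi_{\intpt}\rangle$. Since $u$ is $\testFuncMap$-Green-representable, \eqref{eq:green_rep} rearranges to $\langle f, \phi_{\intpt}\rangle = \intcoef\, u(\intpt) - \intcoef\, \gamma_{\intpt}$. Combining these two relations and dividing by $\intcoef > 0$ (cf.\ Construction~\ref{con:test-function}) produces $u(\intpt) \leq \overline{u}(\intpt) + \gamma_{\intpt}$. The matching lower bound is obtained by the mirror-image manipulation of \eqref{eq:green_representable_sub_ineq}: the same cancellation of boundary integrals gives $\intcoef\, \underline{u}(\intpt) \leq \langle f, \phi_{\intpt}\rangle$, and substituting the representation of $\langle f,\phi_{\intpt}\rangle$ yields the lower bound in \eqref{eq:comparison_ineq}. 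Since $\intpt \in \Omega$ is arbitrary, the pointwise enclosure holds throughout $\Omega$. The critical role of the hypothesis that $u$ and $\overline{u}, \underline{u}$ share the \emph{same} mapping $\testFuncMap$ is precisely to make the boundary integrals cancel in this step.

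The principal technical obstacle is rigorously establishing the identity \eqref{eq:coupling_def} for $\overline{u}$ and $\underline{u}$. These functions are not in $H^1_0(\Omega)$, so the argument used for the true solution in Theorem~\ref{thm:local_green_rep}, which relied on a compactly supported cutoff of $\phi_{\intpt}$ absorbed by the vanishing trace, must be adapted. I would excise a ball $B_{\varepsilon}(\intpt)$ around the singular support of $\phi_{\intpt}$, apply the classical Green's identity on the punctured domain $\Omega \setminus B_{\varepsilon}(\intpt)$ where $\phi_{\intpt}$ is $C^2$ and harmonic, and show that the sphere integral on $\partial B_{\varepsilon}(\intpt)$ converges to $\intcoef\, w(\intpt)$ as $\varepsilon \to 0$. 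This last limit uses the continuity of $w$ at $\intpt$, available via Morrey's embedding $W^{1,q}(\Omega) \hookrightarrow C^{0,\alpha}(\overline{\Omega})$ for $q > N$, together with the normalization property that the flux of $\fundsol(\intpt,\cdot)$ over a small sphere equals $1$. The residual volume-interior contributions over $\Omega \setminus B_{\varepsilon}(\intpt)$ are controlled by dominated convergence, as in Subsection~\ref{subsec:testfunctions}. The boundary-condition requirements~\eqref{eq:boundary_condition_super} and \eqref{eq:boundary_condition_sub} do not enter the cancellation step itself; they ensure instead that the enclosure is consistent with the homogeneous Dirichlet trace $u|_{\partial\Omega} = 0$ when $\intpt$ approaches $\partial\Omega$.
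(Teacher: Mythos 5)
Your argument is essentially the paper's own proof: substitute the coupling identity \eqref{eq:coupling_def} into \eqref{eq:green_representable_super_ineq} so that the boundary integrals cancel, obtaining $\intcoef\,\overline{u}(\intpt) \geq \langle f,\phi_{\intpt}\rangle$, combine this with the Green representation \eqref{eq:green_rep} of $u$, argue symmetrically for the sub-solution, and let $\intpt$ range over $\Omega$. The only deviation is your final excision argument for $\overline{u}$ and $\underline{u}$, which is unnecessary within the paper's conventions because \eqref{eq:coupling_def} is the \emph{definition} of the coupling for $W^{1,q}$ functions (the Lebesgue-integral interpretation being treated separately in Lemma \ref{lem:coupling_interpretation} and Subsection \ref{subsec:testfunctions}), so your proposal reproduces the paper's proof.
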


\begin{proof}
We first recall a necessary identity.
From \eqref{eq:nabla_u_phi_coupling}, for any $\intpt \in \Omega$,
\begin{equation}
\intcoef\overline{u}(\intpt) = \langle \nabla \overline{u}, \nabla \phi_{\intpt} \rangle - \int_{\partial\Omega}\overline{u}\frac{\partial\phi_{\intpt}}{\partial n}d\belem.
\end{equation}
Using \eqref{eq:green_representable_super_ineq}, it follows that
\begin{align*}
\intcoef\overline{u}(\intpt) &= \langle \nabla \overline{u}, \nabla \phi_{\intpt} \rangle - \int_{\partial\Omega}\overline{u}\frac{\partial\phi_{\intpt}}{\partial n}d\belem \geq \langle f,\phi_{\intpt} \rangle,
\end{align*}
while from \eqref{eq:green_rep}
\begin{equation}
\intcoef u(\intpt) = \langle f,\phi_{\intpt} \rangle + \left\langle\frac{\partial u}{\partial n}, \gamma(\phi_{\intpt})\right\rangle_{H^{-1/2}, H^{1/2}}.
\end{equation}
Combining these relations yields,
\begin{align*}
\intcoef(\overline{u}(\intpt)-u(\intpt)) \geq -\left\langle\frac{\partial u}{\partial n}, \gamma(\phi_{\intpt})\right\rangle_{H^{-1/2}, H^{1/2}}.
\end{align*}
An analogous argument applies to the sub-solution case. Using \eqref{eq:green_representable_sub_ineq}, \eqref{eq:boundary_condition_sub}, and following the same arguments as above, we obtain the lower bound in \eqref{eq:comparison_ineq}.
Since the resulting inequality holds for every $\intpt \in \Omega$, the proof is complete.
\end{proof}

\subsection{Practical implications and boundary conditions}\label{subsec:practical-remarks}
The sharpness of the enclosure \eqref{eq:comparison_ineq} depends on the sign of \(\gamma_{\intpt}\) in \eqref{eq:gamma_def}. When \(\gamma_{\intpt} \leq 0\), the bounds reduce to \(\underline{u} \leq u \leq \overline{u}\), yielding a gap-free pointwise enclosure. The ideal case \(\gamma_{\intpt} = 0\) occurs when the test function \(\phi_{\intpt}\) vanishes on \(\partial\Omega\), thereby eliminating the boundary integral contribution. In practice, such an exact construction is feasible mainly in one-dimensional domains (see Section~\ref{sec:onedim}).
For higher-dimensional domains, determining the sign of \(\gamma_{\intpt}\) is more challenging because it involves the boundary integral \(\langle\frac{\partial u}{\partial n}, \gamma(\phi_{\intpt})\rangle_{H^{-1/2}, H^{1/2}}\). Classical tools such as Hopf's lemma require stronger smoothness assumptions on the solution and the domain, which may not hold in the present setting. Instead, we adopt a weak formulation approach that avoids imposing such regularity conditions.Consider the auxiliary problem of finding \(v \in H^1(\Omega)\) such that
\begin{equation}\label{eq:auxiliary_bvp}
\left\{\begin{array}{l}
(\nabla v,\nabla\psi) = 0 \quad \text{for all } \psi\in H_{0}^{1}(\Omega),\\
v = \gamma(\phi_{\intpt}) \quad \text{on } \partial\Omega.
\end{array}\right.
\end{equation}
By the Lax-Milgram theorem, this problem admits a unique solution since \(\gamma(\phi_{\intpt}) \in H^{1/2}(\partial\Omega)\). Taking \(\psi = u\) yields \((\nabla v, \nabla u) = 0\), and integration by parts with \(u = 0\) on \(\partial\Omega\) yields
\[
\left\langle\frac{\partial u}{\partial n}, \gamma(\phi_{\intpt})\right\rangle_{H^{-1/2}, H^{1/2}} = -\int_{\Omega} f v \,dx.
\]
If \(f \geq 0\) in \(\Omega\) and \(\gamma(\phi_{\intpt}) \geq 0\) on \(\partial\Omega\), the weak maximum principle ensures \(v \geq 0\) in \(\Omega\), hence \(\gamma_{\intpt} \leq 0\) when \(\intcoef > 0\). Although the condition \(f \ge 0\) may appear restrictive, a general \(f\) source term can be handled by decomposition:
\begin{equation} \label{eq:decomposition}
\left\{
\begin{aligned}
-\Delta u_1 &= f - \min(f, 0), \\
-\Delta u_2 &= \min(f, 0),
\end{aligned}
\right.
\quad
\text{so that }
u = u_1 + u_2.
\end{equation}
More generally, one may choose any \(g \geq 0\) such that \(f - g \geq 0\) and split the equation accordingly. This flexibility is useful in practice, as it allows the decomposition to be tailored to the structure of the source term. For the lower bound \(\underline{u} \leq u\), the sign condition is reversed, requiring \(\gamma(\phi_{\intpt}) \leq 0\) on \(\partial\Omega\). In practical computations, one typically constructs \(\phi_{\intpt}\) to minimize its boundary values and then shifts it appropriately to enforce the desired sign. The construction of suitable test functions is described in Subsection~\ref{subsec:test_function_construction}.

\section{Application to One-dimensional Problems}\label{sec:onedim}
In this section, we apply the theory developed in the previous sections to the one-dimensional case with $\Omega = (0,1)$. This choice entails no loss of generality, as any bounded interval can be transformed into this setting by scaling.
We first show that the solution representation theory from Section~\ref{sec:rep-green} recovers the classical Green's function in one dimension. We then specialize the sub- and super-solution framework to this setting, derive explicit characterizations that accommodate discontinuous functions---a case that is difficult to treat within the conventional variational approach.
In Subsection~\ref{subsec:1d-testfunc}, we construct test functions explicitly and establish the solution representation formula.
Subsection~\ref{subsec:1d-subsupersol} specializes the Green-representable sub- and super-solution framework to one dimension, providing explicit conditions.
Finally, in Subsection~\ref{subsec:1d-numerical}, we present numerical experiments that validate our theoretical results and demonstrate the effectiveness of the proposed algorithm for constructing solution enclosures.

\subsection{Solution representation and test function construction}\label{subsec:1d-testfunc}

For this domain, the fundamental solution takes the explicit form
\begin{equation}
\fundsol_s(x)=-\frac{|x-s|}{2}, \quad 0 < s < 1.
\end{equation}
In the one-dimensional setting, we adopt a simplified notation in which the evaluation point is denoted by $s$ rather than $s_{\intpt}$, as all necessary exterior points can be explicitly determined from a single interior point. Specifically, we can construct a test function $\phi_s$ that exactly satisfies the boundary conditions as a linear combination of $\fundsol_{s}(x)$, $\fundsol_{-s}(x)$, and $\fundsol_{2-s}(x)$
\begin{align}
\label{eq:phi_explicit}
\phi_s(x)&=2\left (\fundsol_s(x)-\frac{\fundsol_s(x)-\fundsol_{-s}(x)}{\fundsol_s(1)-\fundsol_{-s}(1)} \fundsol_s(1)-\frac{\fundsol_s(x)-\fundsol_{2-s}(x)}{\fundsol_s(0)-\fundsol_{2-s}(0)}\fundsol_s(0)\right )\nonumber\\
&=\frac{1}{s(1-s)}\fundsol_{s}(x)+a_{-s}\fundsol_{-s}(x)+a_{2-s}\fundsol_{2-s}(x).
\end{align}
Here, the coefficients are explicitly given by
\begin{equation*}
a_{-s} = 2\,\frac{\fundsol_s(1)}{\fundsol_s(1) - \fundsol_{-s}(1)}, \quad a_{2-s} = 2\,\frac{\fundsol_s(0)}{\fundsol_s(0) - \fundsol_{2-s}(0)}.
\end{equation*}
Following the notation in \eqref{eq:test-function} and \eqref{eq:harmonic-part}, this construction corresponds to choosing $\intcoef = \frac{1}{s(1-s)}$, $s_1 = -s$, $s_2 = 2-s$, $a_1 = a_{-s}$, $a_2 = a_{2-s}$, and $C = 0$. These parameters are uniquely determined by $s$ (where $s = \intpt$) to satisfy the homogeneous boundary conditions.

While this construction might appear rather technical, $\phi_s$ has a simple geometric interpretation. It is a piecewise linear function on $(0,s)$ and $(s,1)$ satisfying the interpolation conditions $\phi_s(s)=1$ and $\phi_s(0)=\phi_s(1)=0$.
Multiplying $\phi_s$ by $\intcoef^{-1} = s(1-s)$ recovers the classical one-dimensional Green's function
\begin{equation}
G_s(x) = \intcoef^{-1}\phi_s(x) = \begin{cases}
s(1-x), & x \geq s \\
x(1-s), & x < s.
\end{cases}
\end{equation}
This function $\phi_s$ therefore serves as a representation kernel for the solution value $u(s)$, effectively playing the role of a Green's function, as shown in the following theorem.

\begin{theorem}
\label{thm:one_dim}
For the mapping $\testFuncMap: s \mapsto \phi_{s}$ constructed by \eqref{eq:phi_explicit}, a solution $u \in H^1_0(\Omega)$ of \eqref{eq:main} is globally Green-representable with respect to $\testFuncMap$, and for all $s \in \Omega$, we have
\begin{equation}
\label{eq:one_dim_sol}
\intcoef u(s)=\langle f, \phi_s \rangle.
\end{equation}
\end{theorem}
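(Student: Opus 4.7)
The plan is to proceed directly through the weak formulation of \eqref{eq:main}, exploiting the explicit piecewise-linear structure of $\phi_s$ rather than invoking Theorem~\ref{thm:local_green_rep}, which is stated for $N \geq 2$. First I would confirm that $\phi_s \in H^1_0(\Omega)$ by checking $\phi_s(0) = \phi_s(1) = 0$; this is essentially built into the construction, since the coefficients $a_{-s}$ and $a_{2-s}$ in \eqref{eq:phi_explicit} are chosen precisely to enforce the homogeneous Dirichlet boundary values, and $\phi_s$ is piecewise linear hence in $H^1$.

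With $\phi_s$ admissible as a test function, substituting $v = \phi_s$ in \eqref{eq:main} yields $\int_0^1 u' \phi_s'\, dx = \langle f, \phi_s \rangle$. The core of the proof is then to evaluate the left-hand side directly. Since $\phi_s'$ is piecewise constant with values $1/s$ on $(0,s)$ and $-1/(1-s)$ on $(s,1)$, I would split the integral at $x = s$ and apply the fundamental theorem of calculus on each subinterval. Using $u(0) = u(1) = 0$, both pieces reduce to contributions proportional to $u(s)$, and summing gives $u(s)\bigl(1/s + 1/(1-s)\bigr) = \intcoef u(s)$, which is exactly \eqref{eq:one_dim_sol}.

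To complete the global Green-representability claim, two remaining items must be checked: the regularity requirement $u \in W^{1,q}(\Omega)$ with $q > N = 1$ in Definition~\ref{def:green_representable_global} is automatic, since $H^1_0(\Omega) = W^{1,2}(\Omega)$ and $2 > 1$; and the identity \eqref{eq:green_rep} from Definition~\ref{def:green_representable_local} reduces to \eqref{eq:one_dim_sol} because $\gamma(\phi_s) = 0$ forces the boundary pairing to vanish. Since the identity holds for every $s \in \Omega$ with the same mapping $\testFuncMap$, global Green-representability follows.

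The only subtle point is justifying the splitting at $x = s$ for a function $u$ of merely $H^1$ regularity. In one dimension this is not a genuine obstacle: the embedding $H^1(0,1) \hookrightarrow C^{0,1/2}([0,1])$ makes $u$ absolutely continuous with well-defined pointwise values, so the fundamental theorem of calculus applies on each subinterval and the evaluation $u(s)$ is unambiguous. This one-dimensional simplification is precisely what allows the clean identity \eqref{eq:one_dim_sol} to hold with no boundary correction term, in sharp contrast to the higher-dimensional situation where $\gamma_{\intpt}$ in Theorem~\ref{thm:comparison_h10} generally does not vanish.
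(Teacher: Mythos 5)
Your proof is correct and follows essentially the same route as the paper: both rest on the fact that $\phi_s \in H^1_0(\Omega)$ so it can be inserted directly into the weak formulation, with the boundary pairing vanishing because $\gamma(\phi_s)=0$, and both note that $u \in H^1_0(\Omega)=W^{1,2}(\Omega)$ with $2>N=1$ supplies the $W^{1,q}$ regularity required for global Green-representability. Your explicit fundamental-theorem-of-calculus evaluation of $\int_0^1 u'\phi_s'\,dx = \intcoef u(s)$ merely makes concrete the identification that the paper delegates to the coupling definition \eqref{eq:coupling_def}, and your choice of $q=2$ is in fact more careful than the paper's (slightly overstated) remark that $H^1(\Omega)\hookrightarrow W^{1,q}(\Omega)$ for all $q\geq 1$.
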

\begin{proof}
The boundary integral term in \eqref{eq:green_rep} vanishes because $\phi_s$ satisfies the homogeneous boundary conditions by construction. 
Consequetly, \eqref{eq:one_dim_sol} follows immediately, establishing the $\testFuncMap$-Green-representability of $u$ in the sense of Definition \ref{def:green_representable_global}.
In the one-dimensional setting, the embedding $H^1(\Omega) \hookrightarrow W^{1,q}(\Omega)$ holds for all $q \geq 1$, so the solution $u$ automatically satisfies the $W^{1,q}$-regularity requirement.
\end{proof}
\begin{remark}
\label{rem:general_1d}
Because $W^{1,q}$-regularity holds for all $q\geq 1$, in one dimension, Theorem \ref{thm:one_dim} extends beyond solutions of \eqref{eq:main}. In fact, for any $u \in H^1(\Omega)$, applying \eqref{eq:coupling_def} to the test function $\phi_s$ constructed above yields 
\begin{equation*}
\intcoef u(s)=\langle u', \phi_s' \rangle+\phi_s'(0)u(0)-\phi_s'(1)u(1).
\end{equation*}
which provides a representation formula for general $H^1$ functions.
\end{remark}

\subsection{Green-representable sub- and super-solutions}\label{subsec:1d-subsupersol}
We now specialize the theory of sub- and super-solutions introduced in Definition~\ref{def:general_super} to the one-dimensional case. Throughout this subsection, we denote by $\testFuncMap: s \mapsto \phi_{s}$ the mapping constructed by \eqref{eq:phi_explicit}.

In one dimension, with $\Omega = (0,1)$, the Green-representable super-solution condition \eqref{eq:green_representable_super_ineq} becomes
\begin{equation}
\label{eq:general_1d_before}
\int_0^1 \overline{u}'(x)\phi_s'(x)\,dx \geq \int_0^1 f(x)\phi_s(x)\,dx + \overline{u}(1)\frac{\partial\phi_s}{\partial n}\Big|_{x=1} + \overline{u}(0)\frac{\partial\phi_s}{\partial n}\Big|_{x=0}.
\end{equation}
Since $\phi_s$ is piecewise linear with $\phi_s(s) = 1$ and $\phi_s(0) = \phi_s(1) = 0$, the derivatives are
\begin{align}
\phi_s'(0) &= \frac{\phi_s(s) - \phi_s(0)}{s - 0} = \frac{1 - 0}{s} = \frac{1}{s}, \\
\phi_s'(1) &= \frac{\phi_s(1) - \phi_s(s)}{1 - s} = \frac{0 - 1}{1 - s} = -\frac{1}{1-s}.
\end{align}
The outward normal derivatives are $\frac{\partial\phi_s}{\partial n}\big|_{x=0} = -\phi_s'(0)$ and $\frac{\partial\phi_s}{\partial n}\big|_{x=1} = \phi_s'(1)$. Consequently, the boundary contribution becomes
\begin{equation}
\overline{u}(1)\frac{\partial\phi_s}{\partial n}\Big|_{x=1} + \overline{u}(0)\frac{\partial\phi_s}{\partial n}\Big|_{x=0} = \overline{u}(1)\phi_s'(1) - \overline{u}(0)\phi_s'(0) = -\frac{\overline{u}(1)}{1-s} - \frac{\overline{u}(0)}{s}.
\end{equation}
For practical applications, we assume $\overline{u}(0) = \overline{u}(1) = c$. Then
\begin{equation}
-\frac{\overline{u}(1)}{1-s} - \frac{\overline{u}(0)}{s} = -\frac{c}{1-s} - \frac{c}{s} = -c\intcoef.
\end{equation}
This observation allows us to relate the general boundary condition \eqref{eq:boundary_condition_super} to our one-dimensional formulation. Combining this result with \eqref{eq:general_1d_before}, we obtain
\begin{equation}
\int_0^1 \overline{u}'(x)\phi_s'(x)\,dx \geq \int_0^1 f(x)\phi_s(x)\,dx - c\intcoef,
\end{equation}
which is equivalent to \eqref{eq:SS1} below.

By the one-dimensional Sobolev embedding, any function in $H^1(\Omega)$ belongs to $W^{1,q}(\Omega)$ for all $q \geq 1$. Consequently, the conditions for Green-representable sub- and super-solutions take an explicit form in one-dimensional case.

\begin{corollary}[One-dimensional characterization]\label{cor:1d_characterization}
Let $\Omega = (0,1)$ and let $\testFuncMap: s \mapsto \phi_s$ be the mapping constructed by \eqref{eq:phi_explicit}.
\begin{enumerate}
\item A function $\overline{u} \in H^1(\Omega)$ is a $\testFuncMap$-Green-representable super-solution if there exists a nonnegative constant $c$ such that
\begin{equation}
\label{eq:SS1}
\int_0^1 \overline{u}'(x)\phi_s'(x)\,dx + c\intcoef \geq \int_0^1 f(x)\phi_s(x)\,dx \quad \text{ for all } \; s\in (0,1)
\end{equation}
and
\begin{equation}
\label{eq:SS2}
\overline{u} - c \geq 0 \quad \text{ on } \; \partial\Omega.
\end{equation}

\item A function $\underline{u} \in H^1(\Omega)$ is a $\testFuncMap$-Green-representable sub-solution if there exists a nonnegative constant $c$ such that
\begin{equation}
\label{eq:SS3}
\int_0^1 \underline{u}'(x)\phi_s'(x)\,dx - c\intcoef \leq \int_0^1 f(x)\phi_s(x)\,dx \quad \text{ for all } \; s\in (0,1)
\end{equation}
and
\begin{equation}
\label{eq:SS4}
\underline{u} + c \leq 0 \quad \text{ on } \; \partial\Omega.
\end{equation}
\end{enumerate}
\end{corollary}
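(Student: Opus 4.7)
The plan is to show that the explicit one-dimensional conditions in Corollary~\ref{cor:1d_characterization} imply the general Green-representable super- and sub-solution requirements in Definition~\ref{def:general_super}. First, I would dispose of the regularity hypothesis: since $\Omega=(0,1)$ is bounded and $N=1$, H\"older's inequality yields $H^1(\Omega)=W^{1,2}(\Omega)\subset W^{1,q}(\Omega)$ for every $q\in(1,2]$, so the assumption $\overline{u}\in H^1(\Omega)$ already places $\overline{u}$ in $W^{1,q}(\Omega)$ for some $q>N$, as required.

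Next, I would evaluate the boundary flux term appearing in \eqref{eq:green_representable_super_ineq}. From \eqref{eq:phi_explicit}, $\phi_s$ is piecewise linear on $(0,s)\cup(s,1)$ with $\phi_s(0)=\phi_s(1)=0$ and $\phi_s(s)=1$, so $\phi_s'(0^+)=1/s$ and $\phi_s'(1^-)=-1/(1-s)$. The outward normal derivatives are therefore $(\partial\phi_s/\partial n)(0)=-1/s$ and $(\partial\phi_s/\partial n)(1)=-1/(1-s)$, both strictly negative. Because $c\geq 0$ and \eqref{eq:SS2} gives $\overline{u}(0),\overline{u}(1)\geq c$, the negativity of the normal derivatives yields
\begin{equation*}
\int_{\partial\Omega}\overline{u}\,\frac{\partial\phi_s}{\partial n}\,d\belem
= -\frac{\overline{u}(0)}{s}-\frac{\overline{u}(1)}{1-s}
\leq -c\left(\frac{1}{s}+\frac{1}{1-s}\right)
= -c\,\intcoef,
\end{equation*}
where the last equality uses the identity $\intcoef=1/(s(1-s))$.

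Substituting this bound into the hypothesis \eqref{eq:SS1} would then give
\begin{equation*}
\int_0^1 \overline{u}'(x)\phi_s'(x)\,dx
\geq \int_0^1 f(x)\phi_s(x)\,dx - c\,\intcoef
\geq \int_0^1 f(x)\phi_s(x)\,dx + \int_{\partial\Omega}\overline{u}\,\frac{\partial\phi_s}{\partial n}\,d\belem,
\end{equation*}
which is exactly \eqref{eq:green_representable_super_ineq}. The pointwise boundary condition \eqref{eq:boundary_condition_super} follows immediately from $\overline{u}\geq c\geq 0$ at both endpoints. For the sub-solution case I would mirror the argument: \eqref{eq:SS4} forces $\underline{u}(0),\underline{u}(1)\leq -c$, so the negative normal derivatives now produce the reversed bound $\int_{\partial\Omega}\underline{u}\,(\partial\phi_s/\partial n)\,d\belem \geq c\,\intcoef$, and combining with \eqref{eq:SS3} yields \eqref{eq:green_representable_sub_ineq}; the condition \eqref{eq:boundary_condition_sub} follows from $\underline{u}\leq -c\leq 0$.

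The bulk of the work is bookkeeping---keeping the signs of the outward normal derivatives consistent and using $\intcoef=1/(s(1-s))$ to convert between the scalar $-c\,\intcoef$ and the endpoint-flux sum. I do not expect any analytical obstacle; the argument is essentially a direct specialization of Definition~\ref{def:general_super} to the piecewise-linear kernel $\phi_s$ of \eqref{eq:phi_explicit}, with the key non-obvious point being that the two boundary derivatives of $\phi_s$ are \emph{both} negative, which is what makes the uniform endpoint condition $\overline{u}\geq c$ sufficient (rather than requiring $\overline{u}=c$ exactly as suggested by the simplifying assumption preceding the corollary).
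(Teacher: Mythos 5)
Your proof is correct and follows essentially the same route as the paper, which establishes the corollary in the text immediately preceding it by computing $\phi_s'(0)=1/s$, $\phi_s'(1)=-1/(1-s)$, identifying the outward normal derivatives as $-1/s$ and $-1/(1-s)$, and using $1/s+1/(1-s)=\intcoef$ to convert the boundary contribution into $-c\,\intcoef$. The one step you treat more completely is the case $\overline{u}\geq c$ (rather than $\overline{u}=c$) on $\partial\Omega$: the paper's displayed derivation assumes equal boundary values and only notes informally (in Subsection~\ref{subsec:greensupersub-def}) that negativity of $\partial\phi_s/\partial n$ makes larger boundary values harmless, which is precisely the monotonicity argument you spell out.
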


The constant $c$ acts as a boundary shift parameter: condition \eqref{eq:SS2} is a direct translation of the general boundary condition \eqref{eq:boundary_condition_super}, while the term $c\intcoef$ in \eqref{eq:SS1} arises from the explicit evaluation of the boundary integral. A key advantage of this formulation is that $\phi_s$ satisfies homogeneous boundary conditions, so $\gamma_s = 0$ in Theorem~\ref{thm:comparison_h10}, yielding a particularly clean comparison result.

\begin{remark}[Optimality of Constant Sub- and Super-solutions]
\label{rem:optimal}
Recall that $\intcoef$ depends on $s(=\intpt)$ through the construction of $\phi_s$ given in \eqref{eq:phi_explicit}. We consider constant super-solutions of the form $\overline{u}=c$. From \eqref{eq:SS1}, such a constant $c$ must satisfy 
\begin{equation*}
c\intcoef(s) \geq \langle f,\phi_s \rangle \quad \text{ for all } s\in \Omega.
\end{equation*}
This observation motivates the choice
\begin{equation}
\label{eq:optimal_c}
c = \sup_{s\in \Omega}\frac{\langle f,\phi_s \rangle}{\intcoef(s)}
\end{equation}
which yields the smallest admissible constant super-solution. On the other hand, by Theorem~\ref{thm:one_dim}, the solution $u$ satisfies
\begin{equation*}
\intcoef(s)u(s) = \langle f,\phi_s \rangle \quad \text{ for all } s\in \Omega,
\end{equation*}
and therefore
\begin{equation*}
\sup_{s\in \Omega}\frac{\langle f,\phi_s \rangle}{\intcoef(s)} = \sup_{s\in \Omega}u(s).
\end{equation*}
Consequently, the constant function $\overline{u} = \displaystyle\sup_{s\in\Omega} u(s)$ satisfies the super-solution condition \eqref{eq:SS1}. Moreover, any constant smaller than this value fails to satisfy \eqref{eq:SS1} for some $s \in \Omega$; that is, it is the optimal (i.e., the smallest) constant super-solution.
To illustrate this optimality, consider the case $f(x)=A>0$ for all $x\in \Omega$. A direct calculation shows that the exact solution is $u(x)=Ax(1-x)/2$ with maximum value $u(1/2)=A/8$. The bound given by \eqref{eq:optimal_c} recovers this value exactly, demonstrating the sharpness of our approach even for simple examples.

By an analogous argument, the constant function $\underline{u} = \displaystyle\inf_{s\in\Omega} u(s)$ is the optimal (i.e., largest) constant sub-solution. It satisfies \eqref{eq:SS3}, and any constant larger than this value violates the sub-solution condition.
\end{remark}

\subsection{Numerical experiments}\label{subsec:1d-numerical}

In this subsection, we present numerical experiments to evaluate the performance of the theoretical framework and to demonstrate the effectiveness of the proposed method for constructing sub- and super-solutions. We focus on the domain $\Omega = (0, 1)$ and examine how the accuracy of the solution enclosure depends on different source terms $f$ and parameter choices. These experiments provide insight into the practical implementation of our theoretical results and verify the sharpness of the error bounds we derive.

We begin by introducing an algorithm for constructing Green-representable sub- and super-solutions. The algorithm is based on a finite difference discretization of the Poisson equation, with particular attention to ensuring that the resulting functions satisfy the conditions in Definition \ref{def:general_super} (specifically, the inequalities \eqref{eq:SS1} and \eqref{eq:SS2}).

Before presenting the algorithm, we introduce the key notation and parameters used throughout this subsection:
\begin{itemize}
    \item $I_i = [x_i, x_{i+1}]$ denotes the $i$-th subinterval of the mesh, where $x_i$ and $x_{i+1}$ are consecutive mesh points. The super-solution condition is tested within this interval.
    \item $\overline{f}_i$ denotes the discretized right-hand side at mesh point $x_i$. When the super-solution condition fails in the interval $I_i$, both $\overline{f}_i$ (at the left endpoint) and $\overline{f}_{i+1}$ (at the right endpoint) are incremented by $\varepsilon$.
    \item $A$ denotes the discretized Laplacian matrix for interior points, and $A^{-1}$ refers to its inverse operator, which can be implemented using either finite difference or finite element methods. In our implementation, we use a simple one-dimensional finite difference method.
    \item $c$ is the boundary shift constant introduced in Definition \ref{def:general_super}, which plays a central role in balancing the tightness of the solution enclosure and the satisfaction of boundary conditions.
    \item $\varepsilon$ denotes a small perturbation parameter that controls the local adjustment of the discretized right-hand side. In our implementation, we set $\varepsilon = O(h) \cdot |f|$, where $|f|$ denotes the magnitude of the source term. This scaling ensures that the perturbation is proportional to both the mesh size and the source term.
\end{itemize}

\begin{algorithm}
\caption{Construction of Green-representable Super-solutions}
\label{alg:super_solution}
\begin{algorithmic}[1]
    \State Initialize $\overline{\mathbf{u}} \gets (c, A^{-1}\overline{\mathbf{f}} + c, c)$, where $A$ is the discretized Laplacian matrix for interior points
    \Repeat
        \State $\mathrm{updated} \gets \mathrm{false}$
        \For{$i=1,2,\ldots,n$}
            \If{$\langle\overline{u}',\phi_{s}'\rangle+c\intcoef<\langle f,\phi_{s}\rangle$ for some $s \in I_{i}$}
                \State $\overline{f}_{i} \gets \overline{f}_{i}+\varepsilon$
                \State $\overline{f}_{i+1} \gets \overline{f}_{i+1}+\varepsilon$
                \State $\mathrm{updated} \gets \mathrm{true}$
            \EndIf
        \EndFor
        \If{$\mathrm{updated}$}
            \State $\overline{\mathbf{u}} \gets (c, A^{-1}\overline{\mathbf{f}} + c, c)$
        \EndIf
    \Until{not $\mathrm{updated}$}
    \State \Return $\overline{\mathbf{u}}$
\end{algorithmic}
\end{algorithm}

To construct sub-solutions $\underline{u}$, we follow the same algorithm but reverse the inequality check in line 3 and appropriately adjust the signs in the subsequent updates accordingly.

To clarify the mathematical justification for this algorithm, we examine the local update performed in lines 5--7.
When the condition fails on the interval $I_i = [x_i, x_{i+1}]$ with width $h$, the algorithm increments the discrete source values $\overline{f}_i$ and $\overline{f}_{i+1}$ by $\varepsilon$.
In the continuous setting, this operation corresponds to adding a perturbation $\delta f(x)$ to the source term. This perturbation can be modeled as a localized function supported on $I_i$ with magnitude approximately $\varepsilon$:
\begin{equation}
    \delta f(x) \approx \begin{cases} \varepsilon & \text{if } x \in I_i, \\ 0 & \text{otherwise.} \end{cases}
\end{equation}
Let $\delta u$ denote the resulting correction to the solution, which satisfies $-\Delta (\delta u) = \delta f$ with homogeneous Dirichlet boundary conditions.
Using the Green's function $G(x,y)$ for the operator $-\Delta$, the correction $\delta u$ admits the representation
\begin{equation}
    \delta u(x) = \int_0^1 G(x,y) \delta f(y) \, dy \approx \int_{I_i} G(x,y) \varepsilon \, dy = \varepsilon h G(x,s) \quad \text{for some } s \in I_i,
\end{equation}
where the final equality follows from the mean value theorem for integrals.

This estimate shows that the correction is of order $O(\varepsilon h)$. 
Consequently, choosing $\varepsilon = O(h)$ (specifically $\varepsilon \propto h|f|$ in our implementation) yields a total correction of order $O(h^2)$.
This scaling matches the $O(h^2)$ discretization error of the finite difference scheme. As a result, the verification procedure introduces a perturbation that is comparable to the truncation error, allowing for tight solution enclosures without degrading the overall convergence rate.

In the following subsections, we present numerical results for several test cases and examine the relationship between the error bounds, the mesh size $h$, and the boundary shift parameter $c$. We consider both constant and discontinuous source terms, demonstrating the flexibility of the proposed approach across different problem settings.

\subsubsection{Case of Constant Source Term}
We first examine the case where the source term $f$ is constant throughout the domain. This simple setting allows us to analyze the fundamental behavior of the proposed method before addressing more complex problems. We particularly focus on identifying suitable choices for the key parameters: the boundary shift parameter $c$ in Corollary \ref{cor:1d_characterization} (specifically, conditions \eqref{eq:SS2} and \eqref{eq:SS4}) and the perturbation parameter $\varepsilon$ in Algorithm \ref{alg:super_solution}. Because the source term is smooth in this case, the one-dimensional finite difference method achieves $O(h^2)$ accuracy, which suggests that $c$ should be proportional to $h^2$ for optimal results. For $\varepsilon$, we set $\varepsilon = 0.25h \cdot |f|$ in our implementation, where $|f|$ is the magnitude of the source term.

Figure \ref{fig:error_vs_c_h_f1_f5} shows how the error (the maximum difference between sub- and super-solutions) varies with the mesh size $h$ for different values of $c$, with $\varepsilon = 0.25h \cdot |f|$. The left and right panels correspond to $f=1$ and $f=5$, respectively.

\begin{figure}[H]
    \begin{minipage}{0.5\linewidth}
        \centering
        \includegraphics{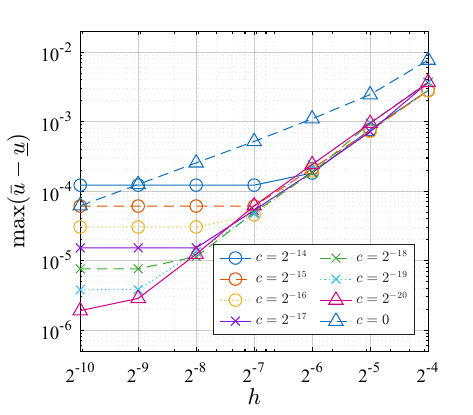}
    \end{minipage}%
    \begin{minipage}{0.5\linewidth}
        \centering
        \includegraphics{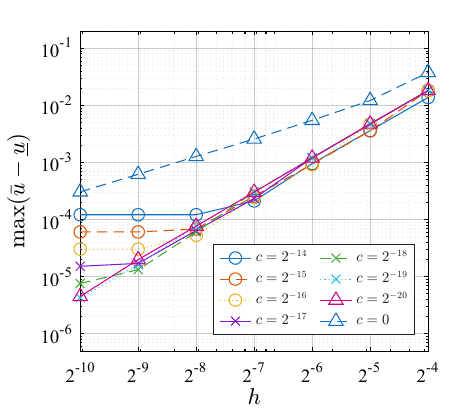}
    \end{minipage}
    \caption{Enclosure error versus mesh size $h$ for various $c$. Left: $f=1$, right: $f=5$.}\label{fig:error_vs_c_h_f1_f5}
\end{figure}

The results show that choosing $c=0$ leads to substantially larger enclosure errors, confirming the importance of the boundary shift parameter. Table \ref{tab:optimal_c_values} lists the optimal values of $c$ for various mesh sizes, demonstrating a consistent $O(h^2)$ scaling behavior.

\begin{table}[H]
\centering
\caption{Optimal values of boundary shift parameter $c$ for various mesh sizes.}
\label{tab:optimal_c_values}
\begin{tabular}{ccc}
\hline
Mesh size $h$ & Optimal $c$ for $f=1$ & Optimal $c$ for $f=5$ \\
\hline
$2^{-9}$ & $2^{-20}$ & $2^{-18}$ \\
$2^{-8}$ & $2^{-18}$ & $2^{-16}$ \\
$2^{-7}$ & $2^{-16}$ & $2^{-14}$ \\
$2^{-6}$ & $2^{-14}$ & -- \\
\hline
\end{tabular}
\end{table}

To determine the optimal proportionality constant, Figure \ref{fig:error_vs_C_f1_f5} shows the error behavior for different values of $C = c/h^2$ as the mesh size $h$ varies. Based on these results, we observe that $C = 0.2$ for $f=1$ and $C = 1$ for $f=5$ produce the smallest enclosure errors. This observation suggests that a practical choice is $C = 0.2|f|$, although the exact optimal value may depend on the specific problem.

\begin{figure}[H]
    \begin{minipage}{0.5\linewidth}
        \centering
        \includegraphics{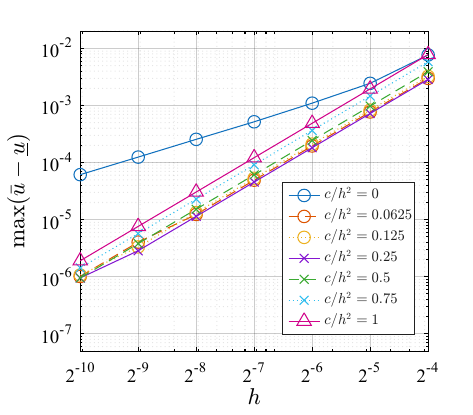}
    \end{minipage}%
    \begin{minipage}{0.5\linewidth}
        \centering
        \includegraphics{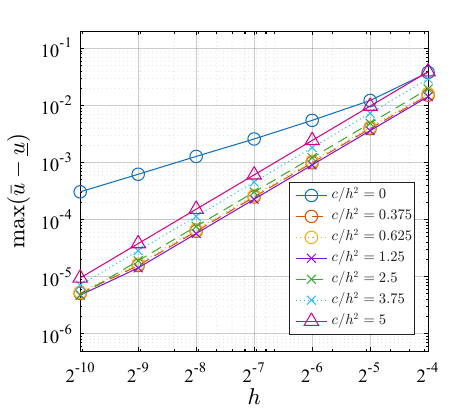}
    \end{minipage}
    \caption{Error behavior as a function of mesh size $h$ for different values of $C = c/h^2$. Left: $f=1$, right: $f=5$.}\label{fig:error_vs_C_f1_f5}
\end{figure}

Importantly, the numerical experiments indicate that the choice of $C = c/h^2$ is not overly sensitive. Several values of $C$ in the vicinity of the optimal value yield comparable error bounds. This robustness is a favorable property of the proposed method, as it enables practical implementation without requiring highly precise parameter tuning.

Figure \ref{fig:enclosure_visualization} illustrates the solution enclosures produced $u$ by the super-solution $\overline{u}$ and the sub-solution $\underline{u}$ constructed using Algorithm \ref{alg:super_solution} for $f=1$. The left panel corresponds to a coarser mesh with ($h=2^{-3}$), while the right panel shows the result for a finer mesh with ($h=2^{-4}$). In both cases, the exact solution is strictly enclosed between the upper and lower bounds over the entire domain. Moreover, the gap between the bounds decreases as the mesh is refined, visually demonstrating convergence of the enclosure.

\begin{figure}[htbp]
    \centering
    \begin{minipage}{0.4\linewidth}
        \centering
        \includegraphics[width=\linewidth]{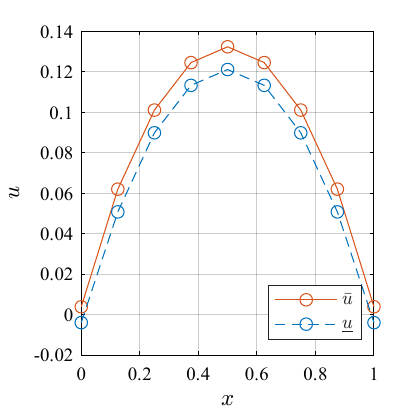}
        \subcaption{$h=2^{-3}$}
        \label{fig:encl8}
    \end{minipage}
    \begin{minipage}{0.4\linewidth}
        \centering
        \includegraphics[width=\linewidth]{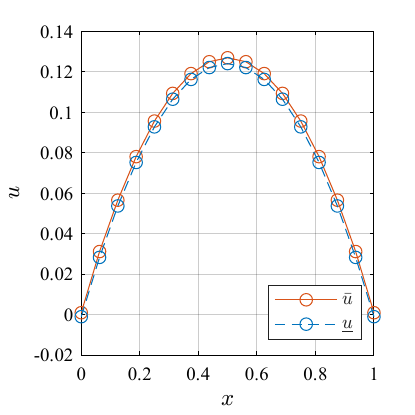}
        \subcaption{$h=2^{-4}$}
        \label{fig:encl16}
    \end{minipage}
    \caption{Visualization of solution enclosures for the constant source term $f=1$. The exact solution is tightly bound by the computed super- and sub-solutions.}
    \label{fig:enclosure_visualization}
\end{figure}

\subsubsection*{Optimal Bounds for Constant Sub- and Super-solutions}
We now present numerical evidence for the theoretical optimality discussed in Remark \ref{rem:optimal}. Figure \ref{fig:optimal_bounds} compares the exact solution $u$ with the optimal constant super-solution $\overline{c}_{\text{opt}}$ and the optimal constant sub-solution $\underline{c}_{\text{opt}}$ for $f=1$ and $f=5$. The computed values $\overline{c}_{\text{opt}} = 0.125$ for $f=1$ and $\overline{c}_{\text{opt}} = 0.625$ for $f=5$ precisely match the theoretical prediction $\overline{c}_{\text{opt}} = \sup\{u(s) : s\in\Omega\}$. Analogously, the optimal sub-solutions coincide with $\inf\{u(s) : s\in\Omega\}$, confirming the sharpness of the derived bounds.
These results demonstrate that the proposed framework attains the tightest possible constant closures, thereby validating both the theoretical analysis and its numerical realization.

\begin{figure}[H]
    \begin{minipage}{0.5\linewidth}
        \centering
        \includegraphics{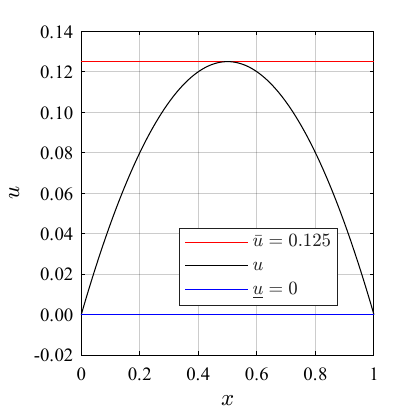}
    \end{minipage}%
    \begin{minipage}{0.5\linewidth}
        \centering
        \includegraphics{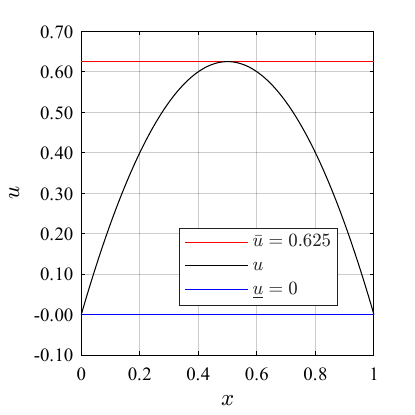}
    \end{minipage}
    \caption{Exact solution $u$ and optimal constant sub-/super-solutions. Left: $f=1$, right: $f=5$.}
    \label{fig:optimal_bounds}
\end{figure}

\subsubsection{Case of Discontinuous Source Terms}

Having established the behavior of the proposed method for constant source terms, we now examine its performance for discontinuous source terms. Such discontinuities reduce the regularity of the exact solution, which makes the construction of sharp enclosures more challenging. Nevertheless, such problems are of practical importance in many applications.

We consider piecewise constant source terms with a single discontinuity at a point $a \in (0,1)$:
\begin{equation}
f(x) = 
\begin{cases}
1, & x < a \\
1 + 2^{-5}n, & x \geq a,
\end{cases}
\end{equation}
Where the parameter $n$ controls the magnitude of the jump. Figure \ref{fig:discontinuous_approx} shows the approximate solutions for $a=0.25$ and $a=0.50$ with $n=4$. We systematically vary both the discontinuity location $a$ and the mesh size $h$, and we apply Algorithm~\ref{alg:super_solution} to construct the corresponding solution enclosures. Figures~\ref{fig:dchc_025} and~\ref{fig:dchc_050} present the enclosure error as a function of $h$ for several values of the boundary shift parameter $c$ and for $n \in \{1, 2, 3, 4\}$.

\begin{figure}[htbp]
    \centering
    \begin{minipage}{0.4\linewidth}
        \centering
        \includegraphics[width=\linewidth]{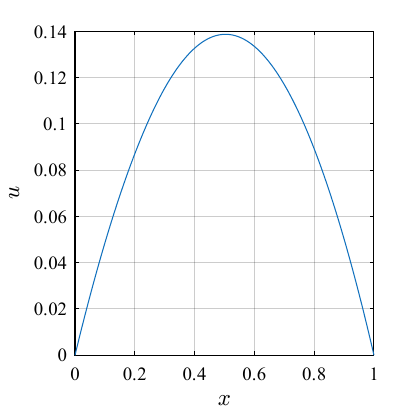}
        \subcaption{$a=0.25$}
        \label{fig:approx025}
    \end{minipage}
    \begin{minipage}{0.4\linewidth}
        \centering
        \includegraphics[width=\linewidth]{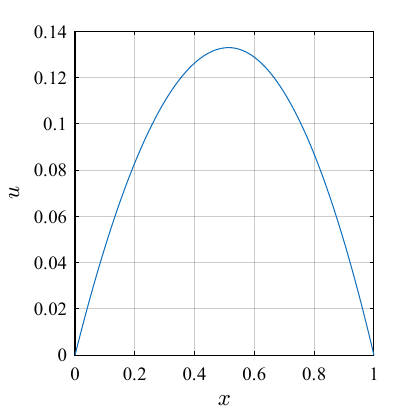}
        \subcaption{$a=0.50$}
        \label{fig:approx050}
    \end{minipage}
    \caption{Approximate solutions for discontinuous source terms. The discontinuity in $f$ induces a kink in the solution, which is rigorously enclosed by the proposed method.}
    \label{fig:discontinuous_approx}
\end{figure}

\begin{figure}[H]
    \centering
    \includegraphics{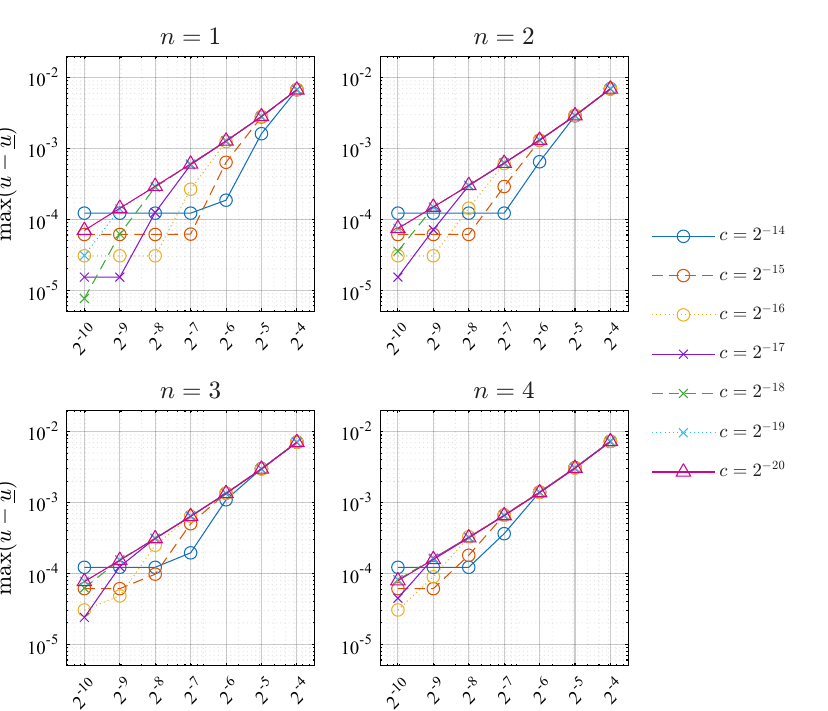}
    \caption{Error as a function of mesh size $h$ for various boundary shift parameters $c$. The source term has a discontinuity at $a = 0.25$, taking the values $f(x) = 1$ for $x < a$ and $f(x) = 1 + 2^{-5}n$ for $x \geq a$. Panels correspond to different values of $n \in \{1, 2, 3, 4\}$.}
    \label{fig:dchc_025}
\end{figure}

\begin{figure}[H]
    \centering
    \includegraphics{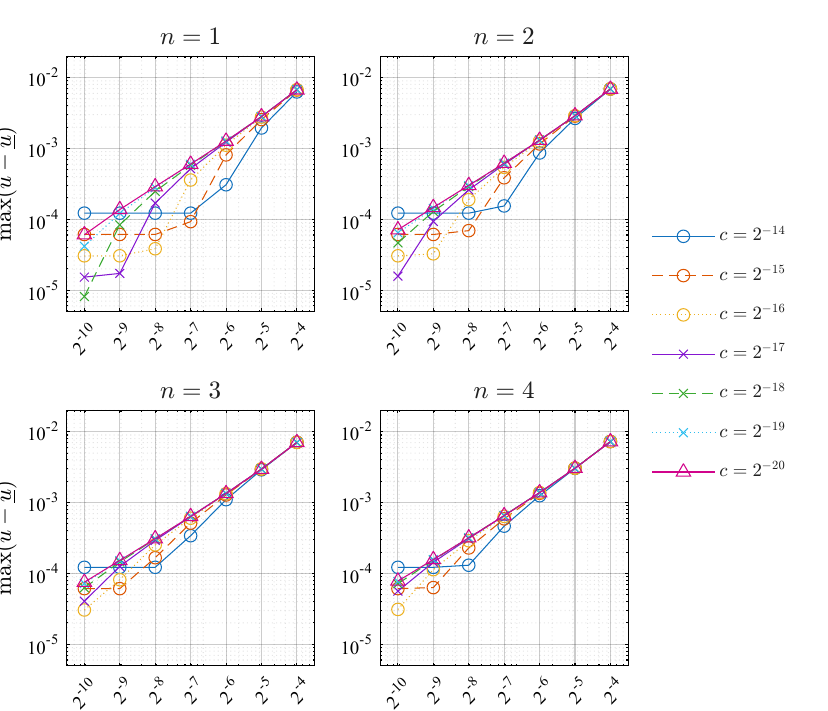}
    \caption{Error as a function of mesh size $h$ for various boundary shift parameters $c$. The source term has a discontinuity at $a = 0.5$, taking the values $f(x) = 1$ for $x < a$ and $f(x) = 1 + 2^{-5}n$ for $x \geq a$. Panels correspond to different values of $n \in \{1, 2, 3, 4\}$.}
    \label{fig:dchc_050}
\end{figure}

Several important observations emerge from these experiments:

\begin{enumerate}
    \item For all discontinuity locations and jump magnitudes considered, the error decreases as the mesh size $h$ decreases. This behavior confirms convergence of the proposed method even in the presence of discontinuous source terms.
    
    \item In contrast to the case of constant source terms, the optimal boundary shift parameter $c$ for discontinuous source terms exhibits an approximately linear dependence on the mesh size $h$. Analysis of the error curves indicates that the minimum error is achieved for $c = O(h^{1.1})$, which differs markedly from the quadratic scaling observed in the smooth case. This reduction in convergence order is consistent with standard finite difference theory \cite{JovanovicSuli2014}, as reduced regularity at the discontinuity can degrade the global convergence rate to $O(h)$.
    
    \item As the magnitude of the jump increases (higher values of $n$), the absolute error increases as expected. However, the relationship governing the optimal parameters ($c \approx O(h^{1.1})$) remains consistent across all tested values of $n$. This observation indicates that the proposed method maintains robustness with respect to the size of the discontinuity, although the convergence behavior differs from that observed in the continuous case.
\end{enumerate}

These results demonstrate that the proposed framework provides reliable solution enclosures even for problems with discontinuous source terms, albeit with reduced convergence rates compared to problems with smooth data. This capability remains valuable in practical applications where discontinuities frequently arise, including multi-material problems, phase transitions, and shock waves.

Finite element methods can achieve $O(h^2)$ convergence rates for piecewise constant source terms when the discontinuity aligns with element boundaries. In contrast, for finite difference methods, specialized techniques such as the Immersed Interface Method (IIM) \cite{LeVequeLi1994} are typically required to obtain comparable accuracy. Our experiments employ a standard finite difference discretization without such enhancements and yield convergence of order $O(h^{1.1})$, which is consistent with classical theory for problems exhibiting reduced regularity. Notably, these results demonstrate that the enclosure framework does not rely on specialized discretization techniques and remains effective when combined with conventional numerical methods.

\section{Application to Two-dimensional Polygonal Domains}\label{sec:twodim}
In this section, we extend the proposed theory to two-dimensional domains, with particular emphasis on non-convex polygonal geometries. We begin in Subsection~\ref{subsec:2d-regularity} by examining solution regularity near non-convex corners and establishing Green-representability for polygonal domains.
Subsection~\ref{subsec:enclosure-theorems} derives the enclosure results that provide pointwise bounds and comparison.
Subsection~\ref{subsec:test_function_construction} describes the construction of test functions using the MFS.
In Subsection~\ref{subsec:numericalexample}, we present numerical examples demonstrating the effectiveness of our approach for both convex and non-convex domains.
Finally, Subsection~\ref{subsec:singular_integrals} addresses the rigorous evaluation of singular integrals involving logarithmic potentials, which is essential for verified computation.

\subsection{Regularity and Green-representability}\label{subsec:2d-regularity}
The applicability of the global Green-representability framework (Corollary \ref{cor:Nd_representation_global}) depends on the solution regularity $u \in W^{1,q}(\Omega)$ for some $q > N$. In the two-dimensional case ($N=2$), this condition becomes $q > 2$.
While standard elliptic regularity theory guarantees $H^2$ regularity (and thus $W^{1,q}$ for all finite $q$), in convex domains, solutions posed on non-convex polygonal domains may lose this regularity because of corner singularities.
In this subsection, we show that even in the presence of reentrant corners, the solution retains sufficient regularity (specifically $u \in W^{1,q}(\Omega)$ for some $q > 2$) to validate our representation formula.

Let $(r,\theta)$ be polar coordinates centered at a vertex with interior angle $\alpha \in (\pi, 2\pi)$.
According to Grisvard \cite{grisvard2011elliptic}, the solution $u$ in a neighborhood of such a reentrant corner admits the decomposition
\begin{equation}\label{eq:corner_decomp}
u(r,\theta) = \lambda u_{\text{sing}}(r,\theta) + v(r,\theta), \quad u_{\text{sing}}(r,\theta) := r^{\pi/\alpha}\sin(\pi\theta/\alpha),
\end{equation}
where $\lambda$ denotes the stress intensity factor and $v \in W^{2,p}(\Omega)$ represents the regular component of the solution. Here, $p > 1$ corresponds to the integrability of the source term $f \in L^{p}(\Omega)$.
Considering the Sobolev embedding for $v$, we note that the following embedding holds
\begin{equation*}
W^{2,p}(\Omega) \hookrightarrow W^{1,q}(\Omega)
\end{equation*}
where $q$ satisfies
\[
q = \begin{cases}
\dfrac{2p}{2-p}, & 1 < p < 2, \\[6pt]
\text{any finite }q < \infty, & p = 2, \\[6pt]
\infty, & p > 2.
\end{cases}
\]
As a consequence, since the function $2p/(2-p)$ is monotonically increasing for $1<p<2$, we conclude that $v \in W^{1,q}(\Omega)$ for some $q > 2$.

With this decomposition, for any small $\delta > 0$, we have $u_{\text{sing}} \in W^{1,\frac{2\alpha}{\alpha-\pi}-\delta}(\Omega)$. In particular, as the interior angle $\alpha$ approaches $2\pi$ corresponding to the most severe non-convex configuration, the upper bound on the admissible exponent approaches 4. Therefore, when $\Omega$ is a two-dimensional polygonal domain, the solution $u \in H^1_0(\Omega)$ of \eqref{eq:main} enjoys enhanced regularity in the sense that: $u \in W^{1,q}(\Omega)$ for some $q > 2 = N$. Since this regularity satisfies the assumption of Corollary \ref{cor:Nd_representation_global}, the Green-representability result applies directly to non-convex polygonal domains in two dimensions.

\begin{theorem}\label{thm:2d_representation}
Let $\Omega \subset \mathbb{R}^2$ be a bounded polygonal domain that may be non-convex, and let $f \in L^p(\Omega)$ with $p > 1$. 
Then a solution $u \in H^1_0(\Omega)$ of problem \eqref{eq:main} is globally Green-representable with respect to $\testFuncMap$ for any mapping $\testFuncMap: \intpt \mapsto \phi_{\intpt}$ constructed by Construction \ref{con:test-function}.
\end{theorem}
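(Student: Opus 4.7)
The plan is to reduce the statement to Corollary~\ref{cor:Nd_representation_global}. Since $\Omega$ is Lipschitz (polygons are) and $f \in L^p(\Omega)$ with $p>1 = N/2$, the only hypothesis that is not already in hand is the global integrability of the gradient: $u \in W^{1,q}(\Omega)$ for some $q > 2$. Once this is verified, every mapping $\testFuncMap$ built by Construction~\ref{con:test-function} automatically yields Green-representability, because the corollary is stated for an arbitrary such $\testFuncMap$.

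The strategy for the regularity step is to work locally near each vertex and then patch via a partition of unity. Let $\{x_c^{(j)}\}_{j=1}^M$ be the vertices of $\Omega$ and $\alpha_j$ the corresponding interior angles. I would choose a partition of unity $\{\chi_0,\chi_1,\ldots,\chi_M\}$ so that $\chi_j$ is supported in a small sector around $x_c^{(j)}$ (disjoint from the other vertices) and $\chi_0$ is supported away from all vertices. On $\operatorname{supp}\chi_0$ interior elliptic regularity gives $u \in W^{2,p}_{\mathrm{loc}}$, and Sobolev embedding in two dimensions yields $\chi_0 u \in W^{1,q}$ for some $q>2$. At a convex vertex ($\alpha_j\le\pi$), standard corner estimates give local $H^2$-regularity, hence $\chi_j u \in W^{1,q}$ for every finite $q$. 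At a reentrant vertex ($\alpha_j \in (\pi,2\pi)$), I would invoke the Grisvard decomposition \eqref{eq:corner_decomp}: $u = \lambda_j\, r^{\pi/\alpha_j}\sin(\pi\theta/\alpha_j) + v_j$ with $v_j \in W^{2,p}$. The regular remainder $v_j$ is handled by the Sobolev embedding quoted immediately after \eqref{eq:corner_decomp} in the text, which yields $v_j\in W^{1,q}$ for some $q>2$ (the explicit value depending on $p$).

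The core computation is to verify that the singular profile itself lies in $W^{1,q}$ for some $q>2$. Using polar coordinates centered at $x_c^{(j)}$, one computes $|\nabla(r^{\pi/\alpha_j}\sin(\pi\theta/\alpha_j))| \sim r^{\pi/\alpha_j - 1}$, so
\begin{equation*}
\int_{B_{r_0}(x_c^{(j)})\cap\Omega} |\nabla u_{\text{sing}}|^{q}\,dx
\;\lesssim\; \int_0^{r_0} r^{q(\pi/\alpha_j-1)+1}\,dr,
\end{equation*}
which is finite exactly when $q < 2\alpha_j/(\alpha_j - \pi)$. Because $\alpha_j > \pi$, this upper bound is strictly greater than $2$ (and at worst equals $4$ when $\alpha_j\to 2\pi$), so one can always select $q_j>2$ with this property. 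Taking $q := \min_j q_j >2$ and summing the pieces $\chi_j u$, I obtain $u \in W^{1,q}(\Omega)$ globally.

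With this regularity established, Corollary~\ref{cor:Nd_representation_global} applies directly to give global Green-representability of $u$ with respect to every $\testFuncMap$ produced by Construction~\ref{con:test-function}. The most delicate step is the $W^{1,q}$-bookkeeping at reentrant corners; specifically, making sure that when several reentrant corners coexist the infimum $\min_j 2\alpha_j/(\alpha_j-\pi)$ still exceeds $2$, and that the patching via partition of unity does not introduce spurious loss of integrability from derivatives of $\chi_j$ acting on the singular profile. Both points are handled because $\chi_j$ is smooth with compact support and the singular derivative $r^{\pi/\alpha_j - 1}$ is multiplied by a factor $\chi_j$ that is bounded, so the critical exponent is unchanged.
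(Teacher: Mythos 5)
Your proposal is correct and follows essentially the same route as the paper: the paper justifies Theorem~\ref{thm:2d_representation} precisely by the regularity discussion preceding it---the Grisvard decomposition \eqref{eq:corner_decomp} at reentrant corners, the computation showing $u_{\text{sing}} \in W^{1,q}$ for $q < 2\alpha/(\alpha-\pi) > 2$, and the embedding $W^{2,p} \hookrightarrow W^{1,q}$ for the regular part---followed by an appeal to Corollary~\ref{cor:Nd_representation_global}. Your partition-of-unity patching and the explicit treatment of convex vertices are minor technical elaborations of the same argument, not a different approach.
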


\subsection{Enclosure Results}\label{subsec:enclosure-theorems}
Combining Theorem \ref{thm:2d_representation} with the decomposition \eqref{eq:corner_decomp}, we obtain the following pointwise representation result.

\begin{theorem}\label{thm:point_estimate}
Let $\Omega \subset \mathbb{R}^2$ be a bounded polygonal domain (possibly non-convex) and let $f \in L^p(\Omega)$ with $p > 1$. For the solution $u \in H^1_0(\Omega)$ of problem \eqref{eq:main}, we have
\begin{equation}\label{eq:u_s0_solution}
\intcoef u(\intpt)=\langle f,\phi_{\intpt} \rangle+\int_{\partial\Omega}\frac{\partial u}{\partial n}\phi_{\intpt}d\belem
\end{equation}
for any fixed $\intpt \in \Omega$, where $\phi_{\intpt}$ is arbitrarily constructed according to Construction \ref{con:test-function}.
\end{theorem}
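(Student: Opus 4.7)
The plan is to obtain this representation as a direct consequence of the general two-dimensional Green-representability result already established. First, I would invoke Theorem~\ref{thm:2d_representation}, which guarantees that the solution $u\in H^1_0(\Omega)$ is globally Green-representable with respect to any mapping $\testFuncMap$ built through Construction~\ref{con:test-function}. In particular, for the fixed $\intpt\in\Omega$ and the chosen $\phi_{\intpt}\in\testFuncSet$, Definition~\ref{def:green_representable_global} together with Definition~\ref{def:green_representable_local} yields
\[
\intcoef\,u(\intpt) \;=\; \langle f,\phi_{\intpt}\rangle \;+\; \Bigl\langle \tfrac{\partial u}{\partial n},\,\gamma(\phi_{\intpt})\Bigr\rangle_{H^{-1/2}(\partial\Omega),\,H^{1/2}(\partial\Omega)}.
\]
All that remains is to rewrite the duality pairing in the form of the boundary integral appearing in \eqref{eq:u_s0_solution}.

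For the second step, I would appeal to the regularity analysis of Subsection~\ref{subsec:2d-regularity}: the decomposition \eqref{eq:corner_decomp} implies $u\in W^{1,q}(\Omega)$ for some $q>2$, and since $\phi_{\intpt}$ is smooth near $\partial\Omega$ (its singularity is interior, at $\intpt$), the trace $\gamma(\phi_{\intpt})\in H^{1/2}(\partial\Omega)$. Away from the finitely many corner points $x_c$, $u$ is locally $W^{2,p}$ by elliptic regularity, so the classical identification of the weak Neumann trace with the pointwise normal derivative is valid, and the pairing coincides with an ordinary Lebesgue integral over the regular part of $\partial\Omega$. The treatment at the corners is already accomplished by Theorem~\ref{thm:thm:Nd_representation_local}, whose proof shows that the contributions of neighborhoods $\Omega\cap\Omega_\varepsilon$ of each vertex vanish as $\varepsilon\to 0$ precisely because of the enhanced regularity $u\in W^{1,q}(\Omega)$ with $q>2$. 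Hence the duality pairing is realized as the Cauchy principal value
\[
\int_{\partial\Omega}\frac{\partial u}{\partial n}\,\phi_{\intpt}\,d\belem := \mathrm{p.v.}\int_{\partial\Omega}\frac{\partial u}{\partial n}\,\phi_{\intpt}\,d\belem,
\]
so that \eqref{eq:u_s0_solution} follows.

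The main obstacle, and the only nontrivial point, is ensuring that the two ingredients actually fit together: Theorem~\ref{thm:2d_representation} is stated in terms of the abstract duality pairing on $H^{\pm1/2}(\partial\Omega)$, whereas the target formula \eqref{eq:u_s0_solution} uses a boundary integral. I would therefore explicitly note that since $\phi_{\intpt}$ is of the form \eqref{eq:test-function}, with the harmonic component $\harmonic\in C^2$ in a neighborhood of $\overline{\Omega}$ and the singular component $\intcoef\fundsol(\intpt,\cdot)$ analytic on $\partial\Omega$, the trace $\gamma(\phi_{\intpt})$ is in fact continuous (even smooth) on each edge of $\partial\Omega$, so the distributional pairing reduces to integration against $\phi_{\intpt}|_{\partial\Omega}$ in exactly the sense used in Theorem~\ref{thm:thm:Nd_representation_local}. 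Combining the two identifications then yields the claim for every fixed $\intpt\in\Omega$, and the arbitrariness in the construction of $\phi_{\intpt}$ is inherited from Construction~\ref{con:test-function}, completing the proof.
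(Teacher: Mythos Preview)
Your argument is correct and reaches the representation formula, but the route differs from the paper's in one substantive way. You pass through Theorem~\ref{thm:thm:Nd_representation_local} to interpret the boundary term as a Cauchy principal value, effectively \emph{defining} $\int_{\partial\Omega}\frac{\partial u}{\partial n}\,\phi_{\intpt}\,d\belem$ via the p.v.\ limit. The paper instead shows directly that $\frac{\partial u}{\partial n}\in L^1(\partial\Omega)$, so that the boundary integral in \eqref{eq:u_s0_solution} is a genuine Lebesgue integral and the duality pairing reduces to it without any limiting procedure. The paper's argument uses the same corner decomposition \eqref{eq:corner_decomp} you invoke, but exploits it more sharply: for the regular part $v\in W^{2,p}(\Omega)$, the trace theorem gives $\frac{\partial v}{\partial n}\in L^p(\partial\Omega)\subset L^1(\partial\Omega)$; for the singular part, $|\nabla u_{\mathrm{sing}}|\sim r^{\pi/\alpha-1}$ with $\pi/\alpha-1>-1/2>-1$, so $\frac{\partial u_{\mathrm{sing}}}{\partial n}$ is integrable along each edge emanating from the corner. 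Your approach is logically sound and self-contained given the earlier results, but it yields a nominally weaker conclusion (p.v.\ rather than absolute integrability) and is slightly more circuitous, since once you have the duality-pairing identity from Theorem~\ref{thm:2d_representation}, the direct $L^1$ estimate on $\frac{\partial u}{\partial n}$ finishes the job in one step without referring back to the $\varepsilon$-excision machinery.
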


\begin{proof}
By Theorem \ref{thm:2d_representation}, the solution $u$ is globally Green-representable, so
\[
\intcoef u(\intpt) = \langle f, \phi_{\intpt} \rangle + \Big\langle \frac{\partial u}{\partial n}, \gamma(\phi_{\intpt}) \Big\rangle_{H^{-1/2}(\partial\Omega), H^{1/2}(\partial\Omega)}.
\]
It remains to verify that the duality pairing on the right-hand side coincides with the boundary integral $\int_{\partial\Omega} \frac{\partial u}{\partial n} \phi_{\intpt} \, d\belem$.

Using the decomposition \eqref{eq:corner_decomp}, we write $u = \lambda u_{\text{sing}} + v$ where $v \in W^{2,p}(\Omega)$.
\begin{itemize}
\item \textbf{Regular part:} Since $v \in W^{2,p}(\Omega)$, the trace theorem implies $\frac{\partial v}{\partial n} \in L^p(\partial\Omega) \subset L^1(\partial\Omega)$.
\item \textbf{Singular part:} Near a corner with interior angle $\alpha \in (\pi, 2\pi)$, the gradient of $u_{\text{sing}} = r^{\pi/\alpha} \sin(\pi\theta/\alpha)$ satisfies $|\nabla u_{\text{sing}}| \sim r^{\pi/\alpha - 1}$. On the boundary edges emanating from the corner, the normal derivative inherits this behavior. For this to be integrable, we require $\int_0^\epsilon r^{\pi/\alpha - 1} \, dr < \infty$, which holds if and only if $\pi/\alpha - 1 > -1$. Since $\alpha < 2\pi$, we have $\pi/\alpha > 1/2$, hence $\pi/\alpha - 1 > -1/2 > -1$. Thus $\frac{\partial u_{\text{sing}}}{\partial n} \in L^1(\partial\Omega)$.
\end{itemize}
Consequently, $\frac{\partial u}{\partial n} = \lambda \frac{\partial u_{\text{sing}}}{\partial n} + \frac{\partial v}{\partial n} \in L^1(\partial\Omega)$. Since $\phi_{\intpt}$ is smooth on $\partial\Omega$ (as $\intpt \in \Omega$ is an interior point), the duality pairing reduces to the standard boundary integral.
\end{proof}

Recalling the discussion in Subsection \ref{subsec:practical-remarks}, we have already established, via a weak formulation approach (specifically, utilizing the auxiliary problem \eqref{eq:auxiliary_bvp}), that the boundary coupling term is non-positive when $f \geq 0$ and the test function is non-negative on the boundary, and non-negative when $f \geq 0$ and the test function is non-positive on the boundary.
This observation leads to the following practical enclosure result.

\begin{corollary}\label{cor:enclosure}
With the same notation as in Theorem \ref{thm:point_estimate}, suppose that $f \geq 0$ in $\Omega$. 
Let $\overline{\phi_{\intpt}}$ and $\underline{\phi_{\intpt}}$ be test functions constructed according to Construction \ref{con:test-function} that satisfy $\overline{\phi_{\intpt}} \geq 0$ and $\underline{\phi_{\intpt}} \leq 0$ on $\partial\Omega$.
Then, for any $\intpt \in \Omega$, we have
\begin{equation}
\label{eq:enclosure}
\frac{1}{\intcoef}\langle f,\underline{\phi_{\intpt}} \rangle \leq u(\intpt) \leq \frac{1}{\intcoef}\langle f,\overline{\phi_{\intpt}} \rangle.
\end{equation}
\end{corollary}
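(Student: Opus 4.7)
The plan is to combine the pointwise representation from Theorem~\ref{thm:point_estimate} with the sign analysis of the boundary coupling term already developed in Subsection~\ref{subsec:practical-remarks}. Both $\overline{\phi_{\intpt}}$ and $\underline{\phi_{\intpt}}$ are constructed via Construction~\ref{con:test-function}, so the identity
\[
\intcoef u(\intpt) = \langle f,\phi_{\intpt}\rangle + \int_{\partial\Omega}\frac{\partial u}{\partial n}\phi_{\intpt}\,d\belem
\]
applies to each of them separately. The entire corollary then reduces to controlling the sign of the boundary integral under the respective sign hypotheses on the boundary traces of the test functions, and noting that $\intcoef>0$ is implicit in the usual normalization of Construction~\ref{con:test-function} (as is already assumed in Subsection~\ref{subsec:practical-remarks}).

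For the upper bound, I would substitute $\phi_{\intpt}=\overline{\phi_{\intpt}}$ into \eqref{eq:u_s0_solution} and invoke the auxiliary elliptic problem \eqref{eq:auxiliary_bvp} with boundary data $\gamma(\overline{\phi_{\intpt}})\ge 0$. The calculation carried out in Subsection~\ref{subsec:practical-remarks} yields
\[
\int_{\partial\Omega}\frac{\partial u}{\partial n}\overline{\phi_{\intpt}}\,d\belem \;=\; -\int_{\Omega} f\,v\,dx,
\]
where $v\in H^1(\Omega)$ is harmonic in $\Omega$ with trace $\gamma(\overline{\phi_{\intpt}})\ge 0$ on $\partial\Omega$. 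The weak maximum principle then gives $v\ge 0$ in $\Omega$, and combined with $f\ge 0$ the integral is nonpositive. Hence $\intcoef u(\intpt)\le\langle f,\overline{\phi_{\intpt}}\rangle$, and dividing by $\intcoef>0$ produces the claimed upper estimate. The lower bound follows by the mirror-image argument: the trace $\gamma(\underline{\phi_{\intpt}})\le 0$ yields a harmonic extension $v\le 0$, so $-\int_{\Omega}fv\,dx\ge 0$ and the inequality reverses.

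The main obstacle I anticipate is not computational but a matter of rigor: one must justify that the boundary integral appearing in Theorem~\ref{thm:point_estimate}, which was established through the $H^{-1/2}$--$H^{1/2}$ duality pairing in Theorem~\ref{thm:2d_representation}, coincides with $-\int_{\Omega} f v\,dx$ via the weak Neumann trace of Definition~\ref{def:weak_neumann_trace}. This requires combining the regularity $u\in W^{1,q}(\Omega)$ with $q>2$ established in Subsection~\ref{subsec:2d-regularity}, together with the smoothness of $\phi_{\intpt}$ on $\partial\Omega$ (since $\intpt\in\Omega$ is an interior point), to ensure that the duality pairing, the classical boundary integral, and the volume-integral form derived from the auxiliary problem all agree. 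Once this identification is in place, the rest of the argument is a direct application of the weak maximum principle and requires no further analysis.
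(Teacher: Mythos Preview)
Your proposal is correct and follows essentially the same route as the paper: invoke the pointwise representation \eqref{eq:u_s0_solution} from Theorem~\ref{thm:point_estimate}, then control the sign of the boundary term via the auxiliary harmonic extension~\eqref{eq:auxiliary_bvp} and the weak maximum principle exactly as in Subsection~\ref{subsec:practical-remarks}. The identification you flag as the main obstacle is precisely what the paper handles in the proof of Theorem~\ref{thm:point_estimate} (showing $\partial u/\partial n\in L^1(\partial\Omega)$ so the duality pairing reduces to the ordinary boundary integral), after which the chain back to $-\int_\Omega fv\,dx$ is immediate from Definition~\ref{def:weak_neumann_trace}.
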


This corollary is particularly significant from a practical perspective because it provides upper and lower bounds for the solution even when the test functions do not satisfy the boundary conditions. This is a notable feature of the proposed framework. Moreover, by selecting $\intpt$ arbitrarily within the domain, one can obtain localized and precise enclosures of the solution values at specific points of interest. This flexibility in pointwise evaluation represents another distinctive characteristic of the method.
The numerical examples in Subsection \ref{subsec:numericalexample} are based on this corollary.

Although the condition $f \geq 0$ may appear restrictive, as noted in Subsection \ref{subsec:practical-remarks}, we can handle more general right-hand sides $f$ through the alternative problem \eqref{eq:decomposition}.
The construction of $\phi_{\intpt}$ offers considerable flexibility, including the possibility of vertical translations, which in principle permits control of the boundary sign of the test function. However, it should be noted that since the inequalities in \eqref{eq:enclosure} are derived from \eqref{eq:u_s0_solution} by evaluating the boundary integral term from above and below by zero, the estimates become sharper when $\phi_{\intpt}$ is designed to minimize its boundary values.

While Corollary \ref{cor:enclosure} provides an estimate at a single point, combining this pointwise observation with the comparison principle established in Theorem \ref{thm:comparison_h10} allows us to extend these bounds to the entire domain. Applying this idea to the two-dimensional setting yields the following global enclosure result.

\begin{corollary}\label{cor:enclosure_comparison}
With the same notation and assumptions as in Corollary \ref{cor:enclosure},
consider two mappings
$\overline{\testFuncMap} : \intpt \mapsto \overline{\phi_{\intpt}}$ and
$\underline{\testFuncMap} : \intpt \mapsto \underline{\phi_{\intpt}}$.
For a Green-representable super-solution $\overline{u}$ with respect to $\overline{\testFuncMap}$ and
a Green-representable sub-solution $\underline{u}$ with respect to $\underline{\testFuncMap}$,
we have
\begin{equation}
\label{eq:enclosure_comparison}
\underline{u} \leq u \leq \overline{u} \quad \text{in } \Omega.
\end{equation}
\end{corollary}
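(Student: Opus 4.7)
The plan is to apply Theorem~\ref{thm:comparison_h10} twice---once with the super-solution mapping $\overline{\testFuncMap}$ and once with the sub-solution mapping $\underline{\testFuncMap}$---and then use the auxiliary lift argument from Subsection~\ref{subsec:practical-remarks} to show that the boundary correction terms $\gamma_{\intpt}$ in each case have the right sign to be dropped. What remains after the signs are pinned down is precisely the gap-free enclosure \eqref{eq:enclosure_comparison}.

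Concretely, the first step is to observe that, because $\overline{\testFuncMap}$ and $\underline{\testFuncMap}$ are both built from Construction~\ref{con:test-function}, Theorem~\ref{thm:2d_representation} guarantees that the solution $u \in H^1_0(\Omega)$ is globally Green-representable with respect to each of them. Hence Theorem~\ref{thm:comparison_h10} applies in both situations and yields, for every $\intpt \in \Omega$, the preliminary bounds
\begin{equation*}
\underline{u}(\intpt) - \underline{\gamma}_{\intpt} \;\leq\; u(\intpt) \;\leq\; \overline{u}(\intpt) + \overline{\gamma}_{\intpt},
\end{equation*}
where $\overline{\gamma}_{\intpt}$ and $\underline{\gamma}_{\intpt}$ are defined by \eqref{eq:gamma_def} using $\overline{\phi_{\intpt}}$ and $\underline{\phi_{\intpt}}$, respectively.

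The main step is then the sign analysis. For the upper bound, I would introduce the auxiliary harmonic lift $\overline{v} \in H^1(\Omega)$ solving \eqref{eq:auxiliary_bvp} with Dirichlet data $\gamma(\overline{\phi_{\intpt}}) \in H^{1/2}(\partial\Omega)$. Since $\gamma(\overline{\phi_{\intpt}}) \geq 0$ on $\partial\Omega$ by hypothesis, the weak maximum principle gives $\overline{v} \geq 0$ in $\Omega$. Inserting $\overline{v}$ as the lift in Definition~\ref{def:weak_neumann_trace} and using the orthogonality $(\nabla u, \nabla \overline{v}) = 0$ obtained by choosing $\psi = u$ in the weak form of \eqref{eq:auxiliary_bvp}, one arrives at
\begin{equation*}
\Big\langle \tfrac{\partial u}{\partial n},\, \gamma(\overline{\phi_{\intpt}}) \Big\rangle_{H^{-1/2},H^{1/2}} \;=\; -\int_{\Omega} f\, \overline{v}\, dx \;\leq\; 0,
\end{equation*}
since $f \geq 0$. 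With $\intcoef > 0$ (which is the natural sign convention under Construction~\ref{con:test-function}, as is explicit in the one-dimensional case and in the MFS construction of Subsection~\ref{subsec:test_function_construction}), this yields $\overline{\gamma}_{\intpt} \leq 0$, so the upper inequality sharpens to $u(\intpt) \leq \overline{u}(\intpt)$. A symmetric argument with the hypothesis $\gamma(\underline{\phi_{\intpt}}) \leq 0$ gives $\underline{v} \leq 0$, whence $\underline{\gamma}_{\intpt} \geq 0$ and therefore $\underline{u}(\intpt) \leq u(\intpt)$.

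The expected obstacle is essentially bookkeeping: one must track the sign of $\intcoef$ consistently across both mappings and verify that the weak Neumann trace identity is legitimate even on non-convex polygonal domains, where $u$ only lives in $W^{1,q}(\Omega)$ for some $q>2$ rather than $H^2(\Omega)$. The latter point is already covered by the corner-regularity discussion leading to Theorem~\ref{thm:2d_representation}, so no additional regularity arguments are needed beyond those already in the paper; combining the two one-sided inequalities over all $\intpt \in \Omega$ then completes the proof.
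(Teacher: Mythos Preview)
Your proposal is correct and follows essentially the same approach as the paper, which states (just before the corollary) that the result is obtained by combining the comparison principle of Theorem~\ref{thm:comparison_h10} with the sign analysis for the boundary term from Subsection~\ref{subsec:practical-remarks}; your argument simply makes these steps explicit. The only minor point is that you rely on $\intcoef > 0$, which the paper also assumes tacitly in this setting (cf.\ Subsection~\ref{subsec:practical-remarks} and the choice $\intcoef = 1$ in the MFS algorithm).
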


Corollary \ref{cor:enclosure_comparison} provides a theoretical foundation for constructing global solution bands. The computational procedure for handling varying evaluation points $\intpt$ is described in Section \ref{subsec:singular_integrals}. However, a full numerical implementation is not conducted in this study. Instead, we focus on rigorous pointwise estimates via Corollary \ref{cor:enclosure}. It is worth emphasizing that obtaining guaranteed pointwise bounds for solutions with low regularity (e.g., in non-convex domains) has been a challenging problem in conventional methods such as finite element analysis. The proposed method achieves this naturally, providing a practical method for local verification. The extension to global bands remains an interesting direction for future work.

\subsection{Construction of Test Functions}\label{subsec:test_function_construction}
In this subsection, we describe a systematic approach for constructing test functions $\phi_{\intpt}$ of the form \eqref{eq:test-function} with \eqref{eq:harmonic-part}.
As indicated by Corollary \ref{cor:enclosure}, the sharpness of the rigorous enclosure depends directly on the magnitude of the boundary values of the test functions. Specifically, smaller deviations from zero on the boundary lead to tighter bounds for the solution $u(\intpt)$.
We begin by introducing a decomposition of $\phi_{\intpt}$ into a basic component $\phi_{\intpt}^0$ and a constant shift $C$:
\begin{equation}
\phi_{\intpt}^0 (x):= \intcoef\fundsol(\intpt,x) + \sum_{i=1}^n a_i\fundsol(s_i,x),
\end{equation}
where $\phi_{\intpt}:= \phi_{\intpt}^0 + C$.
This decomposition facilitates the optimization process described in the following algorithm.

\begin{algorithm}
\caption{Construction of Test Functions for Solution Enclosure}
\begin{algorithmic}[1]
\State Fix $\intpt$ and set $\intcoef=1$ 
\State Determine source points $\{s_1,\ldots,s_n\} \subset \mathbb{R}^2 \setminus \overline{\Omega}$ based on the domain geometry
\State Determine coefficients $\{a_1,\ldots,a_n\}$ to minimize $\|\phi_{\intpt}^0\|_{L^\infty(\partial\Omega)}$ (approximate homogeneous boundary condition)
\State Compute rigorous bounds for the boundary values: $m := \displaystyle\min_{x \in \partial\Omega}\phi_{\intpt}^0(x)$ and $M := \displaystyle\max_{x \in \partial\Omega}\phi_{\intpt}^0(x)$ using interval arithmetic
\State Construct test functions: $\overline{\phi_{\intpt}}=\phi_{\intpt}^0 - m$ (so that $\overline{\phi_{\intpt}} \ge 0$ on $\partial\Omega$) and \newline $\underline{\phi_{\intpt}}=\phi_{\intpt}^0 - M$ (so that $\underline{\phi_{\intpt}} \le 0$ on $\partial\Omega$)
\end{algorithmic}
\end{algorithm}
The key step in this construction is to approximate the harmonic function $v$ defined by
\begin{equation}
v = -\intcoef\Gamma_{\intpt} \quad \text{on } \partial\Omega, \quad \Delta v = 0 \quad \text{in } \Omega,
\end{equation}
so that $\phi_{\intpt}^0 = v + \intcoef\Gamma_{\intpt}$ nearly vanishes on the boundary.
To construct an approximate solution $\hat{v}$ using the fundamental solutions, we employ the MFS.
The implementation involves the following steps:
\begin{itemize}
\item Selection of source points $s_{1},s_{2},\dotsc,s_{n}\in\mathbb{R}^{2}\setminus\bar{\Omega}$.
\item Choice of collocation points $x_{1},x_{2},\dotsc,x_{n}\in\partial\Omega$.
\end{itemize}
Once the source and collocation points are fixed, the coefficients $a_i$ are determined by solving the linear system:
\begin{equation}
  \begin{pmatrix}\fundsol(s_{1},x_{1}) & \fundsol(s_{2},x_{1}) & \cdots & \fundsol(s_{n},x_{1})\\ \fundsol(s_{1},x_{2}) & \fundsol(s_{2},x_{2}) & \cdots & \fundsol(s_{n},x_{2})\\ \vdots & \vdots & \ddots & \vdots\\ \fundsol(s_{1},x_{n}) & \fundsol(s_{2},x_{n}) & \cdots & \fundsol(s_{n},x_{n})\end{pmatrix}\begin{pmatrix}a_{1}\\ a_{2}\\ \vdots\\ a_{n}\end{pmatrix} = \begin{pmatrix}-\fundsol(\intpt,x_{1})\\ -\fundsol(\intpt,x_{2})\\ \vdots\\ -\fundsol(\intpt,x_{n})\end{pmatrix}.
\end{equation}

It is worth noting that while theoretical convergence guarantees for the MFS typically require smoothness of $\partial\Omega$, the absence of such guarantees for domains does not compromise the rigor of our enclosure method.
In our framework, the MFS serves solely to generate a \textit{candidate} test function.
The validity of the final enclosure \eqref{eq:enclosure} relies exclusively on the rigorous evaluation of the boundary values $m$ and $M$ (Step 4 of the Algorithm), which is performed using interval arithmetic (as detailed in Section \ref{subsec:numericalexample}).
Even if the MFS approximation is inaccurate, the method remains mathematically rigorous, although the resulting bounds $[\underline{u}, \overline{u}]$ may become wider.
Therefore, empirical guidelines for source point placement, such as those established in \cite{amano1991error} for rectangular and L-shaped domains, can be effectively employed to obtain sharp enclosures.Our specific parameter choices for these domains are detailed in Section ~\ref{subsec:numericalexample}.

\subsection{Numerical Example}\label{subsec:numericalexample}

All numerical computations were performed using MATLAB R2024b with GNU C++ Compiler 11.4.0. To ensure the rigor of our numerical results, all rounding errors were strictly controlled using interval arithmetic implemented in the kv library version 0.4.57~\cite{kashiwagi2024kv}. 
The Schwarz--Christoffel Toolbox version 3.1.3~\cite{driscoll2024schwarz} was used in part of the source points computations.
We note that while our experiments were conducted on a workstation with four Intel Xeon Platinum 8380H processors (2.90 GHz) and 3 TB RAM running Ubuntu 22.04.4, the actual memory consumption was less than 1 GB. This indicates that the experiments can be reproduced on standard personal computers.

We conducted numerical experiments on the following boundary value problems:
\begin{equation}
    \begin{cases}
        -\Delta u = f_i & \text{in $\Omega$,}\\
        u = 0 & \text{on $\partial\Omega$,}
    \end{cases}
\end{equation}
where three test cases were considered.
\begin{itemize}
    \item $f_1(x,y) = 1$ (constant source term)
    \item $f_2(x,y)=(x-0.125)^2+(y-0.25)^3$ (polynomial source term)
    \item $f_3(x,y) = x + \sin((x+0.5)y^2)$ (non-polynomial variable source term)
\end{itemize}

Let $\Omega_{\square}$ denote the square domain $(-0.5,0.5)^2$ and $\Omega_{\mathsf{L}}$ denote the L-shaped domain $(-1,1)^2\setminus[0,1]^2$.
For both domains, we employed 69 collocation points and 69 source points as illustrated in Fig.~\ref{fig:charge_sim_overview}. The numerical results are presented in Tables \ref{tab:square_results} and \ref{tab:lshape_results}, where the relative error is computed as the ratio of the interval width to the midpoint of the interval. Note that the interval widths are computed from the original interval endpoints before rounding; therefore, they may differ slightly from the differences of the displayed rounded values. The computed solution enclosures are visualized in Fig.~\ref{fig:domain_results_overview}.
These results demonstrate that the proposed method achieves highly accurate enclosures for the square domain. At the origin $(0,0)$, the interval widths are on the order of $10^{-7}$ to $10^{-8}$, corresponding to relative errors of $10^{-6}$ to $10^{-4}$, while at the evaluation point $(0.25,0.25)$, the interval widths are on the order of $10^{-6}$. 

For the L-shaped domain, which presents additional challenges due to the corner singularity at the origin, the enclosures are significantly wider. The interval widths are on the order of $10^{-3}$ for the constant and polynomial source terms ($f_1$ and $f_2$), corresponding to relative errors of approximately $5$--$7\%$, and on the order of $10^{-2}$ for the non-polynomial variable source term $f_3$, with relative errors reaching up to $59\%$. This degradation in precision is expected. The corner singularity reduces solution regularity, and the MFS approximation becomes less effective near the reentrant corner, resulting in test functions whose boundary values deviate more significantly from zero.
Despite the wider enclosures obtained for the L-shaped domain, these results are significant because they provide \emph{rigorous} pointwise bounds without assuming $W^{2,p}$ regularity---a setting in which conventional finite element error estimates are not applicable. Although varying $\intpt$ continuously throughout the entire domain is not pursued in this paper, as it would require recomputation of the MFS approximation for each evaluation point, the present results represent, to the best of our knowledge, the first instance of guaranteed pointwise estimates for the Poisson equation in non-convex domains.

Notably, the polynomial source term $f_2$ exhibits interval widths comparable to or slightly smaller than the constant source term $f_1$, while the non-polynomial variable source term $f_3$ containing trigonometric functions shows larger interval widths. This is expected due to the additional complexity in evaluating integrals involving trigonometric functions with interval arithmetic.
The precision of the obtained enclosures, as shown in Tables \ref{tab:square_results} and \ref{tab:lshape_results}, validates our theoretical framework for both convex and non-convex domains. Moreover, the results indirectly demonstrate that the chosen MFS configuration effectively produced test functions that are sufficiently close to zero on the boundary---particularly for the square domain---which is essential for achieving sharp bounds.

% ============================================================
% Figures and Tables
% ============================================================

\begin{figure}[htbp]
    \begin{minipage}{0.5\linewidth}
        \centering
        \includegraphics[width=7cm]{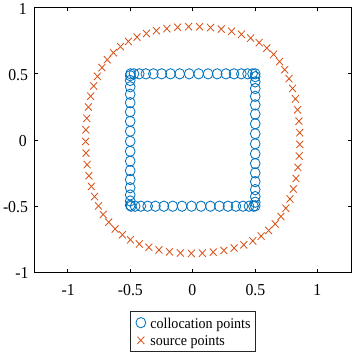}
        \subcaption{Square domain $\Omega_{\square}$}
        \label{fig:rect_chargesim_1}
    \end{minipage}%
    \begin{minipage}{0.5\linewidth}
        \centering
        \includegraphics[width=7cm]{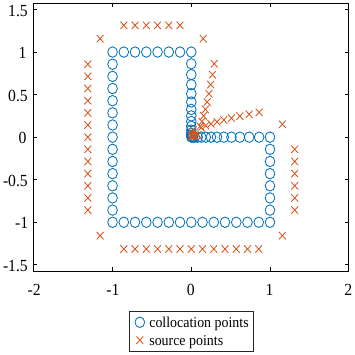}
        \subcaption{L-shaped domain $\Omega_{\mathsf{L}}$}
        \label{fig:lshape_chargesim_1}
    \end{minipage}
    \caption{Distribution of collocation and source points for the test domains. Red dots indicate collocation points. Both domains contain 69 points.}
    \label{fig:charge_sim_overview}
\end{figure}

\begin{figure}[htbp]
    \noindent
    \begin{minipage}{0.5\linewidth}
        \centering
        \includegraphics[width=7cm]{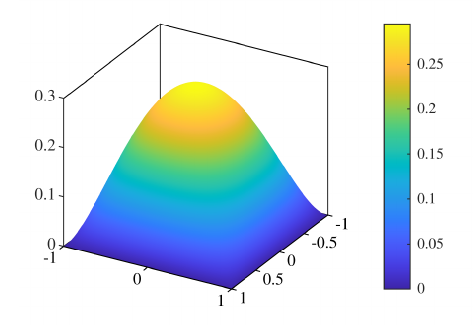}
        \subcaption{$\Omega_{\square}$ \& $f_{1}$}
        \label{fig:rect_chargesim_2}
    \end{minipage}%
    \begin{minipage}{0.5\linewidth}
        \centering
        \includegraphics[width=7cm]{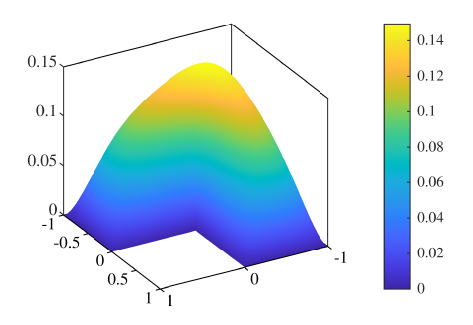}
        \subcaption{$\Omega_{\mathsf{L}}$ \& $f_{1}$}
        \label{fig:lshape_chargesim_2}
    \end{minipage}%

    \bigskip
    \noindent
    \begin{minipage}{0.5\linewidth}
        \centering
        \includegraphics[width=7cm]{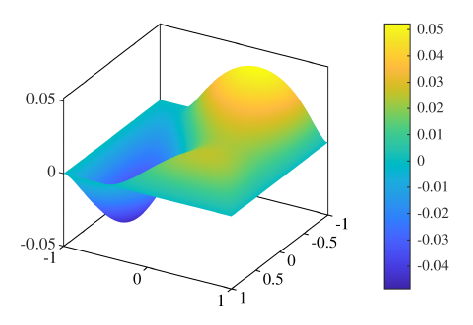}
        \subcaption{$\Omega_{\square}$ \& $f_{2}$}
        \label{fig:rect_chargesim_3}
    \end{minipage}%
    \begin{minipage}{0.5\linewidth}
        \centering
        \includegraphics[width=7cm]{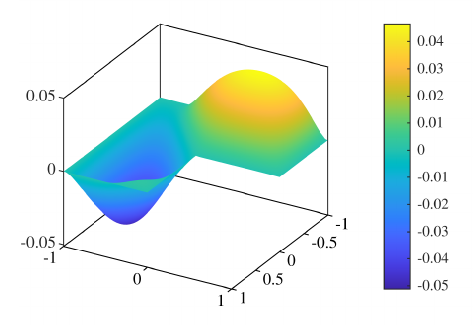}
        \subcaption{$\Omega_{\mathsf{L}}$ \& $f_{2}$}
        \label{fig:lshape_chargesim_3}
    \end{minipage}

    \bigskip
    \noindent
    \begin{minipage}{0.5\linewidth}
        \centering
        \includegraphics[width=7cm]{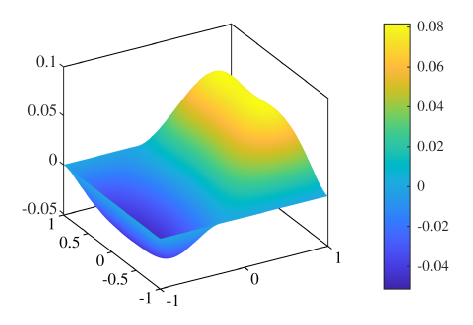}
        \subcaption{$\Omega_{\square}$ \& $f_{3}$}
        \label{fig:rect_chargesim_4}
    \end{minipage}%
    \begin{minipage}{0.5\linewidth}
        \centering
        \includegraphics[width=7cm]{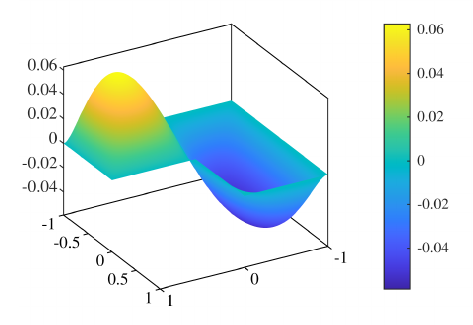}
        \subcaption{$\Omega_{\mathsf{L}}$ \& $f_{3}$}
        \label{fig:lshape_chargesim_4}
    \end{minipage}
    \caption{Computed solution enclosures for the square domain $\Omega_{\square}$ (left column) and the L-shaped domain $\Omega_{\mathsf{L}}$ (right column) with source terms $f_1$, $f_2$, and $f_3$ (top to bottom). Each plot shows the upper and lower bounds of the enclosure.}
    \label{fig:domain_results_overview}
\end{figure}

\begin{table}[htbp]
\centering
\caption{Enclosure results for the square domain $\Omega_{\square}$.}
\label{tab:square_results}
\small
\begin{tabular}{ccccc}
\hline
Evaluation Point $\intpt$ & Source Term $f$ & Enclosure Interval & Interval Width & Relative Error \\
\hline
\multirow{3}{*}{$(0,0)$} & $f_1$ & $[7.367\times 10^{-2},7.368\times 10^{-2}]$ & $2.92\times 10^{-7}$ & $3.96\times 10^{-6}$ \\
& $f_2$& $[1.134\times 10^{-2},1.135\times 10^{-2}]$ & $7.15\times 10^{-8}$ & $6.31\times 10^{-6}$ \\
& $f_3$& $[1.395\times 10^{-3},1.396\times 10^{-3}]$ & $3.04\times 10^{-7}$ & $2.18\times 10^{-4}$ \\
\hline
\multirow{3}{*}{$(0.25,0.25)$} & $f_1$ & $[4.528\times 10^{-2},4.529\times 10^{-2}]$ & $5.94\times 10^{-6}$ & $1.32\times 10^{-4}$ \\
& $f_2$& $[4.049\times 10^{-3},4.052\times 10^{-3}]$ & $1.46\times 10^{-6}$ & $3.59\times 10^{-4}$ \\
& $f_3$& $[7.702\times 10^{-3},7.709\times 10^{-3}]$ & $6.19\times 10^{-6}$ & $8.03\times 10^{-4}$ \\
\hline
\end{tabular}
\end{table}

\begin{table}[htbp]
\centering
\caption{Enclosure results for the L-shaped domain $\Omega_{\mathsf{L}}$.}
\label{tab:lshape_results}
\small
\begin{tabular}{ccccc}
\hline
Evaluation Point $\intpt$ & Source Term $f$ & Enclosure Interval & Interval Width & Relative Error \\
\hline
\multirow{3}{*}{$(-0.5,-0.5)$} & $f_1$ & $[1.234\times 10^{-1},1.330\times 10^{-1}]$ & $9.53\times 10^{-3}$ & $7.44\times 10^{-2}$ \\
& $f_2$ & $[1.067\times 10^{-1},1.151\times 10^{-1}]$ & $8.29\times 10^{-3}$ & $7.47\times 10^{-2}$ \\
& $f_3$ & $[-6.05\times 10^{-2},-3.28\times 10^{-2}]$ & $2.76\times 10^{-2}$ & $5.92\times 10^{-1}$ \\
\hline
\multirow{3}{*}{$(0.5,-0.5)$} & $f_1$ & $[9.855\times 10^{-2},1.034\times 10^{-1}]$ & $4.82\times 10^{-3}$ & $4.77\times 10^{-2}$ \\
& $f_2$ & $[7.417\times 10^{-2},7.836\times 10^{-2}]$ & $4.19\times 10^{-3}$ & $5.49\times 10^{-2}$ \\
& $f_3$ & $[5.199\times 10^{-2},6.593\times 10^{-2}]$ & $1.40\times 10^{-2}$ & $2.37\times 10^{-1}$ \\
\hline
\end{tabular}
\end{table}

\subsubsection*{Distribution of Singular Points}
Here, we describe the method used to determine the locations of singular points and collocation points in our numerical implementation.

First, let us explain the procedure for the square domain $\Omega_{\square}$. For convenience, we identify $\mathbb{R}^{2}$ with $\mathbb{C}$. Since $\Omega_{\square}$ is a simple polygon, there exists a conformal mapping $\Phi(z)$ from the unit disk $D=\lbrace z\in\mathbb{C}:\lvert z\rvert<1\rbrace$ to the exterior of $\Omega_{\square}$, which can be constructed using the Schwarz--Christoffel mapping. Although simpler point distribution strategies can achieve reasonable accuracy for the square domain, we employed this conformal mapping approach with the expectation of obtaining slightly better numerical results. The conformal mapping $\Psi(z)=\Phi(1/z)$ maps the exterior of $D$ to the exterior of $\Omega_{\square}$. Using this mapping, we define the collocation points $x_{1},x_{2},\dotsc,x_{n}$ and source points $s_{1},s_{2},\dotsc,s_{n}$ through the following equations
\begin{equation}
    x_{k} = \Psi(\mathrm{e}^{2\pi\mathrm{i}k/n}),
    \quad s_{k} = \Psi\biggl(\frac{3}{2}\mathrm{e}^{2\pi\mathrm{i}k/n}\biggr)\quad(k=1,2,\dotsc,n).
\end{equation}
For the L-shaped domain $\Omega_{\mathsf{L}}$, as reported in \cite{amano1991error}, placing source points and collocation points densely near the reentrant corner can reduce the boundary error. Accordingly, we placed collocation points with higher density near the reentrant corner and determined the source points based on Amano's arrangement~\cite{amano1991error}
\begin{equation}
    s_{k} = x_{k}-\frac{\mathrm{i}r_{k}}{2}(x_{k+1}-x_{k-1}),
\end{equation}
where we arrange $x_{1},x_{2},\dotsc,x_{n}$ so that their arguments are monotonically increasing and define $x_{0}\coloneq x_{n}$ and $x_{n+1}\coloneq x_{1}$. In the standard Amano arrangement, $r_{k}$ is a positive constant independent of $k$, in our experiments, however, we modified $r_{k}$ as follows:
\begin{equation*}
    r_{k} = \frac{R_{k}-1}{\sin(2\pi/n)},
    \quad\text{where}~R_{k} = \begin{cases}1.05 & \text{if $x_{k}\in[0,0.1]^{2}$,}\\ 1.2 & \text{if $x_{k}\notin[0,0.1]^{2}$}\end{cases}
\end{equation*}
Note that the formula $r=(R-1)/\sin(2\pi/n)$ is based on \cite[Theorem 4.1]{sakakibara2020bidirectional}.

\subsection{Rigorous Singular Integrals with Logarithmic Potentials}\label{subsec:singular_integrals}
After determining $\phi_{\intpt}$ using the method described in Subsection \ref{subsec:test_function_construction}, applying Corollary \ref{cor:enclosure} requires the rigorous evaluation of the integral
\begin{equation}
\int_{\Omega}f\cdot\phi_{\intpt}\,dx.
\end{equation}
The fundamental integral calculation underlying this evaluation is
\begin{equation}
   I = \iint_{\Omega}f(x,y)\log((x-x_{0})^{2}+(y-y_{0})^{2})\,dx\,dy,
\end{equation}
where $\intpt=(x_{0},y_{0})\in\bar{\Omega}$. 
While our original framework only requires $\intpt$ to be an interior point of $\Omega$, from the perspective of integration theory, $\intpt$ may also lie on the boundary provided that $f$ possesses sufficient regularity for $I$ to be integrable. This extension of the assumptions is particularly useful when we need to consider $\intpt$ approaching the boundary, as in applications of Corollary \ref{cor:enclosure_comparison}.
In what follows, we present our approach for obtaining rigorous enclosures of the integral value near $\intpt$. We assume that $\Omega$ is a bounded polygonal domain and that $f$ is sufficiently smooth (smooth enough to admit Taylor expansions, though this condition could potentially be relaxed to include piecewise smooth functions that ensure integrability of $I$).

We begin by constructing a triangulation of $\Omega$ such that $\intpt$ is located at one of the vertices; for example, when $\Omega$ is rectangular, the desired triangulation can be obtained as illustrated in Figure \ref{fig:triangulation}.
For simplicity of exposition, we assume $x_0=y_0=0$ and that $\Omega$ is a triangle defined by $\{(x,y)\in\mathbb{R}^2:0<ay<bx<ab\}$. Under the variable transformation $u=x$, $k=y/x$, the integral $I_0$ over this triangle can be expressed as
\begin{equation}
   I_0 = \underbrace{\int_{0}^{a}\biggl(2u\log(u)\int_{0}^{b/a}f(u,ku)\,dk\biggr)\,du}_{I_1} 
   + 
   \underbrace{\int_{0}^{a}\!\!\int_{0}^{b/a}uf(u,ku)\log(1+k^{2})\,dk\,du}_{I_2}.
\end{equation}
We denote the first term on the right-hand side by $I_1$ and the second term as $I_2$. The integrand of $I_2$ does not exhibit singularities in the integration domain, making it comparatively easier to compute than $I_1$. However, since we seek rigorous enclosures rather than approximations for $I_2$, conventional numerical methods are insufficient. We therefore employ the kv library \cite{kashiwagi2024kv}, which is based on power series arithmetic and interval arithmetic, to accomplish this. The same approach applies to integrals over triangles that do not have $\intpt$ as a vertex.
We now focus on explaining the enclosure method for $I_1$. We first compute bounded closed intervals $[c_{i,j}]=[\underline{c}_{i,j},\overline{c}_{i,j}]\subset\mathbb{R}$ ($0\leq i\leq m$, $0\leq j\leq n$) such that
\begin{equation}
   f(u,ku) \in \sum_{i=0}^{m}\sum_{j=0}^{n}[c_{i,j}]k^iu^j\quad\text{for all $(u,k)\in[0,a]\times[0,b/a]$.}
   \label{eq:psa}
\end{equation}
The intervals $[c_{i,j}]$ are computed using Power Series Arithmetic \cite{kashiwagi2019seido,tanaka2022rigorous}, incorporating interval arithmetic to account for all rounding errors. From equation \eqref{eq:psa}, we obtain the following enclosure for $I_1$:
\begin{equation*}
   I_1 \in \sum_{i=0}^{m}\frac{(b/a)^{i+1}}{i+1}\sum_{j=0}^{n}[c_{i,j}]\int_0^au^{j+1}\log(u)\,du.
\end{equation*}
The integral on the right-hand side can be evaluated using the formula
\begin{equation}
   \int_0^au^{j+1}\log(u)\,du = \frac{a^{j+2}((j+2)\log(a)-1)}{(j+2)^2}.
\end{equation}
Based on this formula, we can obtain rigorous bounds for $I$ through sequential calculations using interval arithmetic, considering all numerical computation errors.
\begin{figure}[tbp]
    \centering
    \begin{tikzpicture}[scale=0.475]
        \coordinate (A) at (-4,-3);
        \coordinate (B) at (-4,7);
        \coordinate (C) at (7,7);
        \coordinate (D) at (7,-3);
        \fill [lightgray] (0,0)--(7,0)--(7,7)--cycle;
        \draw (A)--(B)--(C)--(D)--cycle;
        \draw [dashed] (A)--(0,0);
        \draw [dashed] (B)--(0,0);
        \draw [dashed] (C)--(0,0);
        \draw [dashed] (D)--(0,0);
        \draw [dashed] (-4,0)--(7,0);
        \draw [dashed] (0,-3)--(0,7);
        \node [above left] at (0,-0.2) {$\intpt$};
        \node [below] at (3.5,0) {$a$};
        \node [left] at (7,3.5) {$b$};
    \end{tikzpicture}
    \caption{Triangulation of a rectangular domain. The gray triangle illustrates a subdomain where the singular integral is evaluated.}
    \label{fig:triangulation}
\end{figure}
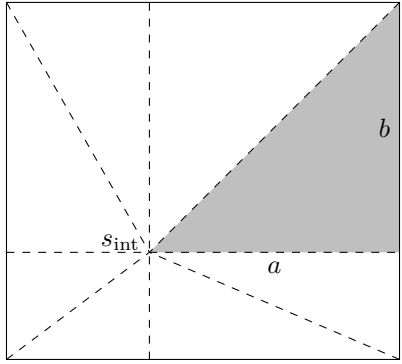
The numerical method described above assumes a fixed $\intpt$. This procedure is sufficient for computing pointwise upper and lower bounds of $u$ using Corollary \ref{cor:enclosure}. 
However, applying Corollary \ref{cor:enclosure_comparison} requires evaluation of the integral $\int_{\Omega}f\cdot\phi_{\intpt}\,dx$ for all elements of the families $\{\overline{\phi_{\intpt}}\}_{\intpt \in \Omega}$ and $\{\underline{\phi_{\intpt}}\}_{\intpt \in \Omega}$ that constitute our super- and sub-solutions (where we use $\phi_{\intpt}$ to denote either $\overline{\phi_{\intpt}}$ or $\underline{\phi_{\intpt}}$ in the integral). This requirement entails handling integrals with $\intpt$ across the entire domain, which is not pursued in this paper but can be addressed using the approach outlined below.
In principle, obtain solution enclosures over the whole domain $\Omega$ using Corollary \ref{cor:enclosure_comparison} by the following procedure.
\begin{enumerate}
\item Partitioning $\Omega$ into small triangular or rectangular meshes
\item Within each mesh element:
   \begin{itemize}
   \item Fixing all parameters except $\intpt$ (including exterior source points and their coefficients);
   \item Treating $\intpt$ as an interval variable.
   \item Evaluating the integral $I$ using the method described above
   \end{itemize}
\end{enumerate}
The techniques described above provide a complete framework for rigorous integral evaluation. In this paper, we limit our numerical experiments to pointwise estimates, for which these methods are directly applicable and computationally tractable. Although the construction of global solution bands (Corollary \ref{cor:enclosure_comparison}) is excluded from the present numerical results due to the complexity associated with varying $\intpt$ continuously, the established pointwise capability itself represents a significant advancement in verified computing for singular problems. Extending the implementation to global bands constitutes a natural next step in the development of our methodology.

\section{Conclusion and Future Work}\label{sec:conclusion}
In this paper, we propose a novel framework for super- and sub-solutions using test functions based on fundamental solutions. By introducing the concept of ``Green-representable solutions'' and elucidating their theoretical properties, we formulated super- and sub-solutions for a broader class of functions, including piecewise linear functions, which are difficult to handle using conventional definitions based on variational inequalities. This framework has the advantage of naturally yielding rigorous pointwise estimates even for solutions that do not possess $H^2$ regularity, a setting that often presents challenges in conventional finite element analysis. Through numerical experiments, we demonstrate that the proposed method provides rigorous and high-precision enclosures for solutions to problems with constant and discontinuous source terms in one-dimensional and two-dimensional polygonal domains (including non-convex domains).
A secondary but important contribution of this study is the identification of a new connection between verified computing and the MFS. In our framework, the sharpness of the solution enclosure depends on the behavior of the test function on the boundary, that is, how accurately the test function can approximate the boundary conditions. Therefore, the quality of the obtained error estimate also serves as an indirect and quantitative indicator of the approximation performance of the MFS. This observation implies that improvements in the approximation accuracy of the MFS directly translate into improved accuracy in rigorous solution verification.

Future work includes the following points.
First, the construction of global solution enclosures in domains of two or more dimensions. In this paper, the numerical experiments for the 2D case focused on pointwise estimates at fixed evaluation points $\intpt$. To construct a band of super- and sub-solutions covering the entire domain, as achieved in the 1D case, it is necessary to control the parameters of the test functions continuously or densely in response to changes in the evaluation point $\intpt$. This task involves challenges such as the rigorous evaluation of parameter interpolation errors and the reduction of computational cost, but it represents an essential step toward enhancing the practical applicability of the proposed method.
Second, the optimization of test function construction. In this study, we relied on existing heuristics and standard linear solvers for the placement of MFS source points and the determination of coefficients. However, there is considerable scope for optimizing the parameters of the test functions (especially the positions of external source points) with respect to the objective of minimizing boundary values while satisfying the prescribed sign conditions on the boundary ($\phi_{\intpt} \ge 0$ or $\le 0$). For example, exploring more effective source point configurations and coefficient selection strategies using gradient-based optimization or machine learning approaches (such as unsupervised learning using optimizers like Adam) could substantially improve the sharpness of the resulting enclosures.
Third, the extension to more general equations. Although the present framework relies heavily on the properties of fundamental solutions, similar ideas may be applicable to other linear elliptic equations for which fundamental solutions are available (e.g., the Helmholtz equation or advection-diffusion equations).
Addressing these directions is expected to further expand both the theoretical scope and practical applicability of verified computing frameworks based on fundamental solutions and to contribute to the development of highly reliable numerical simulation technologies.

\section*{Acknowledgments}
This work was supported by the JST FOREST Program (Grant Number JPMJFR202S, Japan).
This work was also supported in part by participation in the JST Open Problems Workshop in Mathematical Sciences 2023. 
We thank Koya Sakakibara at Kanazawa University for his valuable advice. We would like to thank Editage (www.editage.jp) for English language editing. 

\bibliographystyle{plain}
\bibliography{ref}

\appendix
\section{Technical Lemma for Trace Estimates} \label{appen:proof_of_lemma_3_5}
\begin{lemma}\label{lem:trace_value_general_n_dim}
Let $N \geq 2$ be an integer and $1\leq p,q\leq\infty$ be fixed. Let $0<\lambda<1$ and let $\Omega$ be a bounded Lipschitz domain in $\mathbb{R}^N$. Define the scaled domain as $\Omega_{\lambda} := \{\lambda x : x \in \Omega\}$. Assume that there exists a constant $C(\Omega,p,q)>0$ such that
\[
\left\|u\right\|_{L^p(\partial \Omega)} \leq C(\Omega,p,q)\|u\|_{W^{1,q}(\Omega)}.
\]
Then, for any $v\in W^{1,q}(\Omega_{\lambda})$, we have
\begin{equation}
\left\|v\right\|_{L^{p}(\partial\Omega_{\lambda})}\leq C(\Omega,p,q)\lambda^{\frac{N-1}{p}  - \frac{N}{q}}\left\|v\right\|_{W^{1,q}(\Omega_{\lambda})},
\end{equation}
where we interpret $\lambda^{\frac{N-1}{p} - \frac{N}{q}}=\lambda^{-\frac{N}{q}}$ when $p=\infty$, $\lambda^{\frac{N-1}{p} - \frac{N}{q}}=\lambda^{\frac{N-1}{p}}$ when $q=\infty$, and $\lambda^{\frac{N-1}{p} - \frac{N}{q}}=1$ when $p=q=\infty$.
\end{lemma}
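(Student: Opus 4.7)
The plan is to reduce the inequality on $\Omega_\lambda$ to the assumed inequality on $\Omega$ by the pullback $u(x) := v(\lambda x)$ for $x \in \Omega$, and then track how each norm rescales under this substitution. No deep analysis is required; the argument is purely a dimensional/scaling computation, with the only mild subtlety being that the $L^q$ and gradient parts of the $W^{1,q}$ norm scale differently, so one has to pick the ``worse'' exponent uniformly.

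First I would record the three elementary scaling identities, proved by the change of variables $y = \lambda x$ (which sends $\partial\Omega \to \partial\Omega_\lambda$ with surface Jacobian $\lambda^{N-1}$, and $\Omega \to \Omega_\lambda$ with volume Jacobian $\lambda^{N}$). For finite exponents these read
\begin{equation*}
 \|u\|_{L^p(\partial\Omega)} = \lambda^{-(N-1)/p}\|v\|_{L^p(\partial\Omega_\lambda)},\qquad
 \|u\|_{L^q(\Omega)} = \lambda^{-N/q}\|v\|_{L^q(\Omega_\lambda)},
\end{equation*}
while the chain rule $\nabla u(x) = \lambda\,\nabla v(\lambda x)$ gives
\begin{equation*}
 \|\nabla u\|_{L^q(\Omega)} = \lambda^{\,1-N/q}\|\nabla v\|_{L^q(\Omega_\lambda)}.
\end{equation*}
The cases $p=\infty$ or $q=\infty$ are handled by taking essential suprema, which are invariant under the change of variables; in those cases the exponents reduce to the conventions listed in the statement.

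Next I would combine the $L^q$ and gradient scalings to estimate $\|u\|_{W^{1,q}(\Omega)}$. Since $0<\lambda<1$ and $q\geq 1$ gives $\lambda^{1-N/q}\leq \lambda^{-N/q}$, both constituent norms of $u$ are dominated by $\lambda^{-N/q}$ times the corresponding norms of $v$, hence
\begin{equation*}
 \|u\|_{W^{1,q}(\Omega)} \leq \lambda^{-N/q}\,\|v\|_{W^{1,q}(\Omega_\lambda)},
\end{equation*}
with the usual modification when $q=\infty$. This step is where the hypothesis $\lambda<1$ is essential; for $\lambda>1$ the opposite exponent would dominate and the uniform bound would fail, which is precisely why the statement is restricted to $0<\lambda<1$.

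Finally I would apply the assumed trace inequality to $u \in W^{1,q}(\Omega)$, obtaining
\begin{equation*}
 \lambda^{-(N-1)/p}\,\|v\|_{L^p(\partial\Omega_\lambda)}
 = \|u\|_{L^p(\partial\Omega)}
 \leq C(\Omega,p,q)\,\|u\|_{W^{1,q}(\Omega)}
 \leq C(\Omega,p,q)\,\lambda^{-N/q}\,\|v\|_{W^{1,q}(\Omega_\lambda)},
\end{equation*}
and rearranging yields the claimed inequality with the factor $\lambda^{(N-1)/p - N/q}$. The boundary cases $p=\infty$ and/or $q=\infty$ follow from the same chain with the exponent conventions stated in the lemma. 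The only potential obstacle is a careful bookkeeping of the exponents across the finite and infinite cases, which is routine.
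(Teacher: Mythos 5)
Your proof is correct and follows essentially the same route as the paper's: the pullback $u(x)=v(\lambda x)$, the surface/volume/gradient scaling identities, the use of $0<\lambda<1$ to dominate both parts of the $W^{1,q}$ norm by $\lambda^{-N/q}$, and the final application of the assumed trace inequality. The only cosmetic remark is that for $q=\infty$ the gradient supremum is not literally invariant but scales by $\lambda\le 1$, which you implicitly use via the "usual modification," exactly as the paper does.
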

\begin{proof}
Let $y = \lambda x$ and define $u(x) := v(\lambda x)$ for $x \in \Omega$.

First, we consider the scaling of the function value in $L^p$ norm. For $1\leq p<\infty$, using the change of variables (noting that the boundary measure scales as $dS_y = \lambda^{N-1}dS_x$), we have:
\[
\begin{aligned}
\left\|v\right\|_{L^p(\partial \Omega_\lambda)}^p
&= \int_{\partial\Omega_\lambda} \left|v(y)\right|^p\,dS_y \\
&= \int_{\partial\Omega} \left|v(\lambda x)\right|^p\,\lambda^{N-1}\,dS_x \\
&= \int_{\partial\Omega} \left|u(x)\right|^p\,\lambda^{N-1}\,dS_x \\
&= \lambda^{N-1}\left\|u\right\|_{L^p(\partial \Omega)}^p.
\end{aligned}
\]
Therefore,
\[
\left\|v\right\|_{L^p(\partial \Omega_\lambda)} = \lambda^{(N-1)/p} \left\|u\right\|_{L^p(\partial \Omega)}.
\]

For $p=\infty$, the boundary measure scaling does not affect the supremum norm:
\[
\left\|v\right\|_{L^\infty(\partial \Omega_\lambda)} = \left\|u\right\|_{L^\infty(\partial \Omega)}.
\]

Now, for the $W^{1,q}$ norm when $1\leq q<\infty$, we have:
\[
\begin{aligned}
\|v\|_{W^{1,q}(\Omega_\lambda)}^q &= \lambda^N \|u\|_{L^q(\Omega)}^q + \lambda^{N-q} \|\nabla u\|_{L^q(\Omega)}^q.
\end{aligned}
\]
Since $0 < \lambda < 1$ and $q \geq 1$, we have $\lambda^{N-q} \geq \lambda^N$, and thus
\[
\|v\|_{W^{1,q}(\Omega_\lambda)}^q \geq \lambda^N (\|u\|_{L^q(\Omega)}^q + \|\nabla u\|_{L^q(\Omega)}^q) = \lambda^N \|u\|_{W^{1,q}(\Omega)}^q.
\]
Taking the $q$-th root,
\[
\|v\|_{W^{1,q}(\Omega_\lambda)} \geq \lambda^{\frac{N}{q}}\|u\|_{W^{1,q}(\Omega)},
\]
which gives the upper bound we need:
\[
\|u\|_{W^{1,q}(\Omega)} \leq \lambda^{-\frac{N}{q}}\|v\|_{W^{1,q}(\Omega_\lambda)}.
\]
Combining these inequalities with the assumption yields
\[
\begin{aligned}
\left\|v\right\|_{L^p(\partial \Omega_\lambda)} &= \lambda^{\frac{N-1}{p}} \left\|u\right\|_{L^p(\partial \Omega)} \\
&\leq \lambda^{\frac{N-1}{p}} C(\Omega,p,q)\|u\|_{W^{1,q}(\Omega)} \\
&\leq C(\Omega,p,q)\lambda^{\frac{N-1}{p} -\frac{N}{q}}\|v\|_{W^{1,q}(\Omega_\lambda)}.
\end{aligned}
\]

For $q=\infty$, we have $\|u\|_{L^\infty(\Omega)} = \|v\|_{L^\infty(\Omega_\lambda)}$ and $\|\nabla u\|_{L^\infty(\Omega)} = \lambda \|\nabla v\|_{L^\infty(\Omega_\lambda)} \leq \|\nabla v\|_{L^\infty(\Omega_\lambda)}$, so
\[
\|u\|_{W^{1,\infty}(\Omega)} = \max\{\|u\|_{L^\infty(\Omega)}, \|\nabla u\|_{L^\infty(\Omega)}\} \leq \max\{\|v\|_{L^\infty(\Omega_\lambda)}, \|\nabla v\|_{L^\infty(\Omega_\lambda)}\} = \|v\|_{W^{1,\infty}(\Omega_\lambda)}.
\]
Combined with the trace scaling, this yields
\[
\left\|v\right\|_{L^p(\partial \Omega_\lambda)} \leq \lambda^{\frac{N-1}{p}} C(\Omega,p,\infty)\|v\|_{W^{1,\infty}(\Omega_\lambda)}.
\]
\end{proof}

\end{document}